\numberwithin{figure}{section}
\numberwithin{table}{section}
\newtheorem{theorem}{Theorem}[section]
\newtheorem{corollary}[theorem]{Corollary}
\newtheorem{lemma}[theorem]{Lemma}
\newtheorem{formula}[theorem]{Formula}
\newtheorem{mytable}[theorem]{Table}
\newtheorem{proposition}[theorem]{Proposition}
\numberwithin{equation}{section}
\newcommand{\lrabs}[1]{\left\lvert #1 \right\lvert}
\newcommand{\lrp}[1]{\left(#1\right)}
\newcommand{\lrb}[1]{\left[#1\right]}
\newcommand{\lrfloor}[1]{\left\lfloor #1 \right\rfloor}
\newcommand{\condf}{{\mathfrak{f}}}
\newcommand{\midmid}{\parallel}
\newcommand{\pttmatrix}[4]{
\left(
\begin{smallmatrix}
  #1 & #2 \\
  #3 & #4
\end{smallmatrix}
\right)
}
\newcommand{\nw}{{\textnormal{new}}}
\setlist[enumerate]{leftmargin=*,widest=0}
\setlist[itemize]{leftmargin=*,widest=0}
\def\subsection{\@startsection{subsection}{2}%
  \z@{.5\linespacing\@plus.7\linespacing}{.3\linespacing}%
  {\normalfont\bfseries}
}
\begin{document}

\title{Newspaces with Nebentypus: An Explicit Dimension Formula and Classification of Trivial Newspaces}

\subjclass[2020]{Primary 11F11; Secondary 11F06.}
\keywords{Modular forms; Newspace; Nebentypus; Atkin–Lehner decomposition; Dimension Formula}

\author[E. Ross]{Erick Ross}
\address[E. Ross]{School of Mathematical and Statistical Sciences, Clemson University, Clemson, SC, 29634}
\email{erickr@clemson.edu}

\begin{abstract}
    Consider $N \geq 1$, $k \geq 2$, and $\chi$ a Dirichlet character modulo $N$ such that $\chi(-1) = (-1)^k$. For any bound $B$, one can show that $\dim S_k(\Gamma_0(N),\chi) \le B$ for only finitely many triples $(N,k,\chi)$. 
    It turns out that this property does not extend to the newspace; there exists an infinite family of triples $(N,k,\chi)$ for which $\dim S_k^{\text{new}}(\Gamma_0(N),\chi) = 0$. 
    However, we classify this case entirely. We also show that excluding the infinite family for which $\dim S_k^{\text{new}}(\Gamma_0(N),\chi) = 0$, $\dim S_k^{\text{new}}(\Gamma_0(N),\chi) \leq B$ for only finitely many triples $(N,k,\chi)$. In order to show these results, we derive an explicit dimension formula for the newspace $S_k^{\text{new}}(\Gamma_0(N),\chi)$. 
    We also use this explicit dimension formula to prove a character equidistribution property and disprove a conjecture from Greg Martin that $\dim S_2^{\text{new}}(\Gamma_0(N))$ takes on all possible non-negative integers.
\end{abstract}

\maketitle

\section{Introduction} \label{sec:intro}
Let $N\geq1$, $k\geq2$, and $\chi$ be a Dirichlet character modulo $N$ such that $\chi(-1) = (-1)^k$. The space of cuspforms of level $N$, weight $k$, and nebentypus $\chi$ is denoted by $S_k(\Gamma_0(N),\chi)$ \cite[Section 7.2]{cohen-stromberg}. When the character is trivial, we will just write $S_k(\Gamma_0(N))$. Now, an explicit formula for the dimension of these spaces has been computed; for example, Formula \ref{form:fullspace-dim} below from \cite{cohen-stromberg}. After some analysis on the terms in Formula \ref{form:fullspace-dim}, one can see that $\dim S_k(\Gamma_0(N),\chi)$ has a main term which grows as a function of $N$, and as a function of $k$.
This means that for any fixed $N$ and $\chi$, one could compute a complete list of $k$ for which $\dim S_k(\Gamma_0(N),\chi)$ is small. For example, in the case of trivial character, this would be straightforward from \cite[Table 7.3]{cohen-stromberg}. Similarly, for any fixed $k$, one can compute all the $N,\chi$ for which $S_k(\Gamma_0(N),\chi)$ is small; this could be done using the techniques of \cite[Proposition 26]{martin}, for example.

In this paper, we give a slightly more general result.
\begin{theorem} \label{thm:fullspace-dim}
    Consider $N \geq 1$, $k \geq 2$, and $\chi$ a Dirichlet character modulo $N$ such that $\chi(-1) = (-1)^k$. Then for all bounds $B \geq 0$, $\dim S_k(\Gamma_0(N), \chi) \leq B$ for only finitely many triples $(N,k,\chi)$.
\end{theorem}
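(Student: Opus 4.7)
The plan is to invoke the explicit dimension formula (Formula \ref{form:fullspace-dim}) and show that its main term grows without bound in $(N,k)$ while all remaining contributions are controlled by slower-growing functions. Writing the formula in the usual shape
$$\dim S_k(\Gamma_0(N),\chi) = \frac{k-1}{12}\,\psi(N) + E(N,k,\chi),$$
where $\psi(N) = N\prod_{p\mid N}(1+1/p)$ and $E$ collects the elliptic-fixed-point, cusp, and (when $k=2$) constant contributions, the task reduces to bounding $|E(N,k,\chi)|$ by a function that is $o(k\,\psi(N))$ uniformly in $\chi$.

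First I would read off from the formula that each elliptic contribution is a sum of at most $O(2^{\omega(N)})$ terms of absolute value $O(1)$, and that the cusp contribution is a divisor sum bounded by $O(\sigma_0(N))$, both uniformly in $\chi$ since $|\chi(x)|\le 1$ for every $x$. This yields a crude but uniform bound $|E(N,k,\chi)|\ll k\cdot N^{\epsilon}$ for any fixed $\epsilon>0$. Since $\psi(N)\ge N$, combining with the main term gives a lower bound of the form
$$\dim S_k(\Gamma_0(N),\chi) \;\ge\; \frac{(k-1)N}{12} - C_\epsilon\, k\, N^{\epsilon},$$
for a constant $C_\epsilon$ depending only on $\epsilon$.

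From this lower bound, the hypothesis $\dim S_k(\Gamma_0(N),\chi)\le B$ forces the product $kN$ to lie in a bounded range. For each such pair $(N,k)$, there are at most $\varphi(N)$ candidate characters $\chi$ modulo $N$, so only finitely many admissible triples $(N,k,\chi)$ satisfy the inequality.

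The main obstacle is arranging the error estimates to be honestly uniform across all three regimes at once, namely small $N$ with large $k$ (where the factor $k-1$ in the main term does the work), large $N$ with $k=2$ (where the growth $\psi(N)\ge N$ dominates), and intermediate cases. In particular, the weight-$2$ correction term must be shown not to erase the main term; since this correction is itself a divisor sum on $N$, however, bounding it is a routine rather than conceptual matter.
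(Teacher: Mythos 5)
Your overall strategy is the same as the paper's: isolate the main term $\tfrac{k-1}{12}\psi(N)$ of Formula \ref{form:fullspace-dim}, bound the remaining contributions uniformly in $\chi$, and conclude that $\dim S_k(\Gamma_0(N),\chi)\le B$ forces $N$ and then $k$ into a bounded range, with only $\varphi(N)$ characters per level. The treatment of the elliptic terms (at most $2^{\omega(N)}=O(N^\epsilon)$ summands, each of modulus at most $1$, with bounded coefficients $c_3(k),c_4(k)$) matches Lemma \ref{lem:congruence-solns} and is fine.

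There is, however, one step that is wrong as stated: the claim that the cusp contribution is ``a divisor sum bounded by $O(\sigma_0(N))$,'' and the resulting uniform estimate $|E(N,k,\chi)|\ll k\,N^{\epsilon}$. The cusp term is
\begin{equation}
\frac12 \sum_{\substack{d\mid N \\ \gcd(d,N/d)\mid N/f}} \phi(\gcd(d,N/d)),
\end{equation}
and this is \emph{not} $O(N^{\epsilon})$: already for $N=m^2$ the single divisor $d=m$ contributes $\phi(m)\gg \sqrt{N}/\log\log N$. The correct bound, which the paper records as Lemma \ref{lem:phi-sum}, is $2^{\omega(N)}\sqrt{N}=O(N^{1/2+\epsilon})$. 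Consequently your displayed lower bound $\dim S_k(\Gamma_0(N),\chi)\ge \frac{(k-1)N}{12}-C_\epsilon k N^{\epsilon}$ is false (take $k=2$ and $N$ a large square). The good news is that the error is repairable without changing the architecture: replacing $N^{\epsilon}$ by $N^{1/2+\epsilon}$ in the error bound still gives $E=o(\psi(N))$ since $\psi(N)\ge N$, so the main term dominates for large $N$, and the finiteness conclusion goes through exactly as you describe. You should state the $\sqrt{N}$-type bound for the cusp sum explicitly (or cite it), since it is the one term whose size is not immediately $O(N^\epsilon)$ and is precisely the point you dismiss as ``routine.''
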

This result is not particularly surprising, but we prove it here for lack of reference in the literature. We also give the complete list of triples $(N,k,\chi)$ for which $\dim S_k(\Gamma_0(N), \chi) \leq 2$.

A natural question one might then ask would be if this same result holds on the newspace: for any bound $B$, will $\dim S_k^\nw(\Gamma_0(N),\chi)$ be $\leq B$ for only finitely many triples $(N,k,\chi)$? It turns out this is not true; there exists an infinite family of $(N,k,\chi)$ for which $\dim S_k^\nw(\Gamma_0(N),\chi) = 0$. However, we classify this case entirely; the following theorem completely settles the problem raised in \cite[Theorem 6.1]{shemanske-treneer-walling}, which showed that for a certain class of $N$, $k$, and $\chi$, $S_k^\nw(\Gamma_0(N),\chi)$ is trivial in only finitely many cases.
\begin{theorem} \label{thm:newspace-trivial}
    Consider $N \geq 1$, $k \geq 2$, and $\chi$ a Dirichlet character modulo $N$ of conductor $f=\condf(\chi)$ with $\chi(-1) = (-1)^k$. Then $\dim S_k^\nw(\Gamma_0(N),\chi) = 0$ precisely for the triples $(N,k,\chi)$ such that $2 \mid f$ and $2 \midmid N/f$, together with the finite list given in Table \ref{table:newspace-dim0}.
\end{theorem}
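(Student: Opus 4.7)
The plan is to derive an explicit formula for $\dim S_k^\nw(\Gamma_0(N), \chi)$ by Möbius-inverting the Atkin--Lehner--Li decomposition, then analyze when it vanishes. Starting from the standard identity
\[
\dim S_k(\Gamma_0(N),\chi) = \sum_{f \mid M \mid N} \sigma_0(N/M)\, \dim S_k^\nw(\Gamma_0(M),\chi),
\]
Möbius inversion against the Dirichlet inverse $\beta$ of $\sigma_0$ (so that $\beta(1)=1$, $\beta(p)=-2$, $\beta(p^2)=1$, and $\beta(p^e)=0$ for $e\geq 3$) gives
\[
\dim S_k^\nw(\Gamma_0(N),\chi) = \sum_{f \mid M \mid N} \beta(N/M)\, \dim S_k(\Gamma_0(M),\chi).
\]
Substituting Formula~\ref{form:fullspace-dim} and using that each constituent contribution (parabolic main term, the two elliptic terms, and the $k=2$ cusp correction) is multiplicative in its prime components via CRT, the right-hand side collapses to a sum of products of local factors $F_p^{\bullet}(e_p, \ell_p)$, where $e_p := v_p(f)$ and $\ell_p := v_p(N/f)$.

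For the infinite family (note that $2 \mid f$ forces $\chi$ nontrivial, so the $k=2$ cusp correction is already absent), I would compute the parabolic local factor using $\psi(p^a) = p^{a-1}(p+1)$: when $\ell_p = 1$ and $e_p \geq 1$,
\[
F_p^{\mathrm{par}}(e_p, 1) = -2\psi(p^{e_p}) + \psi(p^{e_p+1}) = p^{e_p-1}(p+1)(p-2),
\]
which vanishes exactly at $p = 2$. For the two elliptic terms, the local number of solutions to $x^2 + 1 \equiv 0$ and $x^2 + x + 1 \equiv 0$ modulo $2^{v_2(N)}$ is already zero whenever $v_2(N) \geq 2$; since $v_2(N) = e_2 + 1 \geq 2$ throughout the claimed family, both elliptic contributions vanish identically. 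Hence $\dim S_k^\nw(\Gamma_0(N),\chi) = 0$ whenever $2 \mid f$ and $2 \midmid N/f$.

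For the converse, outside the exceptional family each local factor $F_p^{\mathrm{par}}$ admits an explicit positive lower bound (for example $F_p^{\mathrm{par}}(e_p, \ell_p) = p^{e_p + \ell_p - 3}(p+1)(p-1)^2 > 0$ when $\ell_p \geq 2$ and $e_p + \ell_p \geq 3$), so the parabolic contribution grows at least polynomially in $N$ and linearly in $k-1$, while the elliptic and cusp corrections are uniformly of strictly smaller order. This forces $\dim S_k^\nw(\Gamma_0(N),\chi) \to \infty$ outside the family, reducing the remaining zero-dimensional triples to an explicit finite search that produces Table~\ref{table:newspace-dim0}. The main obstacle is establishing sharp enough uniform lower bounds on these local factors near the exceptional regime---especially at $p = 2$ with $\ell_2 \geq 2$, where $F_p^{\mathrm{par}}$ is comparatively small---so that the final enumeration is genuinely effective.
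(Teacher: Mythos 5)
Your overall strategy is the same as the paper's: M\"obius-invert the Atkin--Lehner--Li identity, localize the resulting partial convolution prime by prime, show every local factor at $p=2$ vanishes in the family $2 \mid f$, $2 \midmid N/f$, and reduce the remaining classification to an effective finite search. However, the vanishing argument as written has two concrete gaps. First, you enumerate only four contributions to Formula \ref{form:fullspace-dim} (the $\psi$ main term, the two elliptic character sums, and the $c_0(k,\chi)$ correction) and omit the fifth, namely the restricted cusp-count term $-\tfrac12\sum_{d \mid N,\ (d,N/d)\mid N/f}\phi(\gcd(d,N/d))$. Its local factor at $p=2$ with $\ell_2=1$ equals $-2\sigma(2^{e_2})+\sigma(2^{e_2+1})$, which is $0$ when $e_2\geq 2$ but equals $-1$ when $e_2=1$; it must be addressed. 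Second, your vanishing of the $x^2+1\equiv 0$ elliptic term is justified only by ``no solutions modulo $2^{v_2(N)}$ for $v_2(N)\geq 2$,'' but the convolution also involves the solution count modulo $2^{e_2}$ (the level-$f$ end of the range $f \mid M \mid N$), and $x^2+1\equiv 0 \pmod 2$ \emph{does} have a solution; so if $e_2=1$ the local factor would be $-2\cdot 1 + 0 = -2 \neq 0$. Both gaps are closed by one observation you never make: there is no primitive Dirichlet character modulo $2$, so $2 \mid f$ forces $4 \mid f$, i.e.\ $e_2 = v_2(f)\geq 2$. This is exactly the pivot of the paper's Proposition \ref{prop:infinite-family-trivial-newspace}, and without it the case $e_2=1$ is not vacuous in your argument.

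For the converse direction you correctly identify the shape of the bound ($F_p^{\mathrm{par}}$ positive and of order $p^{e_p+\ell_p}$ away from the excluded case, lower-order elliptic and cusp terms), but you defer the ``main obstacle'' of uniform effective bounds, and that deferral is where most of the remaining work lives: the paper's Theorem \ref{thm:newspace-small} packages the loss at small primes into the multiplicative function $\nu(N)$, proves the explicit estimates $2^{\omega(N)}\leq 4.862\,N^{1/4}$ and $\nu(N)\leq 21.234\,N^{1/16}$ (Lemma \ref{lem:omega-sigma0-pi-bounds}), and derives the threshold $N < 424094$ beyond which the dimension is positive, after which the finite computation produces Table \ref{table:newspace-dim0}. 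Until those bounds are supplied, the finite search is not actually effective and the classification is incomplete.
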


Excluding the infinite family given in Theorem \ref{thm:newspace-trivial}, we show that $\dim S_k^\nw(\Gamma_0(N),\chi) \leq B$ for only finitely many triples $(N,k,\chi)$.

\begin{theorem} \label{thm:newspace-small}
    Consider $N \geq 1$, $k \geq 2$, and $\chi$ a Dirichlet character modulo $N$ of conductor $f=\condf(\chi)$ with $\chi(-1) = (-1)^k$. Also assume it is not the case that $2 \mid f$ and $2 \midmid N/f$. Then for all bounds $B \geq 0$,  $\dim S_k^\nw(\Gamma_0(N),\chi) \leq B$ for only finitely many triples $(N,k,\chi)$.
\end{theorem}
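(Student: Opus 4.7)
The plan is to use the explicit newspace dimension formula established earlier in this paper to exhibit a dominant main term which factors as a product of local contributions, one per prime dividing $N$, and to show that outside the excluded family from Theorem \ref{thm:newspace-trivial} this main term grows without bound while the remaining corrections grow strictly more slowly. The starting point is the Atkin--Lehner decomposition together with Dirichlet convolution inversion: from
\[
\dim S_k(\Gamma_0(N),\chi) = \sum_{\substack{M \mid N \\ f \mid M}} \sigma_0(N/M)\, \dim S_k^\nw(\Gamma_0(M),\chi)
\]
one obtains
\[
\dim S_k^\nw(\Gamma_0(N),\chi) = \sum_{d \mid N/f} \beta(d)\, \dim S_k(\Gamma_0(N/d),\chi),
\]
where $\beta = \mu * \mu$ is multiplicative with $\beta(1) = 1$, $\beta(p) = -2$, $\beta(p^2) = 1$, and $\beta(p^e) = 0$ for $e \geq 3$.

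Substituting the leading term $\tfrac{k-1}{12}\psi(N/d)$ from Formula \ref{form:fullspace-dim} into each summand and using multiplicativity of $\psi$, the main contribution to $\dim S_k^\nw(\Gamma_0(N),\chi)$ factors as $\tfrac{k-1}{12}\prod_{p \mid N} \lambda_p$, where $\lambda_p$ depends only on $v_p(f)$ and $v_p(N)$. A prime-by-prime computation yields: for $p \nmid f$, $\lambda_p$ equals $p-1$, $p^2 - p - 1$, or $p^{v_p(N)-3}(p+1)(p-1)^2$ according to $v_p(N) = 1, 2, \geq 3$; for $p \mid f$ with $e := v_p(N/f)$, $\lambda_p$ equals $p^{v_p(f)-1}(p+1)$, $p^{v_p(f)-1}(p+1)(p-2)$, or $p^{v_p(N)-3}(p+1)(p-1)^2$ according to $e = 0, 1, \geq 2$. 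Every $\lambda_p$ is strictly positive \emph{except} in the single case $p = 2$, $p \mid f$, $e = 1$, where $\lambda_2 = 0$. This is precisely the excluded family, and outside it one has $\lambda_p \geq 1$ for every $p \mid N$, with $\lambda_p \gg p^{v_p(N)-c}$ for an absolute constant $c$, so $\prod_p \lambda_p \gg_\epsilon N^{1-\epsilon}$.

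The remaining elliptic-fixed-point and cuspidal terms of Formula \ref{form:fullspace-dim} are each $O_\epsilon(N^{1/2+\epsilon})$ uniformly in $k$ and $\chi$, and $\sum_{d \mid N/f}|\beta(d)| \ll_\epsilon N^\epsilon$, so the total correction after inversion is $O_\epsilon(N^{1/2+\epsilon})$. Combining,
\[
\dim S_k^\nw(\Gamma_0(N),\chi) \gg_\epsilon (k-1)\, N^{1-\epsilon} - O_\epsilon(N^{1/2+\epsilon}),
\]
which tends to infinity as $N + k \to \infty$; since each $(N,k)$ admits only finitely many eligible $\chi$, only finitely many triples can satisfy $\dim S_k^\nw(\Gamma_0(N),\chi) \leq B$. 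The hard part is the case analysis at $p = 2$ (and to a lesser extent $p = 3$): the sign pattern of $\beta$ and the character-dependent elliptic corrections could \emph{a priori} conspire to cancel the main term beyond the single infinite family identified above, and verifying via the explicit formula proved earlier that this does not happen is the crux of the argument.
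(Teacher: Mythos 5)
Your proposal is correct and follows essentially the same route as the paper: isolate the multiplicative main term $\frac{k-1}{12}\,\psi(f)\cdot\beta{*}\psi_f(N/f)$, check prime-by-prime that its local factors vanish exactly when $2 \mid f$ and $2 \midmid N/f$ and are otherwise $\gg_{\varepsilon} N^{1-\varepsilon}$, bound the remaining terms by $O_{\varepsilon}(N^{1/2+\varepsilon})$ uniformly in $k$ and $\chi$, and finish by first forcing $N$ large and then $k$ large; your local factors $\lambda_p$ agree with Formula \ref{form:beta-convolute-psif}. The only (harmless) difference is that you bound the correction terms by pulling absolute values through the partial convolution rather than computing $\beta{*}\sigma_f$, $\beta{*}\rho_f$, $\beta{*}\rho_f'$ explicitly as the paper does, which suffices for finiteness though not for the explicit constants the paper needs in its later computations.
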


We also give the complete list of triples $(N,k,\chi)$ for which $\dim S_k^\nw(\Gamma_0(N),\chi) \leq 1$.

In \cite[Conjecture 27]{martin}, Greg Martin conjectured that $\dim S_2^\nw(\Gamma_0(N))$ takes on all possible non-negative integers. It turns out this is not true; $67846$ is the first excluded value. In order to prove this, we take $B=67846$ in Theorem \ref{thm:newspace-small} and compute how large $N$ needs to be in order for $\dim S_2^\nw(\Gamma_0(N))$ to be $> 67846$. Then by computing the finitely many remaining cases by computer, we show that $\dim S_2^\nw(\Gamma_0(N)) = 67846$ for no value of $N$.

Now, in order to prove Theorems \ref{thm:newspace-trivial} and \ref{thm:newspace-small}, we need an explicit dimension formula for $S_k^\nw(\Gamma_0(N),\chi)$. Note that here (and throughout the entire paper), $f=\condf(\chi)$ denotes the conductor of $\chi$. Then it is already well-known that 
\begin{align} \label{eqn:newspace-dim-temp}
    \dim S_k^\nw(\Gamma_0(N),\chi) = \sum_{f \mid M \mid N} \beta(N/M) \dim S_k(\Gamma_0(M), \chi),
\end{align}
where $\beta$ is a certain multiplicative function that takes on positive and negative values \cite[Corollary 13.3.7]{cohen-stromberg}. 
However, a dimension formula of this form is useless for our purposes. We are interested in estimating the size of $\dim S_k^\nw(\Gamma_0(N),\chi)$. And this is not possible using the dimension formula in the above form; it is unclear when the positive and negative terms cancel out.

In the case of trivial character, \eqref{eqn:newspace-dim-temp} resembles a Dirichlet convolution. So by writing ${\dim S_k(\Gamma_0(N))}$ in terms of multiplicative functions, Greg Martin \cite{martin} was able to use the theory of Dirichlet convolutions to compute an explicit formula for $\dim S_k^\nw(\Gamma_0(N))$.
However, in the general character case, \eqref{eqn:newspace-dim-temp} is not a proper Dirichlet convolution; several of the terms are missing. So in this paper, we develop some machinery to deal with these ``partial Dirichlet convolutions". We are then able to use this machinery to compute an explicit dimension formula for $\dim S_k^\nw(\Gamma_0(N),\chi)$. 

\begin{theorem} \label{thm:explicit-dim-formula}
    Let $N\geq1$, $k\geq2$, and $\chi$ be a Dirichlet character modulo $N$ of conductor $f=\condf(\chi)$ with $\chi(-1) = (-1)^k$. Then 
    \begin{align} 
        \dim S_k^\nw(\Gamma_0(N),\chi) &= \frac{k-1}{12} \cdot \psi(f) \cdot \beta {*} \psi_f(N/f)   - c_3(k) \cdot \rho(f) \cdot \beta {*} \rho_f(N/f)  \\
        & \mspace{0mu} - c_4(k) \cdot \rho'(f) \cdot \beta {*} \rho_f'(N/f)  - \frac{1}{2} \cdot 2^{\omega(f)} \cdot \beta{*}\sigma_f(N/f) + c_0(k,\chi) \cdot \mu(N/f),
    \end{align}
    where each of these functions is defined below.
\end{theorem}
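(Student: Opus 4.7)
The plan is to substitute the classical Cohen--Stromberg explicit dimension formula for $\dim S_k(\Gamma_0(M),\chi)$ into the identity (1.1) and then algebraically repackage the resulting double sum. The Cohen--Stromberg formula splits $\dim S_k(\Gamma_0(M),\chi)$ into five additive pieces: a main term $\frac{k-1}{12}\psi(M)$, two elliptic correction terms proportional to $\rho(M)$ and $\rho'(M)$ respectively, a parabolic cusp term (which will give the $\sigma_f$ contribution), and a constant correction $c_0(k,\chi)$ that is only present in the weight-$2$, trivial-character case. In each piece the $\chi$-dependence factors through the conductor $f$ while the $M$-dependence is (essentially) a multiplicative function of $M$.

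The crucial reindexing is $M=fd$ with $d$ running over divisors of $N/f$; under this change of variables the condition $f\mid M\mid N$ becomes $d\mid N/f$, so the sum on the right of (1.1) becomes an honest Dirichlet convolution in the single variable $N/f$. For each multiplicative function $g\in\{\psi,\rho,\rho',\sigma\}$ appearing in Cohen--Stromberg, I would define a \emph{shifted} function $g_f(d):=g(fd)/g(f)$ (with appropriate conventions when $g(f)=0$), verify multiplicativity of $g_f$ in $d$, and compute its prime-power values. Each of the four non-constant terms then takes the shape $[f\text{-factor}]\cdot(\beta*g_f)(N/f)$, which matches the four convolution terms in the theorem statement. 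The constant piece $c_0(k,\chi)$ factors out of the sum entirely, leaving $\sum_{d\mid N/f}\beta((N/f)/d) = (\beta*\mathbf{1})(N/f)$, and a direct prime-power computation using $\beta(1)=1,\ \beta(p)=-2,\ \beta(p^2)=1,\ \beta(p^j)=0$ for $j\geq 3$ gives $\beta*\mathbf{1}=\mu$, producing the $c_0(k,\chi)\,\mu(N/f)$ term.

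The main obstacle is handling the elliptic shifts $\rho_f$ and $\rho_f'$. The corresponding terms in Cohen--Stromberg vanish when certain local conditions on $M$ fail (e.g.\ $4\mid M$ kills the $2$-elliptic contribution, $9\mid M$ the $3$-elliptic one) and their numerical values depend on the $2$-adic and $3$-adic components of $\chi$. Verifying multiplicativity of $\rho_f$ and $\rho_f'$ on divisors of $N/f$ and identifying them at each prime power therefore requires a careful prime-by-prime case analysis at $p\in\{2,3\}$, keyed on the $p$-adic valuation of $f$; a simpler analogous analysis for the cusp count extracts the $2^{\omega(f)}$ prefactor in the $\sigma_f$ term. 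This case analysis at the anomalous primes is the technical heart of what the introduction calls the \emph{partial Dirichlet convolution} machinery, since it is what allows one to convert the identity (1.1) --- which is not a convolution as a function of $N$ --- into genuine convolutions in the variable $N/f$.
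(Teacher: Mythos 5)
Your proposal follows essentially the same route as the paper: substitute the Cohen--Stromberg formula (rewritten as a linear combination of the multiplicative functions $\psi,\rho,\rho',\sigma$) into the classical relation $\dim S_k^\nw(\Gamma_0(N),\chi)=\sum_{f\mid M\mid N}\beta(N/M)\dim S_k(\Gamma_0(M),\chi)$, reindex $M=fL$ to turn each partial convolution into $\theta(f)\cdot\beta*\theta_f(N/f)$ with $\theta_f(L)=\theta(fL)/\theta(f)$, compute each $\beta*\theta_f$ on prime powers (with the anomalous-prime case analysis relegated to the explicit formulas), and identify $\beta*\mathbf{1}=\mu$ for the constant term. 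This is exactly the paper's argument, including the $\sigma(f)=2^{\omega(f)}$ prefactor and the convention for the vanishing cases $\rho(f)=0$.
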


Several explicit dimension formulas have recently been proven for various spaces of modular forms 
(e.g. in \cite{kimball}, \cite{choi2}, \cite{zhang-zhou}, \cite{kimball2023}, \cite{roy}, \cite{dalal}), and Theorem \ref{thm:explicit-dim-formula} gives a further result in this direction.

Now, our main motivation for computing this explicit dimension formula is to classify small newspaces. However, from this explicit formula, it is also straightforward to study how different characters affect the dimension of $S_k^\nw(\Gamma_0(N),\chi)$.

For comparison, first consider the full space $S_k(\Gamma_0(N),\chi)$. Then we have the following character equidistribution property.

\begin{theorem} \label{thm:fullspace-char-equid}
    Consider $N \geq 1$, $k \geq 2$, and $\chi$ a Dirichlet character modulo $N$ with $\chi(-1) = (-1)^k$.
    Recall that $S_k(\Gamma_1(N)) = \bigoplus_{\chi} S_k(\Gamma_0(N),\chi)$.
    Then asymptotically in $N$, the terms in this direct sum are equidistributed by character. 
\end{theorem}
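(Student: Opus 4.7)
The plan is to invoke the explicit dimension formula for $\dim S_k(\Gamma_0(N),\chi)$ (Formula \ref{form:fullspace-dim}) and exploit the fact that its main term $\frac{k-1}{12}\psi(N)$ is independent of $\chi$, while all character-dependent corrections are of strictly lower order in $N$. The precise meaning of ``equidistributed by character'' that I aim to establish is
\[
\dim S_k(\Gamma_0(N),\chi) \;=\; \frac{\dim S_k(\Gamma_1(N))}{\varphi(N)/2}\,\bigl(1+o(1)\bigr)
\]
as $N \to \infty$, uniformly over the characters $\chi$ modulo $N$ with $\chi(-1)=(-1)^k$.

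First, I would decompose $\dim S_k(\Gamma_0(N),\chi)$ via Formula \ref{form:fullspace-dim} into the main term $\frac{k-1}{12}\psi(N)$, the elliptic contributions given by character sums $\sum_x \chi(x)$ over solutions of $x^2+1\equiv 0$ and $x^2+x+1 \equiv 0 \pmod{N}$, the cusp contribution of the form $-\tfrac{1}{2}\sum_{d\mid N}\phi(\gcd(d, N/d))$ weighted by $\chi$-dependent indicators, and a bounded constant depending only on $k$. Then I would bound each error term uniformly in $\chi$: the elliptic character sums are $O(2^{\omega(N)}) = O(N^\epsilon)$ by the trivial bound on the number of roots, and the cusp contribution is bounded by the number of cusps of $\Gamma_0(N)$, which is $O(N^{1/2+\epsilon})$. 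This yields the uniform asymptotic
\[
\dim S_k(\Gamma_0(N),\chi) \;=\; \frac{k-1}{12}\,\psi(N) + O_k\lrp{N^{1/2+\epsilon}}.
\]

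Finally, since there are exactly $\varphi(N)/2$ characters modulo $N$ with $\chi(-1) = (-1)^k$ (for $N > 2$), summing yields
\[
\dim S_k(\Gamma_1(N)) \;=\; \frac{k-1}{12}\,\psi(N)\cdot\frac{\varphi(N)}{2} + O_k\lrp{\varphi(N)\cdot N^{1/2+\epsilon}}.
\]
As $\psi(N)\,\varphi(N) \gg N^{2-\epsilon}$, the error is dominated by the main term and the target equidistribution follows. The main obstacle is largely interpretive---pinning down what ``equidistributed by character'' precisely means in the statement, and verifying that the secondary terms in Formula \ref{form:fullspace-dim} can be bounded \emph{uniformly} in $\chi$ rather than merely pointwise. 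Once the uniform asymptotic is in hand, equidistribution is immediate from the $\chi$-independence of the main term.
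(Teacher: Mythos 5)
Your proposal is correct and follows essentially the same route as the paper: both isolate the $\chi$-independent main term $\frac{k-1}{12}\psi(N)$ of the dimension formula and bound the character-dependent elliptic and cusp terms uniformly by $O(N^{\varepsilon})$ and $O(N^{1/2+\varepsilon})$ respectively, and your precise formulation of equidistribution matches the one the paper adopts. The only cosmetic difference is that the paper works with the multiplicative-function rewriting of Lemma \ref{lem:dim-linear-comb-mult} rather than directly with Formula \ref{form:fullspace-dim}.
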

When we say that this direct sum is ``asymptotically equidistributed by character", we mean that each term in the direct sum has the same dimension, asymptotically (this is made precise in Section \ref{sec:char-equid}).

One might guess that a similar equidistribution property holds for the newspace $S_k^\nw(\Gamma_1(N)) = \bigoplus_{\chi} S_k^\nw(\Gamma_0(N),\chi)$. 
However, this turns out to not be true; $\dim S_k^\nw(\Gamma_0(N),\chi)$ depends heavily on the conductor of $\chi$.
But we are still able prove an equidistribution property if we only consider characters of a given conductor.

\begin{theorem} \label{thm:newspace-char-equid}
    Consider $N \geq 1$, $k \geq 2$, and $\chi$ a Dirichlet character modulo $N$ with $\chi(-1) = (-1)^k$.
    Recall that $S_k^\nw(\Gamma_1(N)) = \bigoplus_{\chi} S_k^\nw(\Gamma_0(N),\chi)$.
    Then asymptotically in $N$, the terms in this direct sum corresponding to characters of a given conductor are equidistributed by character. 
    Furthermore, if $k \equiv 1 \mod 12$, then for all $N$, the terms corresponding to characters of a given conductor are exactly equidistributed by character. 
\end{theorem}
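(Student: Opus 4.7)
The plan is to apply the explicit dimension formula of Theorem~\ref{thm:explicit-dim-formula}. The crucial structural observation is that, among the five summands on the right-hand side of that formula, the first four involve $\chi$ only through its conductor $f = \condf(\chi)$; the only genuinely character-dependent contribution is the final term $c_0(k,\chi)\cdot\mu(N/f)$. Consequently, for any two characters $\chi_1,\chi_2$ of the same conductor $f$ modulo $N$,
\begin{equation}
\dim S_k^\nw(\Gamma_0(N),\chi_1) - \dim S_k^\nw(\Gamma_0(N),\chi_2) = \bigl(c_0(k,\chi_1) - c_0(k,\chi_2)\bigr)\,\mu(N/f),
\end{equation}
which already vanishes automatically whenever $N/f$ fails to be squarefree. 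So the entire question reduces to controlling $c_0(k,\chi)$ across characters of a fixed conductor.

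For the asymptotic equidistribution statement, I would show that the main term $\tfrac{k-1}{12}\,\psi(f)\,\beta{*}\psi_f(N/f)$ dominates this character-dependent error as $N\to\infty$ through multiples of $f$. The constant $c_0(k,\chi)$ should be bounded uniformly in $\chi$ by a quantity depending only on $k$, since it is assembled from a finite collection of elliptic and cuspidal corrections weighted by bounded character sums. Meanwhile, an Euler-product analysis of $\beta{*}\psi_f$ shows that, outside the degenerate families classified in Theorem~\ref{thm:newspace-trivial}, this multiplicative function tends to infinity on multiples of $f$. Thus $\dim S_k^\nw(\Gamma_0(N),\chi_1)/\dim S_k^\nw(\Gamma_0(N),\chi_2)\to 1$ uniformly over characters of conductor $f$, which is the precise sense of ``asymptotically equidistributed'' I would adopt here.

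For the exact equidistribution when $k\equiv 1\pmod{12}$, the plan is to verify directly from the definition that the character-dependent error $c_0(k,\chi)\,\mu(N/f)$ vanishes identically, either because $c_0(k,\chi)\equiv 0$ or because $c_0(k,\chi)$ depends only on $f$ under this congruence. Since $c_0(k,\chi)$ packages the elliptic and cuspidal contributions whose coefficients depend on $k$ modulo $3$, $4$, and $12$, the hypothesis $k\equiv 1 \pmod{12}$ forces each of those coefficients to take a trivial value, and the claim should reduce to direct inspection of the formulas defining $c_0$. The main obstacle is likely twofold: establishing the uniform bound $|c_0(k,\chi)| = O_k(1)$ from the explicit definition (which requires dissecting each of the finitely many correction terms and bounding the associated character sums), and carefully verifying the unbounded growth of $\beta{*}\psi_f(N/f)$ while excluding the degenerate infinite family from Theorem~\ref{thm:newspace-trivial}, where the main term itself vanishes and the statement of equidistribution would be vacuous.
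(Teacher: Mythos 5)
There is a genuine gap: your ``crucial structural observation'' is false. In Theorem \ref{thm:explicit-dim-formula}, the terms that depend on $\chi$ beyond its conductor are the second and third ones, $c_3(k)\cdot\rho(f)\cdot\beta{*}\rho_f(N/f)$ and $c_4(k)\cdot\rho'(f)\cdot\beta{*}\rho_f'(N/f)$: by Formulas \ref{form:beta-conv-rho} and \ref{form:beta-conv-rhopm}, the values $\rho(p^r)$ and $\rho'(p^r)$ are determined by $\chi_{p^\alpha}\lrp{\frac{-1+u}{2}}$ and $\chi_{p^\alpha}(u')$, which genuinely vary among characters of the same conductor. By contrast, the term you single out, $c_0(k,\chi)\,\mu(N/f)$, is constant on each conductor class: $c_0$ is merely the indicator of ($k=2$ and $\chi$ trivial), and the trivial character is the unique character of conductor $1$, so within a fixed conductor $c_0$ does not vary at all. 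Your displayed identity is therefore wrong. Concretely, for $N=f=13$ and $k=2$, the conductor-$13$ characters with Conrey labels $3$ and $4$ give $\dim S_2^\nw(\Gamma_0(13),\chi)$ equal to $0$ and $1$ respectively (Tables \ref{table:newspace-dim0} and \ref{table:newspace-dim1}), while both have $c_0=0$, so the difference cannot equal $\lrp{c_0(k,\chi_1)-c_0(k,\chi_2)}\mu(N/f)=0$. Indeed, your identity would prove \emph{exact} equidistribution for all $k$ and $N$, which is false and would make the asymptotic half of the theorem pointless.

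The correct route (and the paper's) is the opposite attribution: the main term $\frac{k-1}{12}\psi(f)\cdot\beta{*}\psi_f(N/f)$, the term $\frac12\cdot 2^{\omega(f)}\cdot\beta{*}\sigma_f(N/f)$, and the $c_0$ term all depend only on the conductor, while the character-dependent $\rho$ and $\rho'$ terms are $O(N^\varepsilon)$ against a main term of size $\Omega(N^{1-\varepsilon})$; dividing by $s_f'(N,k)$, the $\chi$-dependence washes out as $N\to\infty$, giving the asymptotic claim. For the exact claim, $k\equiv 1\pmod{12}$ forces $c_3(k)=c_4(k)=0$ (and $k\neq2$, so $c_0=0$), which annihilates precisely the character-dependent terms, so all characters of a given conductor receive literally the same dimension. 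Your instinct that $k\equiv1\pmod{12}$ kills certain coefficients is right, but the coefficients in question are $c_3$ and $c_4$, not $c_0$, and it is the elliptic terms, not the $c_0$ term, that carry the character dependence.
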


The paper is organized as follows. In Section \ref{sec:dim-formula-fullspace}, we give a dimension formula for $S_k(\Gamma_0(N),\chi)$. Then after some analysis on the terms in this formula, we prove Theorem \ref{thm:fullspace-dim} and give the complete list of triples $(N,k,\chi)$ for which $\dim S_k(\Gamma_0(N),\chi) \leq 2$. 
Then in Section \ref{sec:dim-formula-newspace}, we turn our attention to the newspace. We state the classical formula for $S_k^\nw(\Gamma_0(N),\chi)$, and then rewrite the dimension formula for $S_k(\Gamma_0(N),\chi)$ as a linear combination of multiplicative functions.
Next in Section \ref{sec:partial-convolutions}, we  develop some machinery to calculate partial Dirichlet convolutions.
In Section \ref{sec:explicit-dim-formula}, we use this machinery to compute each of the partial convolutions appearing in \eqref{eqn:newspace-dim-temp}. This yields an explicit formula for $\dim S_k^\nw(\Gamma_0(N),\chi)$, proving Theorem \ref{thm:explicit-dim-formula}. Then in Section \ref{sec:when-dim-newspace-small}, we use the explicit dimension formula from Section \ref{sec:explicit-dim-formula} to prove Theorems \ref{thm:newspace-trivial} and \ref{thm:newspace-small}. We also give the complete list of triples $(N,k,\chi)$ (excluding the infinite family from Theorem \ref{thm:newspace-trivial}) for which $\dim S_k^\nw(\Gamma_0(N),\chi) \leq 1$. 
Then in Section \ref{sec:disproof-conj}, we use the bounds from Theorem \ref{thm:newspace-trivial} to disprove \cite[Conjecture 27]{martin}.
Next, in Section \ref{sec:char-equid}, we study character distribution and prove Theorems \ref{thm:fullspace-char-equid} and \ref{thm:newspace-char-equid}.
Finally, in Section \ref{sec:decomposition}, we discuss a motivation to better understand why the newspace is trivial when $2 \mid f$ and $2 \midmid N/f$. In particular, we show that in this case, $S_k(\Gamma_0(N),\chi)$ can be decomposed into old parts $ S_k(\Gamma_0(N),\chi) = S_k(\Gamma_0(N/2),\chi) \oplus S_k(\Gamma_0(N/2),\chi) |_k \pttmatrix{2}{0}{0}{1}$.

\section{The dimension of \texorpdfstring{$S_k(\Gamma_0(N),\chi)$}{S\_k(Gamma0(N),chi)} } \label{sec:dim-formula-fullspace}
We start by giving a dimension formula for $S_k(\Gamma_0(N), \chi)$.

\begin{formula}[{\cite[Theorem~7.4.1]{cohen-stromberg}}] \label{form:fullspace-dim}
    Let $N\geq1$, $k\geq2$, and $\chi$ be a Dirichlet character modulo $N$ of conductor $f=\condf(\chi)$ such that $\chi(-1) = (-1)^k$. Then 
    \begin{align}
        \dim S_k(\Gamma_0(N), \chi) \ =& \ \    
        \frac{k-1}{12} \psi(N) 
        \ - \  c_3(k) \mspace{-30mu}\sum_{
            \substack{x \mod N \\  x^2+x+1\equiv0 \mod N}
        }\mspace{-35mu} \chi(x) 
        \ - \ c_4(k) \mspace{-20mu}\sum_{
            \substack{x \mod N \\  x^2+1\equiv0 \mod N}
        }\mspace{-25mu} \chi(x) \\
        &- \frac12 \mspace{-20mu}\sum_{
            \substack{d \mid N \\  (d,N/d) \mid N / f}
        }\mspace{-20mu} \phi(\gcd(d,N/d))
        \ + \ c_0(k,\chi),
    \end{align}
    where
    \begin{align}
        \psi(N) &= N \prod_{p|N} \lrp{ 1 + \frac{1}{p} }, \label{eqn:psi}  \\
        c_3(k) &= \frac{k-1}{3} - \lrfloor{\frac{k}{3}},  \label{eqn:c3}  \\
        c_4(k) &= \frac{k-1}{4} - \lrfloor{\frac{k}{4}},  \label{eqn:c4}   \\
        c_0(k,\chi) &= 
        \begin{cases} 
            1 & \text{if $k=2$ and $\chi$ is trivial,} \\
            0 & \text{otherwise,}
        \end{cases} \label{eqn:delta} \\
        \phi \text{ den}&\text{otes the Euler totient function.} 
    \end{align}
\end{formula}

Next, we prove Lemma \ref{lem:congruence-solns}, which will be used to bound the second and third terms from the above formula. 
\begin{lemma} \label{lem:congruence-solns}
    Let $S(N)$ denote the set of solutions to the congruence $x^2 + x + 1 \equiv 0 \pmod{N}$, and  $S'(N)$ denote the set of solutions to the congruence $x^2 + 1 \equiv 0 \pmod{N}$. 
    Then 
    \begin{equation}
    \begin{alignedat}{4}
        S(3) &= \{1\}, && \\
        S'(2) &= \{1\}, && \\
        S(2^r) &= \varnothing && \text{for } r \geq 1, \\
        S(3^r),\ S'(2^r) &= \varnothing \qquad\qquad && \text{for } r \geq 2, \\
        \#S(N),\ \#S'(N) &\leq 2^{\omega(N)}. && 
    \end{alignedat}
    \end{equation}
    Here, $\omega(N)$ denotes the number of distinct prime divisors of $N$.
\end{lemma}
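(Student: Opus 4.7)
The plan is to handle the small explicit sets by direct verification, the three empty-set claims by elementary mod-$p$ or mod-$p^2$ arguments, and the final bound by combining the Chinese Remainder Theorem with Hensel's lemma applied at each prime.

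For $S(3) = \{1\}$ and $S'(2) = \{1\}$, I would simply enumerate residues. For $S(2^r) = \varnothing$ with $r \geq 1$, the identity $x^2 + x + 1 = x(x+1) + 1$ shows the value is always odd, so no solution exists modulo $2$ (and hence modulo any $2^r$). For $S'(2^r) = \varnothing$ with $r \geq 2$, it suffices to note that squares modulo $4$ lie in $\{0,1\}$, so $x^2 + 1 \not\equiv 0 \pmod 4$. For $S(3^r) = \varnothing$ with $r \geq 2$, the mod-$3$ reduction forces $x \equiv 1 \pmod 3$; substituting $x = 1 + 3y$ gives $x^2 + x + 1 = 3 + 9y + 9y^2 \equiv 3 \pmod 9$, so there is no solution modulo $9$, and therefore none modulo any higher power of $3$.

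For the uniform bound $\#S(N),\ \#S'(N) \leq 2^{\omega(N)}$, I would invoke the Chinese Remainder Theorem to write $\#S(N) = \prod_{p^r \| N} \#S(p^r)$ (and likewise for $S'$), and then show that each prime-power factor is at most $2$. The ``bad" primes $p = 2, 3$ for $S$ and $p = 2$ for $S'$ are already handled by the earlier parts of the lemma, contributing at most $1$. For the remaining odd primes, I would apply Hensel's lemma: the discriminants $-3$ and $-4$ of the two quadratics are nonzero modulo $p$, so the derivative $2x+1$ or $2x$ does not vanish at any root modulo $p$, and each of the at-most-$2$ roots over $\mathbb{F}_p$ lifts uniquely to a root modulo $p^r$.

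The main obstacle is just carefully sorting the edge cases at $p = 2$ and $p = 3$, where the Hensel argument breaks down; everything else is routine. I expect the proof to be short once these cases are dispatched.
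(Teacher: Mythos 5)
Your proposal is correct and follows essentially the same route as the paper: direct verification for the explicit sets, elementary congruence obstructions modulo $2$, $9$, and $4$ for the empty-set claims, and the Chinese Remainder Theorem combined with Hensel's lemma (using that the discriminants $-3$ and $-4$ are units at the relevant primes) for the bound $2^{\omega(N)}$. Your explicit substitution $x = 1+3y$ for the mod-$9$ case and the parity identity $x^2+x+1 = x(x+1)+1$ are minor stylistic variations on the same argument.
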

\begin{proof}
    The first and second claims follow immediately by direct computation. 

    The third and fourth claims follow immediately from the facts that the congruences $x^2 + x + 1 \equiv 0 \pmod{2}$, $x^2 + x + 1 \equiv 0 \pmod{9}$, and $x^2 + 1 \equiv 0 \pmod{4}$ have no solutions.
    
    For the fifth claim, note we already have $\#S(3^r) \leq 1$ for all $r \geq 1$. For primes $p \neq 3$, observe $\#S(p) \leq 2$ since $x^2 + x + 1$ is quadratic. Also, since $p$ does not divide the discriminant $D=-3$ of $x^2 + x + 1$, we have by Hensel's Lemma \cite[II.2.2]{lang} that the solutions of $x^2 + x + 1 \equiv 0 \pmod{p}$ lift uniquely to solutions of  $x^2 + x + 1 \equiv 0 \pmod{p^r}$. This yields  $\#S(p^r) = \#S(p) \leq 2$ for all $r \geq 1$.
    Then applying the Chinese Remainder Theorem to the fact that $\#S(p^r) \leq 2$ for all prime powers $p^r$, we obtain $\#S(N) \leq 2^{\omega(N)}$, as desired.

    An identical argument (with special case of $p=2$) shows that $\#S'(N) \leq 2^{\omega(N)}$ as well.
\end{proof}

Next, we state Lemma \ref{lem:phi-sum}, which will be used to bound the fourth term from Formula \ref{form:fullspace-dim}. We will also prove a more general version of this lemma later in Formula \ref{form:beta-conv-sigma}.
\begin{lemma}[{\cite[Definition~12\textquotesingle(B),~Equation~(7)]{martin}}] \label{lem:phi-sum}
    Let $N \geq 1$. Then 
    \begin{align}
        \sum_{d \mid N} \phi(\gcd(d,N/d)) \leq 2^{\omega(N)} \sqrt{N}.
    \end{align}
\end{lemma}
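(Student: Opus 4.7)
The plan is to prove the bound by exploiting multiplicativity: first show that $f(N) := \sum_{d \mid N} \phi(\gcd(d, N/d))$ is a multiplicative function of $N$, then evaluate $f$ at prime powers, and finally bound each local factor by $2 p^{a/2}$ so that the product yields $2^{\omega(N)} \sqrt{N}$.

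For the multiplicativity step, I would note that divisors of $N = \prod_p p^{a_p}$ correspond bijectively to tuples $(b_p)$ with $0 \le b_p \le a_p$, and under this correspondence
\[
\gcd(d, N/d) \;=\; \prod_{p \mid N} p^{\min(b_p,\, a_p - b_p)}.
\]
Since $\phi$ is multiplicative, $\phi(\gcd(d,N/d))$ factors accordingly, and the whole sum splits as $f(N) = \prod_{p \mid N} f(p^{a_p})$. This is the standard "local factors" reduction, so it is routine.

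Next I compute $f(p^a) = \sum_{b=0}^{a} \phi(p^{\min(b,\,a-b)})$. Splitting on the parity of $a$, the multiset $\{\min(b,a-b) : 0 \le b \le a\}$ consists of two copies of each of $0,1,\ldots,\lfloor (a-1)/2 \rfloor$ together with either one copy of $a/2$ (if $a$ even) or two copies of $(a-1)/2$ (if $a$ odd). Using the identity $\sum_{j=0}^{m} \phi(p^j) = p^m$, I get
\[
f(p^a) \;=\; \begin{cases} p^m + p^{m-1} & \text{if } a = 2m \text{ is even}, \\ 2 p^m & \text{if } a = 2m+1 \text{ is odd}. \end{cases}
\]

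Finally, each of these is $\le 2 p^{a/2}$: in the even case $p^m + p^{m-1} \le 2 p^m = 2 p^{a/2}$, and in the odd case $2 p^m \le 2 p^m \sqrt{p} = 2 p^{a/2}$. Multiplying over primes $p \mid N$ gives
\[
f(N) \;=\; \prod_{p \mid N} f(p^{a_p}) \;\le\; \prod_{p \mid N} 2 \, p^{a_p / 2} \;=\; 2^{\omega(N)} \sqrt{N},
\]
as claimed. There is no real obstacle here; the only mild bookkeeping is keeping the even/odd cases of $a$ straight when evaluating $f(p^a)$, and the bound $2 p^{a/2}$ is tight exactly so that the claimed $2^{\omega(N)} \sqrt{N}$ falls out cleanly.
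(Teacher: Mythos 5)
Your proof is correct, and it is essentially the argument the paper uses: the lemma itself is only cited from Martin, but the more general Formula \ref{form:beta-conv-sigma} proved in the appendix computes exactly your local factors in its $\alpha=0$ case, namely $\sigma(p^r)=2p^{(r-1)/2}$ for $r$ odd and $\bigl(1+\tfrac1p\bigr)p^{r/2}$ for $r$ even, via the same identity $\sum_{j\le m}\phi(p^j)=p^m$. Your final bound of each factor by $2p^{a/2}$ and the product over $p\mid N$ then recovers $2^{\omega(N)}\sqrt{N}$ exactly as intended.
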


Finally, we prove Lemma \ref{lem:omega-sigma0-pi-bounds} to bound the multiplicative functions  
$2^{\omega(N)}$ and $\nu(N)$. Both of these functions are $O(N^{\varepsilon})$ for any $\varepsilon > 0$ \cite[Sections~18.1,~22.13]{hardy-wright}. But Lemma \ref{lem:omega-sigma0-pi-bounds} gives more explicit bounds, which we will need later.
\begin{lemma} \label{lem:omega-sigma0-pi-bounds}
    Let $\omega(N)$ denote the number of distinct prime divisors of $N$, and let \\ 
    $ \displaystyle
        \nu(N) := \prod_{p\mid N} \begin{cases} 
            4 & \text{if } p=2 \\
            \lrp{1+\frac{2}{p-2}}  & \text{if } p \neq 2
        \end{cases}
    $. Then $
        2^{\omega(N)} \leq 4.862 \cdot N^{1/4} 
        $
        and 
        $
        \nu(N) \leq 21.234 \cdot N^{1/16}
        $.
\end{lemma}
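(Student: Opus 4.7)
The plan is to exploit multiplicativity. Both $N \mapsto 2^{\omega(N)}/N^{1/4}$ and $N \mapsto \nu(N)/N^{1/16}$ are multiplicative functions of $N$, so their maxima are attained by choosing, independently at each prime $p$, whether to omit $p$ (giving a factor of $1$) or to include it at whichever exponent $a \geq 1$ maximizes the prime-power value.

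First I would observe that in both cases the per-prime maximum is attained at $a=1$: the quantity $2/p^{a/4}$ is strictly decreasing in $a$, and $\nu(p^a)/p^{a/16} = \nu(p)/p^{a/16}$ is too, since $\nu$ depends only on the radical of its argument. Hence the overall maxima factor as finite Euler products over precisely those primes for which the $a=1$ factor exceeds $1$:
\begin{align}
    \max_N \frac{2^{\omega(N)}}{N^{1/4}} \ = \prod_{p\,:\, 2/p^{1/4} > 1} \frac{2}{p^{1/4}}, \qquad
    \max_N \frac{\nu(N)}{N^{1/16}} \ = \prod_{p\,:\, \nu(p) > p^{1/16}} \frac{\nu(p)}{p^{1/16}}.
\end{align}

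Next I would identify the finite set of primes contributing to each product. For the first, $2/p^{1/4} > 1 \iff p < 16$, giving $\{2,3,5,7,11,13\}$. For the second, writing $\nu(p) = p/(p-2)$ for odd $p$ and $\nu(2) = 4$, the function $\nu(p)/p^{1/16}$ tends to $0$ as $p \to \infty$, so the contributing set is finite; a short numerical check (comparing $13/11$ with $13^{1/16}$ and $17/15$ with $17^{1/16}$) pins it down as again $\{2,3,5,7,11,13\}$.

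Finally, I would evaluate the two finite products. The first telescopes to $64/30030^{1/4}$, which is routinely bounded by $4.862$, and the second evaluates to $(364/9)/30030^{1/16}$, bounded by $21.234$. The whole argument is short; the only mild ``obstacle'' is the numerical verification that $p=13$ is the correct cutoff in the $\nu$-case (rather than some smaller or larger prime), which reduces to the two explicit comparisons just noted.
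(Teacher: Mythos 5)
Your proposal is correct and is essentially the paper's own argument in different clothing: the paper bounds each prime-power factor by $c_p\,p^{r/4}$ (resp.\ $c_p'\,p^{r/16}$) with $c_p=1$ for $p\ge 17$, which amounts to exactly your observation that only $p\in\{2,3,5,7,11,13\}$ contribute a factor exceeding $1$ to the multiplicative ratio, and the resulting constants $c_2\cdots c_{13}=64/30030^{1/4}\le 4.862$ and $c_2'\cdots c_{13}'=(364/9)/30030^{1/16}\le 21.234$ agree with your finite products. The only cosmetic difference is that you phrase it as computing the supremum of a multiplicative function while the paper writes a chain of inequalities; your cutoff check at $p=17$ (together with the monotonicity of $p/(p-2)$ and $p^{1/16}$) matches the paper's verification that $1+\tfrac{2}{p-2}\le p^{1/16}$ for $p\ge 17$.
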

\begin{proof}
    For the $2^{\omega(N)}$ bound, observe that $2 \leq p^{1/4}$ for $p \geq 17$. So let $c_p = 1$ for $p \geq 17$, and $c_p = 2/\,p^{1/4}$ for $2 \leq p \leq 13$. Then
    \begin{align}
        2^{\omega(N)} &= \prod_{p \mid N} 2 \\
        &\leq \prod_{p \mid N} c_p p^{1/4} \\
        &\leq \prod_{p^r \midmid N} c_p p^{r/4} \\
        &\leq c_2 \cdots c_{13} \cdot N^{1/4} \\
        &\leq 4.862 \cdot N^{1/4}.
    \end{align}
    
    Similarly, for the $\nu(N)$ bound, one can easily verify that $1 + \frac{2}{p-2} \leq p^{1/16}$ for primes $p \geq 17$. So let $c_p' = 1$ for $p \geq 17$, 
    $c_2' = 4 /\,2^{1/16}$, and $c_p' = \lrp{1 + \frac{2}{p-2}}/\,p^{1/16}$ for $3 \leq p \leq 13$. Then by an identical argument,
    \begin{align}
        \nu(N) 
        &= \prod_{p \mid N} \begin{cases} 
            4 & \text{if } p=2 \\
            \lrp{1+\frac{2}{p-2}}  & \text{if } p \neq 2
        \end{cases} \\
        &\leq c_2' \cdots c_{13}' \cdot N^{1/16} \\
        &\leq 21.234 \cdot N^{1/16},
    \end{align}
    completing the proof.
\end{proof}

We now have the tools to prove Theorem \ref{thm:fullspace-dim}.

{
\renewcommand{\thetheorem}{\ref{thm:fullspace-dim}}
\begin{theorem}
    Consider $N \geq 1$, $k \geq 2$, and $\chi$ a Dirichlet character modulo $N$ such that $\chi(-1) = (-1)^k$. Then for all bounds $B \geq 0$, $\dim S_k(\Gamma_0(N), \chi) \leq B$ for only finitely many triples $(N,k,\chi)$.
\end{theorem}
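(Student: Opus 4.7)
The plan is to extract the main term $\frac{k-1}{12}\psi(N)$ from Formula \ref{form:fullspace-dim} and show that it dominates all remaining terms once $N$ or $k$ is sufficiently large.

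First, I would bound each of the four subtracted terms using the preparatory lemmas. By Lemma \ref{lem:congruence-solns}, each of the two character sums $\sum_x \chi(x)$ appearing in the $c_3(k)$ and $c_4(k)$ terms has absolute value at most $2^{\omega(N)}$, and since $c_3(k), c_4(k) \leq 1$ the contribution of these terms is $O(2^{\omega(N)})$. By Lemma \ref{lem:phi-sum}, the fourth term is bounded in absolute value by $\tfrac{1}{2} \cdot 2^{\omega(N)} \sqrt{N}$ (noting that the restricted sum over $d \mid N$ with $(d, N/d) \mid N/f$ is dominated by the full sum over $d \mid N$). Finally, $|c_0(k,\chi)| \leq 1$. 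Applying Lemma \ref{lem:omega-sigma0-pi-bounds} to turn $2^{\omega(N)}$ into $O(N^{1/4})$, the total error is $O(N^{3/4})$.

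Since $\psi(N) \geq N$, the main term satisfies $\tfrac{k-1}{12}\psi(N) \geq \tfrac{k-1}{12} N$. Combining these estimates, for some absolute constant $C > 0$,
\[
\dim S_k(\Gamma_0(N), \chi) \;\geq\; \frac{k-1}{12}\, N \;-\; C\, N^{3/4} \;-\; 1.
\]
Now suppose $\dim S_k(\Gamma_0(N),\chi) \leq B$. Using $k \geq 2$, this forces $\tfrac{N}{12} \leq B + C N^{3/4} + 1$, and since $N$ grows faster than $N^{3/4}$, this bounds $N$ from above by some $N_0 = N_0(B)$. For each $N \leq N_0$ there are only finitely many Dirichlet characters $\chi$ modulo $N$. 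Rearranging the same inequality with $N$ fixed yields $k \leq 1 + \tfrac{12(B + C N^{3/4} + 1)}{N}$, which bounds $k$ as well. Thus only finitely many admissible triples $(N, k, \chi)$ can satisfy the inequality.

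There is no real obstacle once the preparatory lemmas are available; the argument is essentially a ``main term beats error'' estimate. The only points requiring a bit of care are (i) verifying the restricted divisor sum really is majorized by the sum treated in Lemma \ref{lem:phi-sum}, and (ii) handling the degenerate case $N = 1$ (where $\omega(N) = 0$ and the congruence sums reduce to direct checks), but neither introduces a genuine difficulty.
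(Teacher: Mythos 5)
Your proposal is correct and follows essentially the same route as the paper: bound the two character sums by $2^{\omega(N)}$ via Lemma \ref{lem:congruence-solns}, majorize the restricted divisor sum by the full sum and apply Lemma \ref{lem:phi-sum}, then observe that the main term $\frac{k-1}{12}\psi(N) \geq \frac{k-1}{12}N$ dominates the $O(N^{3/4})$ error for large $N$, and bound $k$ for each remaining $N$. The only cosmetic difference is that you invoke the explicit $2^{\omega(N)} = O(N^{1/4})$ bound up front, whereas the paper keeps the error term as $E(N) = O(N^{-1/2+\varepsilon})$ in the proof and reserves the explicit constants for the subsequent table computations.
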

\addtocounter{theorem}{-1}
}

\begin{proof}
    We give bounds on each of the terms in Formula \ref{form:fullspace-dim}. 
    
    First, casework modulo $3$ on \eqref{eqn:c3} shows that $\lrabs{c_3(k)} \leq \frac13$. Similarly, casework modulo $4$ on \eqref{eqn:c4} shows that $\lrabs{c_4(k)} \leq \frac12$. 
    
    Next, Lemma \ref{lem:congruence-solns} shows that 
    \begin{align} \label{chi-sum-1-bound}
        \lrabs{\sum_{
            \substack{x \mod N \\  x^2+x+1\equiv0 \mod N}
        }\mspace{-35mu} \chi(x) } 
         &\leq S(N)
         \leq  2^{\omega(N)}
    \end{align}
    and
    \begin{align} \label{chi-sum-2-bound}
        \lrabs{\sum_{
            \substack{x \mod N \\  x^2+1\equiv0 \mod N}
        }\mspace{-25mu} \chi(x) }
        &\leq S'(N)
         \leq  2^{\omega(N)}.
    \end{align}

    Finally, Lemma \ref{lem:phi-sum} shows that
    \begin{align}
        \lrabs{\sum_{
            \substack{d \mid N \\  (d,N/d) \mid N / f}
        }\mspace{-20mu} \phi(\gcd(d,N/d)) }
        \ \ &\leq \ \sum_{
            \substack{d \mid N }
        } \phi(\gcd(d,N/d)) 
        \ \leq \ 2^{\omega(N)} \sqrt{N}.
    \end{align}

    Let $B \geq 0$ be a fixed natural number. Then combining each of the above bounds, we obtain
    \begin{align}
        \dim S_k(\Gamma_0(N), \chi) - B \ =& \   
        \frac{k-1}{12} \psi(N) 
        \ - \ c_3(k) \mspace{-30mu}\sum_{
            \substack{x \mod N \\  x^2+x+1\equiv0 \mod N}
        }\mspace{-35mu} \chi(x) 
        \ - \ c_4(k) \mspace{-20mu}\sum_{
            \substack{x \mod N \\  x^2+1\equiv0 \mod N}
        }\mspace{-25mu} \chi(x) \\
        &- \ \frac12 \mspace{-20mu}\sum_{
            \substack{d \mid N \\  (d,N/d) \mid N / f}
        }\mspace{-20mu} \phi(\gcd(d,N/d))
        \ + \ c_0(k,\chi) \ - \ B \\
        \geq& \ \frac{k-1}{12} \psi(N) - \frac13 \cdot 2^{\omega(N)} - \frac12 \cdot 2^{\omega(N)} - \frac12 \cdot 2^{\omega(N)} \sqrt{N} - B \\
        =& \, \psi(N) \lrb{
          \frac{k-1}{12} - \frac{ \frac{5}{6} \cdot 2^{\omega(N)} + \frac12 \cdot  2^{\omega(N)} \sqrt{N} + B}{\psi(N)} 
        } \\
        =& \, \psi(N) \lrb{
          \frac{k-1}{12} - E(N) 
        }, \label{eqn:fullspace-dim-bound}
    \end{align}
    where
    $$E(N) = \frac{ \frac{5}{6} \cdot 2^{\omega(N)} + \frac12 \cdot  2^{\omega(N)} \sqrt{N} + B}{\psi(N)}.$$
    Now, $2^{\omega(N)} = O(N^{\varepsilon})$ 
    for any $\varepsilon > 0$, and $\psi(N) \geq N$ from \eqref{eqn:psi}. 
    Thus
    \begin{align}
         E(N) = \frac{O(N^{1/2 + \varepsilon})}{\psi(N)} = O(N^{-1/2+\varepsilon}).
    \end{align}
    This means that for $N$ sufficiently large, we will have $E(N) < \frac{1}{12}$, and hence by
    \eqref{eqn:fullspace-dim-bound}, 
    \begin{align}
        \dim S_k(\Gamma_0(N), \chi) - B > \psi(N) \lrb{\frac{k-1}{12} - \frac{1}{12}} \geq 0.
    \end{align}
    So for sufficiently large $N$, we have $\dim S_k(\Gamma_0(N), \chi) > B$ (independently of $k$ and $\chi$). 
    Then for each of the finitely many remaining values of $N$, $\frac{k-1}{12}$ will be $ > E(N)$ for sufficiently large $k$. 
    This means that $\dim S_k(\Gamma_0(N), \chi) \leq B$ for only finitely many triples $(N,k,\chi)$.
\end{proof}

We note here that since $S_k(\Gamma_0(N), \chi)$ is contained in $M_k(\Gamma_0(N), \chi)$ and $S_k(\Gamma_0(N))$ is contained in $M_k(\Gamma_1(N))$, $S_k(\Gamma_1(N))$, and $M_k(\Gamma_1(N))$, we also have the same result for those spaces.

\begin{corollary}
    Consider $N \geq 1$, $k \geq 2$, and $\chi$ a Dirichlet character modulo $N$ such that $\chi(-1) = (-1)^k$. Then for all bounds $B \geq 0$, $\dim M_k(\Gamma_0(N), \chi) \leq B$ for only finitely many triples $(N,k,\chi)$. Similarly, for $N \geq 1$ and $k \geq 2$ even, 
    $\dim S_k(\Gamma_0(N))$, $\dim M_k(\Gamma_0(N))$, $\dim S_k(\Gamma_1(N))$, and $\dim M_k(\Gamma_1(N))$ are $\leq B$ for only finitely many pairs $(N,k)$.
\end{corollary}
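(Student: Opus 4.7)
The proof plan is to deduce the corollary directly from Theorem \ref{thm:fullspace-dim} using the containment relations between these spaces of modular forms. The key observation is that each of the larger spaces $M_k(\Gamma_0(N),\chi)$, $M_k(\Gamma_0(N))$, $S_k(\Gamma_1(N))$, and $M_k(\Gamma_1(N))$ contains a copy of a cuspidal nebentypus space, so a uniform upper bound $B$ on the dimension of the larger space transfers immediately to the contained subspace, and dimension monotonicity under inclusion reduces everything to the cuspidal case already settled.

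First, for the claim about $M_k(\Gamma_0(N),\chi)$: since $S_k(\Gamma_0(N),\chi) \subseteq M_k(\Gamma_0(N),\chi)$, we have $\dim S_k(\Gamma_0(N),\chi) \leq \dim M_k(\Gamma_0(N),\chi)$. Consequently, if $\dim M_k(\Gamma_0(N),\chi) \leq B$, then $\dim S_k(\Gamma_0(N),\chi) \leq B$ as well, and applying Theorem \ref{thm:fullspace-dim} gives that only finitely many triples $(N,k,\chi)$ can satisfy this inequality.

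For the second claim, which restricts to $k \geq 2$ even so that the trivial character satisfies the parity condition $\chi(-1) = (-1)^k$, I would use the fact that $S_k(\Gamma_0(N))$ embeds as a subspace of each of $M_k(\Gamma_0(N))$, $S_k(\Gamma_1(N))$, and $M_k(\Gamma_1(N))$. Hence if any one of these three spaces has dimension $\leq B$, then so does $S_k(\Gamma_0(N))$. Since $S_k(\Gamma_0(N))$ coincides with $S_k(\Gamma_0(N),\chi)$ for $\chi$ the trivial character modulo $N$, applying Theorem \ref{thm:fullspace-dim} in this special case yields the finiteness of the set of pairs $(N,k)$ for which any of these three bounds can hold.

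No substantive obstacle arises in this argument; it is entirely a monotonicity-of-dimension reduction to Theorem \ref{thm:fullspace-dim}. The only small technical point worth noting is the parity compatibility in the trivial-character case: restricting to even $k$ ensures that $\chi_{\mathrm{triv}}(-1) = 1 = (-1)^k$, so Theorem \ref{thm:fullspace-dim} applies to the triple $(N, k, \chi_{\mathrm{triv}})$ without issue.
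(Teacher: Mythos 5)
Your proposal is correct and matches the paper's argument exactly: the paper likewise deduces the corollary from the containments $S_k(\Gamma_0(N),\chi) \subseteq M_k(\Gamma_0(N),\chi)$ and $S_k(\Gamma_0(N)) \subseteq M_k(\Gamma_0(N)),\, S_k(\Gamma_1(N)),\, M_k(\Gamma_1(N))$ together with Theorem \ref{thm:fullspace-dim}. Your added remark about parity compatibility for the trivial character is a correct (if routine) point that the paper leaves implicit.
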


By computing the bounds given in Theorem \ref{thm:fullspace-dim}, we also give the complete list of triples $(N,k,\chi)$ for which $\dim S_k(\Gamma_0(N), \chi) \leq 2$. To find $N$ large enough such that $E(N) < \frac{1}{12}$, we use the explicit bound  $2^{\omega(N)} \leq 4.862 \cdot N^{1/4}$ from Lemma \ref{lem:omega-sigma0-pi-bounds}.
This yields
\begin{align}
    E(N) &= \frac{\frac56 \cdot 2^{\omega(N)} + \frac12 \cdot 2^{\omega(N)} \sqrt{N} + 2}{\psi(N)} \\
    &\leq  \frac{ \frac56 \cdot 4.862 \cdot N^{1/4} + \frac12 \cdot 4.862 \cdot N^{3/4} + 2}{N},
\end{align}
which will be $< \frac{1}{12}$ for all $N \geq 729974$.
Then we just need to check all the $N < 729974$ by computer; see \cite{ross-code} for the code. This yields the complete list given in Tables \ref{table:fullspace-dim0}, \ref{table:fullspace-dim1}, and \ref{table:fullspace-dim2}.

In these tables, we identify the characters $\chi$ by their Conrey label \cite{lmfdb-conrey-label}. If one is only interested in trivial character, these tables of course also determine the complete list of pairs $(N,k)$ for which $\dim S_k(\Gamma_0(N)) \leq 2$. These would be the entries with Conrey label $1$.

\newpage 
 
\begin{mytable} \label{table:fullspace-dim0}
\begin{equation}
\setlength{\arraycolsep}{1mm}
\begin{array}{|c|c|c|c|c|c|c|c|c|}
\hline
\multicolumn{9}{|c|}{\makecell{
    \text{All triples $(N,k,\chi)$ for which $\dim S_k(\Gamma_0(N),\chi) = 0$.} \\
    \text{The characters $\chi$ here are identified by their Conrey label.}
  }} \\
\hline
(1, 2, 1) & (1, 4, 1) & (1, 6, 1) & (1, 8, 1) & (1, 10, 1) & (1, 14, 1) & (2, 2, 1) & (2, 4, 1) & (2, 6, 1) \\
\hline
(3, 2, 1) & (3, 3, 2) & (3, 4, 1) & (3, 5, 2) & (4, 2, 1) & (4, 3, 3) & (4, 4, 1) & (5, 2, 1) & (5, 2, 4) \\
\hline
(5, 3, 2) & (5, 3, 3) & (5, 4, 4) & (6, 2, 1) & (6, 3, 5) & (7, 2, 1) & (7, 2, 2) & (7, 2, 4) & (7, 3, 3) \\
\hline
(7, 3, 5) & (8, 2, 1) & (8, 2, 5) & (8, 3, 7) & (9, 2, 1) & (9, 2, 4) & (9, 2, 7) & (9, 3, 8) & (10, 2, 1) \\
\hline
(10, 2, 9) & (11, 2, 3) & (11, 2, 4) & (11, 2, 5) & (11, 2, 9) & (12, 2, 1) & (12, 2, 11) & (13, 2, 1) & (13, 2, 3) \\
\hline
(13, 2, 9) & (13, 2, 12) & (14, 2, 9) & (14, 2, 11) & (15, 2, 2) & (15, 2, 4) & (15, 2, 8) & (16, 2, 1) & (16, 2, 9) \\
\hline
(17, 2, 4) & (17, 2, 13) & (17, 2, 16) & (18, 2, 1) & (19, 2, 7) & (19, 2, 11) & (20, 2, 9) & (21, 2, 20) & (24, 2, 23) \\
\hline
(25, 2, 1) & (25, 2, 24) & (27, 2, 10) & (27, 2, 19) & (32, 2, 17)  & \multicolumn{4}{c|}{~} \\
\hline
\end{array}
\end{equation}
\end{mytable}

\begin{mytable} \label{table:fullspace-dim1}
\begin{equation}
\setlength{\arraycolsep}{1mm}
\begin{array}{|c|c|c|c|c|c|c|c|c|}
\hline
\multicolumn{9}{|c|}{\makecell{
    \text{All triples $(N,k,\chi)$ for which $\dim S_k(\Gamma_0(N),\chi) = 1$.} \\
    \text{The characters $\chi$ here are identified by their Conrey label.}
  }} \\
\hline
(1, 12, 1) & (1, 16, 1) & (1, 18, 1) & (1, 20, 1) & (1, 22, 1) & (1, 26, 1) & (2, 8, 1) & (2, 10, 1) & (3, 6, 1) \\
\hline
(3, 7, 2) & (3, 8, 1) & (4, 5, 3) & (4, 6, 1) & (5, 4, 1) & (5, 5, 2) & (5, 5, 3) & (5, 6, 1) & (6, 4, 1) \\
\hline
(7, 3, 6) & (7, 4, 1) & (7, 4, 2) & (7, 4, 4) & (7, 5, 6) & (8, 3, 3) & (8, 4, 1) & (9, 3, 2) & (9, 3, 5) \\
\hline
(9, 4, 1) & (10, 3, 3) & (10, 3, 7) & (11, 2, 1) & (11, 3, 2) & (11, 3, 6) & (11, 3, 7) & (11, 3, 8) & (11, 3, 10) \\
\hline
(12, 3, 5) & (13, 2, 4) & (13, 2, 10) & (13, 3, 2) & (13, 3, 6) & (13, 3, 7) & (13, 3, 11) & (14, 2, 1) & (15, 2, 1) \\
\hline
(16, 2, 5) & (16, 2, 13) & (16, 3, 15) & (17, 2, 1) & (17, 2, 2) & (17, 2, 8) & (17, 2, 9) & (17, 2, 15) & (18, 2, 7) \\
\hline
(18, 2, 13) & (19, 2, 1) & (19, 2, 4) & (19, 2, 5) & (19, 2, 6) & (19, 2, 9) & (19, 2, 16) & (19, 2, 17) & (20, 2, 1) \\
\hline
(20, 2, 3) & (20, 2, 7) & (21, 2, 1) & (21, 2, 4) & (21, 2, 5) & (21, 2, 16) & (21, 2, 17) & (22, 2, 3) & (22, 2, 5) \\
\hline
(22, 2, 9) & (22, 2, 15) & (23, 2, 2) & (23, 2, 3) & (23, 2, 4) & (23, 2, 6) & (23, 2, 8) & (23, 2, 9) & (23, 2, 12) \\
\hline
(23, 2, 13) & (23, 2, 16) & (23, 2, 18) & (24, 2, 1) & (25, 2, 6) & (25, 2, 11) & (25, 2, 16) & (25, 2, 21) & (26, 2, 3) \\
\hline
(26, 2, 9) & (27, 2, 1) & (28, 2, 9) & (28, 2, 25) & (29, 2, 7) & (29, 2, 16) & (29, 2, 20) & (29, 2, 23) & (29, 2, 24) \\
\hline
(29, 2, 25) & (31, 2, 2) & (31, 2, 4) & (31, 2, 8) & (31, 2, 16) & (32, 2, 1) & (36, 2, 1) & (37, 2, 10) & (37, 2, 26) \\
\hline
(49, 2, 1) & (49, 2, 18) & (49, 2, 30)  & \multicolumn{6}{c|}{~} \\
\hline
\end{array}
\end{equation}
\end{mytable} 

\begin{mytable} \label{table:fullspace-dim2}
\begin{equation}
\setlength{\arraycolsep}{1mm}
\begin{array}{|c|c|c|c|c|c|c|c|c|}
\hline
\multicolumn{9}{|c|}{\makecell{
    \text{All triples $(N,k,\chi)$ for which $\dim S_k(\Gamma_0(N),\chi) = 2$.} \\
    \text{The characters $\chi$ here are identified by their Conrey label.}
  }} \\
\hline
(1, 24, 1) & (1, 28, 1) & (1, 30, 1) & (1, 32, 1) & (1, 34, 1) & (1, 38, 1) & (2, 12, 1) & (2, 14, 1) & (3, 9, 2) \\
\hline
(3, 10, 1) & (3, 11, 2) & (4, 7, 3) & (4, 8, 1) & (5, 6, 4) & (5, 7, 2) & (5, 7, 3) & (5, 8, 4) & (6, 5, 5) \\
\hline
(7, 5, 3) & (7, 5, 5) & (7, 6, 2) & (7, 6, 4) & (8, 4, 5) & (8, 5, 7) & (9, 4, 4) & (9, 4, 7) & (9, 5, 8) \\
\hline
(10, 4, 9) & (11, 4, 1) & (11, 4, 3) & (11, 4, 4) & (11, 4, 5) & (11, 4, 9) & (12, 3, 7) & (13, 3, 5) & (13, 3, 8) \\
\hline
(13, 4, 4) & (13, 4, 10) & (13, 4, 12) & (14, 3, 3) & (14, 3, 5) & (14, 3, 13) & (15, 3, 7) & (15, 3, 11) & (15, 3, 13) \\
\hline
(15, 3, 14) & (16, 3, 7) & (17, 3, 3) & (17, 3, 5) & (17, 3, 6) & (17, 3, 7) & (17, 3, 10) & (17, 3, 11) & (17, 3, 12) \\
\hline
(17, 3, 14) & (18, 3, 17) & (19, 3, 2) & (19, 3, 3) & (19, 3, 10) & (19, 3, 13) & (19, 3, 14) & (19, 3, 15) & (22, 2, 1) \\
\hline
(23, 2, 1) & (24, 2, 11) & (24, 2, 13) & (25, 2, 4) & (25, 2, 9) & (25, 2, 14) & (25, 2, 19) & (25, 3, 7) & (25, 3, 18) \\
\hline
(26, 2, 1) & (26, 2, 17) & (26, 2, 23) & (26, 2, 25) & (27, 2, 4) & (27, 2, 7) & (27, 2, 13) & (27, 2, 16) & (27, 2, 22) \\
\hline
(27, 2, 25) & (28, 2, 1) & (28, 2, 3) & (28, 2, 19) & (28, 2, 27) & (29, 2, 1) & (29, 2, 4) & (29, 2, 5) & (29, 2, 6) \\
\hline
(29, 2, 9) & (29, 2, 13) & (29, 2, 22) & (29, 2, 28) & (30, 2, 17) & (30, 2, 19) & (30, 2, 23) & (31, 2, 1) & (31, 2, 5) \\
\hline
(31, 2, 7) & (31, 2, 9) & (31, 2, 10) & (31, 2, 14) & (31, 2, 18) & (31, 2, 19) & (31, 2, 20) & (31, 2, 25) & (31, 2, 28) \\
\hline
(32, 2, 9) & (32, 2, 25) & (33, 2, 2) & (33, 2, 4) & (33, 2, 8) & (33, 2, 16) & (33, 2, 17) & (33, 2, 25) & (33, 2, 29) \\
\hline
(33, 2, 31) & (33, 2, 32) & (34, 2, 13) & (34, 2, 21) & (34, 2, 33) & (35, 2, 3) & (35, 2, 4) & (35, 2, 9) & (35, 2, 11) \\
\hline
(35, 2, 12) & (35, 2, 13) & (35, 2, 16) & (35, 2, 17) & (35, 2, 27) & (35, 2, 29) & (35, 2, 33) & (36, 2, 35) & (37, 2, 1) \\
\hline
(37, 2, 7) & (37, 2, 9) & (37, 2, 11) & (37, 2, 12) & (37, 2, 16) & (37, 2, 27) & (37, 2, 33) & (37, 2, 34) & (37, 2, 36) \\
\hline
(39, 2, 5) & (39, 2, 8) & (39, 2, 25) & (40, 2, 7) & (40, 2, 9) & (40, 2, 23) & (41, 2, 4) & (41, 2, 10) & (41, 2, 16) \\
\hline
(41, 2, 18) & (41, 2, 23) & (41, 2, 25) & (41, 2, 31) & (41, 2, 37) & (41, 2, 40) & (43, 2, 4) & (43, 2, 11) & (43, 2, 16) \\
\hline
(43, 2, 21) & (43, 2, 35) & (43, 2, 41) & (45, 2, 8) & (45, 2, 17) & (45, 2, 19) & (48, 2, 47) & (50, 2, 1) & (50, 2, 49) \\
\hline
(64, 2, 33)  & \multicolumn{8}{c|}{~} \\
\hline
\end{array}
\end{equation}
\end{mytable}



\section{The dimension of \texorpdfstring{$S_k^\nw(\Gamma_0(N),\chi)$}{S\_k\^new(Gamma0(N),chi)}  }    \label{sec:dim-formula-newspace}
Next, we turn our attention to the newspace. We start by giving the classical dimension formula for $S_k^\nw(\Gamma_0(N), \chi)$. 
\begin{formula}[{\cite[Corollary~13.3.7]{cohen-stromberg}}]  \label{form:newspace-dim}
    Let $\beta$ be the multiplicative function defined by
    \begin{equation} \label{eqn:beta-def}
        \beta(p^r) = \begin{cases}
            -2 & \text{if } r=1 \\
            1 & \text{if } r=2 \\
            0 & \text{if } r \geq 3,
        \end{cases} 
    \end{equation}
    and let $N \geq 1$, $k \geq 2$, and $\chi$ be a Dirichlet character modulo $N$ of conductor $f=\condf(\chi)$ such that $\chi(-1) = (-1)^k$. 
    Then 
    \begin{align}
        \dim S_k^\nw(\Gamma_0(N),\chi) = \sum_{f \mid M \mid N} \beta(N/M) \dim S_k(\Gamma_0(M), \chi).
    \end{align}
\end{formula}

We then write each of the $\dim S_k(\Gamma_0(N), \chi)$ terms appearing in Formula \ref{form:newspace-dim} as a linear combination of multiplicative functions. In the following, $v_p(\cdot)$ denotes $p$-adic valuation.

\begin{lemma} \label{lem:dim-linear-comb-mult}
    Let $N \geq 1$, $k\geq 2$, and $\chi$ be a primitive character modulo $f$ with $f \mid N$ and where $\chi(-1) = (-1)^k$. Factor $N = p_1^{r_1} \cdots p_t^{r_t}$, and write $f = p_1^{\alpha_1} \cdots p_t^{\alpha_t}$ (the $\alpha_i$ are possibly $0$ here). Then factor $\chi$ as $\chi = \chi_{p_1^{\alpha_1}} \cdots \chi_{p_t^{\alpha_t}}$ where each $\chi_{p^\alpha}$ is a primitive character modulo $p^\alpha$ \cite[p.~71]{stein}. We also consider $\chi_{p^0}$ to be the trivial character modulo $1$ even for $p \nmid N$. Then define the three arithmetic functions $\rho$, $\rho'$, and $\sigma$ on prime powers as 
    \begin{align}
        \rho(p^r) &:= 
        \begin{dcases}
            \sum_{
                \substack{x \mod p^r \\  x^2+x+1\equiv0 \mod p^r}
            }\mspace{-35mu} \chi_{p^\alpha}(x) &\text{ if } r \geq \alpha  \ \ (\text{where } \alpha=v_p(f)) \\
            1 &\text{ if } 1 \leq r < \alpha
        \end{dcases}   \label{eqn:rho-def}  \\
        \rho'(p^r) &:= 
        \begin{dcases}
            \sum_{
                \substack{x \mod p^r \\  x^2+1\equiv0 \mod p^r}
            }\mspace{-35mu} \chi_{p^\alpha}(x) &\text{ if } r \geq \alpha  \ \ (\text{where } \alpha=v_p(f)) \\
            1 &\text{ if } 1 \leq r < \alpha
        \end{dcases}   \label{eqn:rho-prime-def}  \\
        \sigma(p^r) &:= 
        \begin{dcases}
            \sum_{
                \substack{d \mid p^r \\   \gcd(d,p^r/d) \mid p^{r-\alpha} }
            }\mspace{-20mu} \phi(\gcd(d, p^r / d))
            &\text{ if } r \geq \alpha  \ \ (\text{where } \alpha=v_p(f)) \\
            1 &\text{ if } 1 \leq r < \alpha,
        \end{dcases} \label{eqn:sigma-def} 
    \end{align}
    and extend these three functions multiplicatively. 
    Then 
    \begin{align}
        \dim S_k(\Gamma_0(N),\chi) = \frac{k-1}{12} \psi(N) - c_3(k) \rho(N) - c_4(k) \rho'(N) - \frac{1}{2} \sigma(N) + c_0(k,\chi).
    \end{align}
\end{lemma}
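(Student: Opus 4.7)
My plan is to match Formula \ref{form:fullspace-dim} against the claimed decomposition term by term. The $\frac{k-1}{12}\psi(N)$ piece and the $c_0(k,\chi)$ piece already agree by inspection, so the content is to show that the three remaining sums in Formula \ref{form:fullspace-dim} factor over the prime power decomposition of $N$ and agree locally with $\rho$, $\rho'$, $\sigma$ at each prime power $p_i^{r_i} \midmid N$.

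For the $\rho$ identity, I would invoke the Chinese Remainder Theorem to put the solutions of $x^2 + x + 1 \equiv 0 \pmod{N}$ in bijection with tuples $(x_i)_{i=1}^t$ satisfying $x_i^2 + x_i + 1 \equiv 0 \pmod{p_i^{r_i}}$. Since $\chi = \prod_i \chi_{p_i^{\alpha_i}}$, and $\chi_{p_i^{\alpha_i}}(x)$ depends only on $x \bmod p_i^{\alpha_i}$ (which is determined by $x_i$, using $\alpha_i \leq r_i$ from $f \mid N$), the global character sum factors as a product of local sums, each of which matches $\rho(p_i^{r_i})$ by the first case of \eqref{eqn:rho-def}. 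The boundary $\alpha_i = 0$ (that is, $p_i \nmid f$) is handled by the convention that $\chi_{p^0}$ is trivial, so the local sum simply counts solutions. The $\rho'$ identity is entirely analogous, via the factorization of solutions of $x^2 + 1 \equiv 0$ and \eqref{eqn:rho-prime-def}.

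For the $\sigma$ identity, writing $d = \prod_i p_i^{s_i}$ with $0 \leq s_i \leq r_i$, one has $\gcd(d, N/d) = \prod_i p_i^{\min(s_i, r_i - s_i)}$ and $N/f = \prod_i p_i^{r_i - \alpha_i}$, so the global divisibility $\gcd(d, N/d) \mid N/f$ is equivalent to the conjunction of the local conditions $\min(s_i, r_i - s_i) \leq r_i - \alpha_i$ for each $i$. Multiplicativity of $\phi$ on pairwise coprime factors then lets the full sum split as $\prod_i \sigma(p_i^{r_i})$ via the first case of \eqref{eqn:sigma-def}.

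The whole argument is CRT bookkeeping and should be quite routine; the main thing to watch is that the second case $1 \leq r < \alpha$ of each definition is never actually invoked at this step, because $f \mid N$ forces $r \geq \alpha$ at every prime of $N$. That second case is a convention set up for use later in Section \ref{sec:explicit-dim-formula}, when $\rho$, $\rho'$, $\sigma$ are convolved against $\beta$ and evaluated at arguments like $N/f$, whose prime power valuations can be smaller than $\alpha$. So the only genuine subtlety here is handling primes $p \mid N$ with $p \nmid f$ uniformly with primes $p \mid f$, which is dispatched cleanly by the $\chi_{p^0}$ convention.
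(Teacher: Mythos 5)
Your proposal is correct and follows essentially the same route as the paper: both reduce the three character/divisor sums in Formula \ref{form:fullspace-dim} to products of local sums via the Chinese Remainder Theorem and the factorization $\chi = \prod_i \chi_{p_i^{\alpha_i}}$, using $r_i \geq \alpha_i$ (from $f \mid N$) so that only the first case of each definition is ever invoked. Your closing remark about the role of the $1 \leq r < \alpha$ convention is also accurate; it is indeed only needed later when these functions are evaluated at divisors of $N/f$.
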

\begin{proof}
    Recall that the space $S_k(\Gamma_0(N),\chi)$ is unchanged whether we consider $\chi$ to be a character modulo $N$ or modulo $f$. And none of the terms in Formula \ref{form:fullspace-dim} are affected in either case. So it is perfectly well-defined to ask about $S_k(\Gamma_0(N),\chi)$ here and use Formula \ref{form:fullspace-dim} to compute its dimension. 
    
    Now, comparing \eqref{eqn:rho-def} with Formula \ref{form:fullspace-dim}, we first need to show that 
    \begin{align}
        \rho(N) = \mspace{-30mu}\sum_{
            \substack{x \mod N \\  x^2+x+1\equiv0 \mod N}
        }\mspace{-35mu} \chi(x).
    \end{align}
    Recall that $N = p_1^{r_1} \cdots p_t^{r_t}$, and observe that $r_i \geq \alpha_i$ for $1 \leq i \leq t$. Thus by \eqref{eqn:rho-def} and the Chinese Remainder Theorem,
    \begin{align}
        \rho(N) 
        &= \prod_{1 \leq i \leq t} \rho(p_i^{r_i}) \\
        &= \prod_{1 \leq i \leq t}  \sum_{
                \substack{x_i \mod p_i^{r_i} \\  x_i^2+x_i+1\equiv0 \mod p_i^{r_i}}
            }\mspace{-35mu} \chi_{p_i^{\alpha_i}}(x_i) \\
        &= \sum_{\substack{
            x_1^2 + x_1 + 1 \equiv 0 \mod p_1^{r_1} \\ 
               \vspace{-3.3mm} \\ 
               \vdots \\
            x_t^2 + x_t + 1 \equiv 0 \mod p_t^{r_t}
        }} \ \prod_{1 \leq i \leq t} \chi_{p_i^{\alpha_i}}(x_i) \\
        &= \sum_{ \substack{x \mod N \\  x^2+x+1\equiv0 \mod N} } \ \prod_{1 \leq i \leq t} \chi_{p_i^{\alpha_i}}(x) \\
        &= \sum_{ \substack{x \mod N \\  x^2+x+1\equiv0 \mod N} } \chi(x).
    \end{align}
    
    By an identical argument, we also have that
    \begin{equation}
        \rho'(N) = \mspace{-30mu}\sum_{
            \substack{x \mod N \\  x^2+1\equiv0 \mod N}
        }\mspace{-35mu} \chi(x) \,.
    \end{equation}

    Then comparing \eqref{eqn:sigma-def} with Formula \ref{form:fullspace-dim}, we also need to show that 
    \begin{equation}
        \sigma(N) = \sum_{
            \substack{d \mid N \\  \gcd(d,N/d) \mid N / f}
        }\mspace{-20mu} \phi(\gcd(d,N/d)).
    \end{equation}
    
    By \eqref{eqn:sigma-def} and the Chinese Remainder Theorem,
    \begin{align}
        \sigma(N) &= \prod_{1 \leq i \leq t} \sigma(p_i^{r_i}) \\
        &= \prod_{1 \leq i \leq t} \sum_{
            \substack{d_i \mid p_i^{r_i} \\  \gcd(d_i,p_i^{r_i}/d_i) \mid {p_i}^{r_i-\alpha_i} }
        }\mspace{-20mu} \phi(\gcd(d_i, p_i^{r_i} / d_i)) \\
        &= \sum_{
        \substack{
            d_1 \mid p_1^{r_1} \\ 
            \gcd(d_1,p_1^{r_1}/d_1) \mid {p_1}^{r_1-\alpha_1} \\
            \vspace{-3.3mm} \\
            \vdots \\
            d_t \mid p_t^{r_t} \\ 
            \gcd(d_t,p_t^{r_t}/d_t) \mid {p_t}^{r_t-\alpha_t} \\
        }
        }\mspace{-10mu} 
        \prod_{1 \leq i \leq t}
        \phi(\gcd(d_i, p_i^{r_i} / d_i)) \\
        &= \sum_{
        \substack{
            d_1 \mid p_1^{r_1} \\ 
            \gcd(d_1,p_1^{r_1}/d_1) \mid {p_1}^{r_1-\alpha_1} \\
            \vspace{-3.3mm} \\
            \vdots \\
            d_t \mid p_t^{r_t} \\ 
            \gcd(d_t,p_t^{r_t}/d_t) \mid {p_t}^{r_t-\alpha_t} \\
        }
        }\mspace{-10mu} 
        \phi\lrp{ \prod_{1 \leq i \leq t} \gcd(d_i, p_i^{r_i} / d_i) } \\
        &= \sum_{\substack{ d \mid N \\ \gcd(d,N/d) \mid N/f}} \phi(\gcd(d, N/d)).
    \end{align}

    Then substituting these three identities into Formula \ref{form:fullspace-dim} yields
    \begin{align}
        \dim S_k(\Gamma_0(N),\chi) = \frac{k-1}{12} \psi(N) - c_3(k) \rho(N) - c_4(k) \rho'(N) - \frac{1}{2} \sigma(N) + c_0(k,\chi),
    \end{align}
    as desired. 
\end{proof}

\section{A theory of partial Dirichlet convolutions} \label{sec:partial-convolutions}

Now that we have written $\dim S_k(\Gamma_0(N),\chi)$ as a linear combination of multiplicative functions, observe that the summation in Formula \ref{form:newspace-dim} resembles a Dirichlet convolution with some of the terms missing. We will refer to sums of this form as ``partial Dirichlet convolutions". We then give a lemma to compute these partial Dirichlet convolutions.
\begin{lemma} \label{lem:partial-convolution} 
    Let $f \geq 1$ be a fixed positive integer. Then for multiplicative functions $\beta$ and $\theta$ with $\theta(f) \neq 0$, define the function $\theta_f$ by $\theta_f(N): = \theta(fN) / \theta(f)$. Then $\theta_f$ is also a multiplicative function, where
    \begin{align} \label{eqn:theta-sub-f}
        \theta_f(p^r) = \begin{dcases}
            \theta(p^r) & \text{if } v_p(f) = 0 \\
            \frac{\theta(p^{r+v_p(f)})}{\theta(p^{v_p(f)})} & \text{in general}. \\
        \end{dcases}
    \end{align}
    Furthermore, the partial convolution of $\beta$ with $\theta$ is given by
    \begin{align}
        \sum_{f \mid M \mid N} \beta(N/M) \theta(M) = \theta(f) \cdot \beta {*} \theta_f(N/f).
    \end{align}
     Here, $v_p(\cdot)$ denotes $p$-adic valuation, and $*$ denotes ordinary Dirichlet convolution.
\end{lemma}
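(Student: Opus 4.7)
The plan is to proceed in two steps: first verify that the function $\theta_f$ is multiplicative (with the prime-power formula \eqref{eqn:theta-sub-f}), and then derive the partial convolution identity by the change of variables $M = fM'$.

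For the first step, I would begin by showing multiplicativity directly from the definition. For coprime $a,b$, I need $\theta(fab)\,\theta(f) = \theta(fa)\,\theta(fb)$. Writing both sides as products over primes via the multiplicativity of $\theta$, this reduces to a prime-by-prime check. For each prime $p$, since $\gcd(a,b)=1$ we have $v_p(a) = 0$ or $v_p(b) = 0$, and the identity $\theta(p^{v_p(f) + v_p(a) + v_p(b)}) \,\theta(p^{v_p(f)}) = \theta(p^{v_p(f) + v_p(a)}) \,\theta(p^{v_p(f) + v_p(b)})$ is immediate in each case. The prime-power formula \eqref{eqn:theta-sub-f} then follows from writing $p^r f = p^{r+v_p(f)} \cdot (f/p^{v_p(f)})$ with coprime factors and applying multiplicativity of $\theta$.

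For the second step, I would reindex the partial convolution. Setting $M = fM'$ as $M$ ranges over divisors of $N$ containing $f$, the variable $M'$ ranges over all divisors of $N/f$, and $N/M = (N/f)/M'$. Hence
\begin{align}
    \sum_{f \mid M \mid N} \beta(N/M)\,\theta(M) = \sum_{M' \mid N/f} \beta\bigl((N/f)/M'\bigr)\,\theta(fM').
\end{align}
By the very definition of $\theta_f$, we have $\theta(fM') = \theta(f) \cdot \theta_f(M')$ for every $M'$, so pulling out the constant factor $\theta(f)$ and recognizing the remaining sum as an ordinary Dirichlet convolution yields $\theta(f) \cdot (\beta * \theta_f)(N/f)$, as desired.

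There is no real obstacle here; the statement is essentially bookkeeping. The only point requiring mild care is the multiplicativity of $\theta_f$, which must be established before one can even speak of the Dirichlet convolution $\beta * \theta_f$; the partial convolution identity itself then falls out of the definition of $\theta_f$ with no computation.
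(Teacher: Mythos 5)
Your proposal is correct and follows essentially the same route as the paper: the paper likewise establishes the prime-power formula for $\theta_f$ by expanding $\theta(fN)/\theta(f)$ as a product over primes, and then obtains the convolution identity by the same substitution $M = fL$ with $L \mid N/f$. The only cosmetic difference is that you verify the coprimality functional equation for $\theta_f$ explicitly before deriving its values on prime powers, whereas the paper reads both off at once from the product expansion.
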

\begin{proof}
    First, we show that $\theta_f(N)$ actually is a multiplicative function of the claimed form. We have,
    \begin{align}
        \theta_f(N) &= \frac{\theta(fN)}{\theta(f)} \\
        &= \prod_{p \mid fN} \frac{\theta(p^{v_p(fN)})}{\theta(p^{v_p(f)})} \\
        &= \prod_{p \mid fN}
        \begin{dcases}
            \frac{\theta(p^{v_p(f)+v_p(N)})}{\theta(p^{v_p(f)})} & \text{ if } v_p(N) \geq 1 \\
            \frac{\theta(p^{v_p(f)})}{\theta(p^{v_p(f)})} & \text{ if } v_p(N) = 0
        \end{dcases} \\
        &= \prod_{p \mid N} \frac{\theta(p^{v_p(f)+v_p(N)})}{\theta(p^{v_p(f)})} \\
        &= \prod_{p \mid N} \begin{dcases}
            \theta(p^{v_p(N)}) & \text{if } v_p(f) = 0 \\
            \frac{\theta(p^{v_p(N)+v_p(f)})}{\theta(p^{v_p(f)})} & \text{in general},
        \end{dcases}
    \end{align}
    as claimed.

    Next, observe that we can convert the desired summation over $f \mid M \mid N$ into a summation over $L \mid N/f$ where $M = f \cdot L$. This yields
    \begin{align}
        \sum_{f \mid M \mid N} \beta(N/M) \theta(M) 
        &= \sum_{L \mid N/f} \beta(N/(f L)) \theta(fL) \\
        &= \theta(f) \cdot \sum_{L \mid N/f} \beta((N/f)/ L) \theta_f(L) \\
        &= \theta(f) \cdot \beta{*}\theta_f(N/f),
    \end{align}
    as desired.
\end{proof}

We note here that the condition $\theta(f) \neq 0$ in the above lemma is not restrictive. If $\theta(M) = 0$ for all $f \mid M \mid N$, then  $\sum_{f \mid M \mid N} \beta(N/M) \theta(M) = 0$. Otherwise, we can always rewrite this partial 
convolution as $\sum_{f' \mid M \mid N} \beta(N/M) \theta(M)$ where $\theta(f') \neq 0$. Factor $N = p_1^{r_1} \cdots p_t^{r_t}$, and write $f = p_1^{\alpha_1} \cdots p_t^{\alpha_t}$ (the $\alpha_i$ are possibly $0$ here). Then for $1 \leq i \leq t$, let $\delta_i$ be the minimal integer in the range $\alpha_i \leq \delta_i \leq r_i$ such that $\theta(p_i^{\delta_i}) \neq 0$. (These $\delta_i$ are guaranteed to exist, since otherwise, we would have $\theta(M) = 0$ for all $f \mid M \mid N$.) Then let $f' := p_1^{\delta_1} \cdots p_t^{\delta_t}$. Observe that $f'$ is constructed here to be the minimum element (with respect to the divisibility ordering) such that $\theta(f') \neq 0$; if $f' \nmid M$ for some $f \mid M \mid N$, then $\theta(M) = 0$. Thus we can rewrite the partial convolution of $\beta$ with $\theta$ as
\begin{align}
    \sum_{f \mid M \mid N} \beta(N/M) \theta(M) = \sum_{f' \mid M \mid N} \beta(N/M) \theta(M),
\end{align}
where $\theta(f') \neq 0$.

For the particular function $\beta$ given in Formula \ref{form:newspace-dim}, we also note how to compute an ordinary Dirichlet convolution of $\beta$ with $\theta$. This lemma follows immediately from \eqref{eqn:beta-def}.
\begin{lemma} \label{lem:beta-convolution}
    Let $\beta$ be the multiplicative function defined in Formula \ref{form:newspace-dim}, and $\theta$ be any other multiplicative function. Then $\beta {*} \theta$ is the multiplicative function defined by
    \begin{align}
        \beta {*} \theta(p^r) = 
        \begin{dcases}
            \theta(p)-2 & \text{ if } r = 1 \\
            \theta(p^r) - 2\theta(p^{r-1}) + \theta(p^{r-2}) & \text{ if } r \geq 2. \\
        \end{dcases}
    \end{align}
\end{lemma}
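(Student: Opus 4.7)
The plan is to observe that the Dirichlet convolution of two multiplicative functions is multiplicative, so it suffices to verify the claimed formula on prime powers. Once restricted to a prime power $p^r$, the convolution becomes the finite sum
\begin{align}
    \beta {*} \theta(p^r) = \sum_{s=0}^{r} \beta(p^s)\,\theta(p^{r-s}),
\end{align}
so the proof reduces to plugging in the three-case definition of $\beta$ from \eqref{eqn:beta-def} and noting that only the terms with $s \in \{0,1,2\}$ can contribute, since $\beta(p^s) = 0$ for $s \geq 3$.

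Concretely, for $r = 1$ there are only two terms, giving $\beta(1)\theta(p) + \beta(p)\theta(1) = \theta(p) - 2$. For $r \geq 2$, keeping only the three nonzero values of $\beta$ yields $\beta(1)\theta(p^r) + \beta(p)\theta(p^{r-1}) + \beta(p^2)\theta(p^{r-2}) = \theta(p^r) - 2\theta(p^{r-1}) + \theta(p^{r-2})$, which matches the claim. There is no real obstacle here; the lemma is essentially just unpacking the definition of Dirichlet convolution against the explicit support $\{1,p,p^2\}$ of $\beta$ on prime powers.
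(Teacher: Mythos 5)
Your proof is correct and is exactly the computation the paper has in mind; the paper simply states that the lemma ``follows immediately from \eqref{eqn:beta-def}'' and omits the details. Your unpacking of the convolution on prime powers, using that $\beta$ is supported on $\{1,p,p^2\}$ with values $1,-2,1$, is the intended argument.
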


\section{An explicit dimension formula for the newspace} \label{sec:explicit-dim-formula}

Throughout this entire section, let $N \geq 1$, $k \geq 2$, and $\chi$ be a Dirichlet character modulo $N$ of conductor $f=\condf(\chi)$ such that $\chi(-1) = (-1)^k$. Also, let $\beta$ denote the multiplicative function defined in Formula \ref{form:newspace-dim}.

Combining Formula \ref{form:newspace-dim} with Lemma \ref{lem:dim-linear-comb-mult} yields
\begin{align} \label{eqn:newspace-dim-sum-mult}
    \dim S_k^\nw(\Gamma_0(N),\chi) &= \sum_{f \mid M \mid N} \beta(N/M) \Bigg[ \frac{k-1}{12} \psi(M) - c_3(k) \rho(M) \\
    & \mspace{150mu} - c_4(k) \rho'(M) - \frac{1}{2} \sigma(M) + c_0(k,\chi) \Bigg].
\end{align}

Lemma \ref{lem:partial-convolution} expresses each of the partial convolutions appearing in this formula using multiplicative functions of the form $\theta$, $\theta_f$, and $\beta {*} \theta_f$ . So in this section, we will use  Lemma \ref{lem:beta-convolution} to compute each of these multiplicative functions. 

First, we compute $\psi$, $\psi_f$, and $\beta {*} \psi_f$.
\begin{formula} \label{form:beta-convolute-psif}
    Let $\psi_f$ be defined as in Lemma \ref{lem:partial-convolution}. For $p$ prime, let $\alpha := v_p(f)$.
    Then  $\psi$, $\psi_f$, and $\beta {*} \psi_f$ are the multiplicative functions defined for $r \geq 1$ by
    \begin{align} 
        \psi(p^r) &=  \lrp{1+\frac1p} p^r \label{eqn:expdim-psi} \\
        \psi_f(p^r) &= 
        \begin{cases}
             \lrp{1+\frac1p} p^r \qquad\qquad & \text{if } \alpha = 0 \\
            p^r & \text{if } \alpha \geq 1
        \end{cases} \label{eqn:expdim-psi-f} \\
        \beta {*} \psi_f(p^r) &= \begin{cases}
            \lrp{1 - \frac{1}{p}} p  & \text{ if } r=1,\ \alpha=0 \\
            \lrp{1 - \frac{1}{p} - \frac{1}{p^2} } p^2  & \text{ if } r=2,\ \alpha=0 \\
            \lrp{ 1 - \frac{1}{p} - \frac{1}{p^2} + \frac{1}{p^3} }  p^r & \text{ if } r\geq3,\ \alpha=0 \\
            \lrp{1 - \frac{2}{p}} p  & \text{ if } r=1,\ \alpha\geq1 \\
            \lrp{1 - \frac{2}{p} + \frac{1}{p^2}} p^r  & \text{ if } r\geq2,\ \alpha\geq1.
        \end{cases} \label{eqn:expdim-beta-psi-f}
    \end{align}
\end{formula}
\begin{proof}
    We already have the first claim of the formula from \eqref{eqn:psi}. 
    
    For the second and third claims, we have by \eqref{eqn:theta-sub-f} that
    \begin{align}
        \psi_f(p^r) &= \begin{dcases}
            \psi(p^{r}) = \lrp{1+\frac1p} p^r  & \text{if } \alpha=0 \\
            \frac{\psi(p^{r+\alpha})}{ \psi( p^\alpha) } = \frac{\lrp{1+\frac1p} p^{r+\alpha} }{\lrp{1+\frac1p} p^\alpha} = p^r & \text{if } \alpha\geq1,
        \end{dcases}
    \end{align}
    as desired.
    
    For the fourth, fifth, and sixth claims, if $\alpha=0$, then by Lemma \ref{lem:beta-convolution} and \eqref{eqn:expdim-psi-f},
    \begin{align}
        \beta {*} \psi_f(p) &= \psi_f(p) - 2 = p-1 = \lrp{1 - \frac{1}{p}} p , \\
        \beta {*} \psi_f(p^2) &= \psi_f(p^2) - 2 \psi_f(p) + 1 = (p+1)p - 2(p+1) + 1 =  \lrp{1 - \frac{1}{p} - \frac{1}{p^2} } p^2 , \\
        \beta {*} \psi_f(p^r) 
        &= \psi_f(p^r) - 2 \psi_f(p^{r-1}) + \psi_f(p^{r-2}) \\
        &= (p+1)p^{r-1} - 2 (p+1)p^{r-2} + (p+1)p^{r-3} \\
        &= p^r - p^{r-1} - p^{r-2} + p^{r-3} \\
        &= \lrp{ 1 - \frac{1}{p} - \frac{1}{p^2} + \frac{1}{p^3} } p^r \qquad \text{for } r \geq 3.
    \end{align}

    For the seventh and eighth claims, if $\alpha\ge1$, then by Lemma \ref{lem:beta-convolution} and \eqref{eqn:expdim-psi-f},
    \begin{align}
        \beta {*} \psi_f(p) &= \psi_f(p) - 2 = \lrp{1 - \frac{2}{p}} p \\
        \beta {*} \psi_f(p^r)  
        &= \psi_f(p^r) - 2 \psi_f(p^{r-1}) + \psi_f(p^{r-2}) = \lrp{1 - \frac{2}{p} + \frac{1}{p^2}} p^r  \qquad \text{for } r\geq 2,
    \end{align}
    completing the proof.
\end{proof}

Next, we compute $\sigma$, $\sigma_f$, and $\beta{*}\sigma_f$. A proof of this formula is given in the appendix.

\begin{formula} \label{form:beta-conv-sigma}
    For $p$ prime, let $\alpha := v_p(f)$.
    Then assuming $r \geq \alpha$, $\sigma$ is the multiplicative function defined for $r \geq 1$ by
    \begin{align}
        \sigma(p^r) &= \begin{cases}
            2 p^{r-\alpha} \qquad\qquad\ \ \ &\text{ if } \alpha \leq r < 2\alpha \\
            2 p^{(r-1)/2}  &\text{ if } r \geq 2\alpha, \ r \text{ odd} \\
            \lrp{1+\frac1p} p^{r/2} &\text{ if } r \geq 2\alpha, \ r \text{ even}. 
        \end{cases}  \label{eqn:expdim-sigma} 
    \end{align}
    Furthermore, $\sigma_f$ and $\beta {*} \sigma_f$ are the multiplicative functions defined for $r \geq 1$ by
    \begin{align}
        \sigma_f(p^r) &=  \begin{cases}
            2 p^{(r-1)/2}  & \text{if $r$ is odd} \\
            \lrp{1+\frac1p}p^{r/2} \qquad\ \ & \text{if $r$ is even} 
        \end{cases} \, \qquad\qquad\qquad\qquad  \text{for } \alpha=0 \label{eqn:expdim-sigma-f-a0} \\
        \sigma_f(p^r) &= \begin{cases}
            p^r & \text{if } r<\alpha \\
            p^{(r+\alpha-1)/2} & \text{if } r \geq \alpha,\ r+\alpha \text{ odd} \\
            \frac12 \lrp{1+\frac1p} p^{(r+\alpha)/2} &\text{if } r \geq \alpha,\ r+\alpha \text{ even} 
        \end{cases} \qquad\qquad\ \! \text{for } \alpha\geq 1  \label{eqn:expdim-sigma-f-a1} \\
        \beta {*} \sigma_f(p^r) &=
        \begin{cases}
            0 & \text{ if $r$ is odd} \\
            p-2 & \text{ if $r=2$} \\
            \lrp{1 - \frac2p + \frac{1}{p^2}} p^{r/2} & \text{ if $r \geq 4$ even}
        \end{cases} 
        \qquad\qquad\qquad \ \ \ \ 
        \text{for } \alpha = 0  \label{eqn:expdim-beta-sigma-f-a0} \\
        \beta {*} \sigma_f(p) &= 
        \begin{cases}
            \frac12\lrp{p-3} \qquad\qquad\qquad & \text{if } \alpha=1 \\
            p-2 & \text{if } \alpha \geq 2 
        \end{cases} 
        \qquad\qquad\qquad\qquad\qquad\!\! \text{for } \alpha \geq 1 \label{eqn:expdim-beta-sigma-f-a1-p} \\
        \beta {*} \sigma_f(p^r) &=
        \begin{cases}
            0 & \text{if } r\geq\alpha+1,\, r+\alpha \text{ odd} \\ 
            \frac12 \lrp{1-\frac2p+\frac{1}{p^2}} p^{(r+\alpha)/2} & \text{if } r \geq \alpha+2,\, r+\alpha \text{ even} \\
            \frac12 \lrp{1-\frac3p+\frac{2}{p^2}} p^{r} & \text{if } r=\alpha  \\
            \lrp{1-\frac2p+\frac{1}{p^2}} p^r & \text{if } r < \alpha.
        \end{cases}
        \ \  \text{for } \alpha \geq 1,\, r \geq 2   \label{eqn:expdim-beta-sigma-f-a1-pr} 
    \end{align}
\end{formula}

Next, we compute $\rho$, $\rho_f$, and $\beta {*} \rho_f$. A proof of this formula is given in the appendix.

\begin{formula} \label{form:beta-conv-rho}
    For $p$ prime, let $\alpha := v_p(f)$, and let $\chi_{p^\alpha}$ be defined as in Lemma \ref{lem:dim-linear-comb-mult}. If $\lrp{\frac{-3}{p}} = 1$, then let $u$ denote a square root of $-3$ modulo $p^\alpha$. Then assuming $r \geq \alpha$, $\rho$ is the multiplicative function defined for $r \geq 1$ by
    \begin{align}
        \rho(p^r) &= 
        \begin{cases}
            0  & \text{if } p=2 \\
            1  & \text{if } p=3;\ r=1 \\
            0  & \text{if } p=3;\ r\geq2 \\
            0  & \text{if }  p\neq2,3;\  \lrp{\frac{-3}{p}} = -1 \\
            2  & \text{if } p\neq2,3;\  \lrp{\frac{-3}{p}} = 1;\  \chi_{p^\alpha}\lrp{\frac{-1+u}{2}} = 1 \\
            -1  & \text{if } p\neq2,3;\  \lrp{\frac{-3}{p}} = 1;\  \chi_{p^\alpha}\lrp{\frac{-1+u}{2}} = \frac{-1 \pm \sqrt{-3}}{2}. \\
        \end{cases} \label{eqn:expdim-rho}
    \end{align}
    Furthermore, if $\rho(f) \neq 0$, then $\rho_f$ and $\beta {*} \rho_f$ are the multiplicative functions defined for $r \geq 1$ by
    \begin{align}
        \rho_f(p^r) &= \begin{cases}
            1 & \text{ if } p=3;\ r=1;\ \alpha=0 \\
            1 & \text{ if }p\neq3;\ \alpha\geq1 \\
            2 & \text{ if } p\neq2,3;\ \lrp{\frac{-3}{p}}=1;\ \alpha=0 \\
            0 & \text{ otherwise}
        \end{cases}  \label{eqn:expdim-rho-f} \\
        \beta {*} \rho_f(p) &= \begin{cases}
            -1 & \text{ if } p=3;\ \alpha=0 \\
            -1 & \text{ if }  p\neq3;\ \alpha\geq1 \\
            0  & \text{ if }  p\neq2,3;\ \lrp{\frac{-3}{p}}=1;\ \alpha=0 \\
            -2 & \text{ otherwise}
        \end{cases} \label{eqn:expdim-beta-rho-f-p} \\
        \beta {*} \rho_f(p^2) &= \begin{cases}
            -1 & \text{ if } p=3;\ \alpha=0 \\
            0 & \text{ if } p\neq3;\ \alpha\geq1 \\
            -1 & \text{ if } p\neq2,3;\ \lrp{\frac{-3}{p}}=1;\ \alpha=0 \\
            1 & \text{ otherwise}
        \end{cases} \label{eqn:expdim-beta-rho-f-p2} \\
        \beta {*} \rho_f(p^r) &= \begin{cases}
            1 & \text{ if }  p=3;\ r=3;\ \alpha=0 \\
            0 & \text{ if }  r\geq3, \text{ otherwise}. 
        \end{cases} \label{eqn:expdim-beta-rho-f-pr}
    \end{align}
\end{formula}

We note that $\chi_{p^\alpha}$ here does not necessarily need to be computed explicitly. One can instead use the identity  $\chi_{p^\alpha}(x) = \chi(\hat{x})$ where $\hat{x}$ is the unique integer modulo $f$ determined by the congruences $\hat{x} \equiv x \mod p^\alpha$ and $\hat{x} \equiv 1 \mod q^{v_q(f)}$ for primes $q \mid f$, $q \neq p$ \cite[p.~71]{stein}.

Also, observe from \eqref{eqn:expdim-rho} that if $\rho(p^{\alpha}) = 0$, then $\rho(p^r) = 0$ for all $r \geq \alpha$. Thus if $\rho(f) = 0$, then $\rho(M) = 0$ as well for all $f \mid M \mid N$, and so $ 
    \sum_{f \mid M \mid N} \beta(N/M) \rho(M) = 0.
$
This means that even though we have not yet defined $\rho_f$ in the case when $\rho(f) = 0$, we can still use the formula 
$
    \sum_{f \mid M \mid N} \beta(N/M) \rho(M) = \rho(f) \cdot \beta {*} \rho_f(N/f).
$
Here, we just consider $\rho(f) \cdot \beta {*} \rho_f(N/f)$ to be $0$ if $\rho(f) = 0$.
The same observation applies to Formula \ref{form:beta-conv-rhopm} below as well.

Next, we compute $\rho'$, $\rho_f'$, and $\beta {*} \rho_f'$. A proof of this formula is given in the appendix.

\begin{formula} \label{form:beta-conv-rhopm}
    For $p$ prime, let $\alpha := v_p(f)$, and let $\chi_{p^\alpha}$ be defined as in Lemma \ref{lem:dim-linear-comb-mult}. If $\lrp{\frac{-1}{p}} = 1$, then let $u'$ denote a square root of $-1$ modulo $p^\alpha$. Then assuming $r \geq \alpha$, $\rho'$ is the multiplicative function defined for $r \geq 1$ by
    \begin{align}
        \rho'(p^r) &= 
        \begin{cases}
            1  & \text{if } p=2;\ r=1 \\
            0  & \text{if } p=2;\ r\geq2 \\
            0  & \text{if }  p\neq2;\  \lrp{\frac{-1}{p}} = -1 \\
            2 & \text{if } p\neq2;\  \lrp{\frac{-1}{p}} = 1;\ \chi_{p^\alpha}\lrp{u'} = 1 \\
            -2 & \text{if } p\neq2;\  \lrp{\frac{-1}{p}} = 1;\ \chi_{p^\alpha}\lrp{u'} = -1 \\
            0 & \text{if } p\neq2;\  \lrp{\frac{-1}{p}} = 1;\  \chi_{p^\alpha}\lrp{u'} = \pm i.
        \end{cases} \label{eqn:expdim-rhopm}
    \end{align}
    Furthermore, if $\rho'(f) \neq 0$, then $\rho_f'$ and $\beta {*} \rho_f'$ are the multiplicative functions defined for $r \geq 1$ by
    \begin{align}
        \rho_f'(p^r) &= \begin{cases}
            1 & \text{ if } p=2;\ r=1;\ \alpha=0 \\
            1 & \text{ if }p\neq2;\ \alpha\geq1 \\
            2 & \text{ if } p\neq2;\ \lrp{\frac{-1}{p}}=1;\ \alpha=0 \\
            0 & \text{ otherwise}
        \end{cases}  \label{eqn:expdim-rhopm-f} \\
        \beta {*} \rho_f'(p) &= \begin{cases}
            -1 & \text{ if } p=2;\ \alpha=0 \\
            -1 & \text{ if }  p\neq2;\ \alpha\geq1 \\
            0  & \text{ if }  p\neq2;\ \lrp{\frac{-1}{p}}=1;\ \alpha=0 \\
            -2 & \text{ otherwise}
        \end{cases} \label{eqn:expdim-beta-rhopm-f-p} \\
        \beta {*} \rho_f'(p^2) &= \begin{cases}
            -1 & \text{ if } p=2;\ \alpha=0 \\
            0 & \text{ if } p\neq2;\ \alpha\geq1 \\
            -1 & \text{ if } p\neq2;\ \lrp{\frac{-1}{p}}=1;\ \alpha=0 \\
            1 & \text{ otherwise}.
        \end{cases} \label{eqn:expdim-beta-rhopm-f-p2} \\
        \beta {*} \rho_f'(p^r) &= \begin{cases}
            1 & \text{ if }  p=2;\ r=3;\ \alpha=0 \\
            0 & \text{ if }  r\geq3, \text{ otherwise}. 
        \end{cases} \label{eqn:expdim-beta-rhopm-f-pr}
    \end{align}
\end{formula}

Finally, for the constant function $\mathbf{1}$, 
we compute $\beta {*} \mathbf{1}_f$. One can obtain immediately from Lemma \ref{lem:beta-convolution} that
\begin{align} \label{eqn:expdim-beta-one-f}
    \beta {*} \mathbf{1}_f(p^r) &=  \begin{cases}
        -1 & \text{if } r=1 \\
        0  & \text{if } r\geq2.
    \end{cases} 
\end{align}
We then recognize that $\beta {*} \mathbf{1}_f = \mu$, the Mobius function.

We can then apply Lemma \ref{lem:partial-convolution} and Formulas \ref{form:beta-convolute-psif} - \ref{form:beta-conv-rhopm} to \eqref{eqn:newspace-dim-sum-mult} and obtain an explicit dimension formula for $S_k^\nw(\Gamma_0(N),\chi)$.
To simplify this formula slightly, we use the facts that $\sigma(f) = 2^{\omega(f)}$ from \eqref{eqn:expdim-sigma} and $\beta {*} \mathbf{1}_f(N/f) = \mu(N/f)$ from \eqref{eqn:expdim-beta-one-f}.

{
\renewcommand{\thetheorem}{\ref{thm:explicit-dim-formula}}
\begin{theorem} 
    Let $N\geq1$, $k\geq2$, and $\chi$ be a Dirichlet character modulo $N$ of conductor $f=\condf(\chi)$ with $\chi(-1) = (-1)^k$. Then 
    \begin{align} 
        \dim S_k^\nw(\Gamma_0(N),\chi) &= \frac{k-1}{12} \cdot \psi(f) \cdot \beta {*} \psi_f(N/f)   - c_3(k) \cdot \rho(f) \cdot \beta {*} \rho_f(N/f)  \\
        & \mspace{20mu} - c_4(k) \cdot \rho'(f) \cdot \beta {*} \rho_f'(N/f)  - \frac{1}{2} \cdot 2^{\omega(f)} \cdot \beta{*}\sigma_f(N/f) + c_0(k,\chi) \cdot  \mu(N/f).
    \end{align}
    Here,
    \begin{align}
        c_3(k) &= \frac{k-1}{3} - \lrfloor{\frac{k}{3}}, \\
        c_4(k) &= \frac{k-1}{4} - \lrfloor{\frac{k}{4}}, \\
        c_0(k,\chi) &= 
        \begin{cases} 
            1 & \text{if $k=2$ and $\chi$ is trivial} \\
            0 & \text{otherwise},
        \end{cases} \\
        \omega(f) \ &\text{denotes the number of distinct prime divisors of $f$}, \\
        \mu \ &\text{denotes the Mobius function},
    \end{align}
    and $\psi$, $\beta{*}\psi_f$, $\rho$, $\beta{*}\rho_f$,  $\rho'$, $\beta{*}\rho_f'$, $\sigma$, and $\beta{*}\sigma_f$ are the multiplicative functions with explicit formulas given in Formulas \ref{form:beta-convolute-psif} - \ref{form:beta-conv-rhopm}.
\end{theorem}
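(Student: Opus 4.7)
The plan is to reduce the theorem to a direct computation by combining the classical newspace formula with the linear decomposition of $\dim S_k(\Gamma_0(M),\chi)$ into multiplicative pieces, and then evaluating each resulting partial Dirichlet convolution via the machinery of Section \ref{sec:partial-convolutions}.

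First, I would substitute the expression from Lemma \ref{lem:dim-linear-comb-mult} into Formula \ref{form:newspace-dim}, obtaining \eqref{eqn:newspace-dim-sum-mult}, and then distribute the sum $\sum_{f \mid M \mid N} \beta(N/M)(\cdot)$ across the five terms. This produces five partial Dirichlet convolutions of $\beta$ with the multiplicative functions $\psi$, $\rho$, $\rho'$, $\sigma$, and the constant function $\mathbf{1}$ (coming from the $c_0(k,\chi)$ term). The coefficients $\frac{k-1}{12}$, $-c_3(k)$, $-c_4(k)$, $-\frac12$, and $c_0(k,\chi)$ factor out because they depend only on $k$ and $\chi$, not on $M$.

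Next, I would apply Lemma \ref{lem:partial-convolution} to each of these five partial convolutions. For $\theta \in \{\psi, \rho, \rho', \sigma\}$ with $\theta(f) \neq 0$, this gives
\[
    \sum_{f \mid M \mid N} \beta(N/M)\,\theta(M) \;=\; \theta(f)\cdot \beta{*}\theta_f(N/f).
\]
For the constant term, the same lemma (with $\theta = \mathbf{1}$, so $\theta(f)=1$) gives $\beta{*}\mathbf{1}_f(N/f)$, which by \eqref{eqn:expdim-beta-one-f} equals $\mu(N/f)$. The simplification $\sigma(f) = 2^{\omega(f)}$ follows by setting $r = \alpha$ in \eqref{eqn:expdim-sigma} and taking the product over $p \mid f$. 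Putting these pieces together yields exactly the stated formula, with the multiplicative functions $\beta{*}\psi_f$, $\beta{*}\rho_f$, $\beta{*}\rho_f'$, and $\beta{*}\sigma_f$ having the explicit prime-power values given in Formulas \ref{form:beta-convolute-psif}--\ref{form:beta-conv-rhopm}.

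The only subtle point, and what I would flag as the main obstacle to a fully rigorous write-up, is the edge case where $\rho(f) = 0$ or $\rho'(f) = 0$, since Lemma \ref{lem:partial-convolution} formally requires $\theta(f) \neq 0$ in order to define $\theta_f$. I would resolve this exactly as suggested in the remark following Lemma \ref{lem:partial-convolution}: observe from \eqref{eqn:expdim-rho} and \eqref{eqn:expdim-rhopm} that if $\rho(p^\alpha) = 0$ (respectively $\rho'(p^\alpha)=0$) then $\rho(p^r) = 0$ (respectively $\rho'(p^r)=0$) for every $r \geq \alpha$, so if $\rho(f) = 0$ then $\rho(M)=0$ for all $f \mid M \mid N$, forcing the corresponding partial convolution to vanish. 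Adopting the convention that the product $\rho(f)\cdot \beta{*}\rho_f(N/f)$ is interpreted as $0$ in this degenerate case (and similarly for $\rho'$), the identity holds uniformly, and the theorem follows.
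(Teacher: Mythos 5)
Your proposal is correct and follows essentially the same route as the paper: substitute Lemma \ref{lem:dim-linear-comb-mult} into Formula \ref{form:newspace-dim}, apply Lemma \ref{lem:partial-convolution} to each of the five resulting partial convolutions, and simplify via $\sigma(f)=2^{\omega(f)}$ and $\beta{*}\mathbf{1}_f=\mu$. Your handling of the degenerate case $\rho(f)=0$ or $\rho'(f)=0$ matches the convention the paper adopts in the remark following Formula \ref{form:beta-conv-rho}, so no gap remains.
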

\addtocounter{theorem}{-1}
}

We also implemented and verified this dimension formula against the spaces $S_k^\nw(\Gamma_0(N),\chi)$ computed natively in Sage; see \cite{ross-code} for the code.

To judge size, one can determine the asymptotic behavior in $N$  for each of the terms in this dimension formula. In particular, we have directly from Formulas \ref{form:beta-convolute-psif} - \ref{form:beta-conv-rhopm} that
\begin{equation} \label{eqn:expdim-terms-asymptotics}
\begin{aligned}
    \psi(f) \cdot \beta {*} \psi_f(N/f) &= O(N) \qquad\qquad 
    & \psi(f) \cdot \beta {*} \psi_f(N/f) &= \Omega(N^{1-\varepsilon}) \\
    \rho(f) \cdot \beta{*}\rho_f(N/f) &= O(N^\varepsilon) 
    & \rho'(f) \cdot \beta {*} \rho_f'(N/f) &= O(N^\varepsilon) \\
    2^{\omega(f)} \cdot \beta {*} \sigma_f(N/f) &= O(\sqrt{N})
    & \mu(N/f) &= O(1),
\end{aligned}
\end{equation}
for any $\varepsilon > 0$. These bounds are computed more explicitly in the proof of Theorem \ref{thm:newspace-small}.


\section{When \texorpdfstring{$\dim S_k^\nw(\Gamma_0(N),\chi)$}{dim S\_k\^new(Gamma0(N),chi)} is small}    \label{sec:when-dim-newspace-small}

With this explicit dimension formula, it is now possible to classify small newspaces.
We first show that $S_k^\nw(\Gamma_0(N), \chi)$ is trivial for the infinite family of triples $(N,k,\chi)$ where $2 \mid f$ and $2 \midmid N/f$.

\begin{proposition} \label{prop:infinite-family-trivial-newspace}
    Let $N \geq 1$, $k \geq 2$, and $\chi$ be a Dirichlet character modulo $N$ of conductor $f = \condf(\chi)$ such that $\chi(-1) = (-1)^k$. Then if $2 \mid f$ and $2 \midmid N/f$, we have $\dim S_k^\nw(\Gamma_0(N), \chi) = 0$.
\end{proposition}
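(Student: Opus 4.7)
My plan is to invoke the explicit dimension formula of Theorem~\ref{thm:explicit-dim-formula} and verify that each of its five terms vanishes under the given hypotheses $2 \mid f$ and $2 \midmid N/f$.

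The key preliminary I would establish is that the assumption $2 \mid f = \condf(\chi)$ actually forces $4 \mid f$. Indeed, there is no primitive Dirichlet character of conductor~$2$: the group $(\ZZ/2\ZZ)^\times$ is trivial, so every character modulo~$2$ is the trivial character and hence has conductor~$1$. Setting $\alpha := v_2(f)$, this gives $\alpha \geq 2$. This strict inequality matters because without it, the $\rho'$-term and the $\sigma$-term in the dimension formula would not individually vanish, and the proposition would require a cancellation argument.

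Next, since $\beta {*} \psi_f$, $\rho$, $\rho'$, $\beta {*} \sigma_f$, and $\mu$ are all multiplicative, and since $v_2(N/f) = 1$ by hypothesis, every term in the dimension formula factors through a local factor at $p=2$ with exponent $r=1$, so the proof reduces to checking that each such local factor is zero. This is simply bookkeeping on Formulas~\ref{form:beta-convolute-psif}--\ref{form:beta-conv-rhopm}: (i) $\beta {*} \psi_f(2)$ in the case $\alpha \geq 1$ equals $p - 2 = 0$ at $p = 2$; (ii) $\rho(2^\alpha) = 0$ from the $p = 2$ case of \eqref{eqn:expdim-rho}, so $\rho(f) = 0$; (iii) since $\alpha \geq 2$, $\rho'(2^\alpha) = 0$ from \eqref{eqn:expdim-rhopm}, so $\rho'(f) = 0$; (iv) $\beta {*} \sigma_f(2)$ in the case $\alpha \geq 2$ equals $p - 2 = 0$ at $p = 2$; and (v) $c_0(k,\chi) = 0$ since $\chi$ is nontrivial (as $f \geq 4$). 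With all five terms vanishing, $\dim S_k^\nw(\Gamma_0(N),\chi) = 0$.

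The main conceptual obstacle is the conductor observation $\alpha \geq 2$; once it is in hand, the rest is simply a matter of reading off values from the explicit formulas. The heavy lifting was really done in constructing the explicit dimension formula itself.
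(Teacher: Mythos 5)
Your proposal is correct and follows essentially the same route as the paper's proof: both establish $4 \mid f$ (hence $\alpha = v_2(f) \geq 2$) from the nonexistence of primitive characters modulo $2$, note $c_0(k,\chi)=0$ since $\chi$ is nontrivial, and then read off from Formulas \ref{form:beta-convolute-psif}--\ref{form:beta-conv-rhopm} that the local factors $\beta{*}\psi_f(2)$, $\beta{*}\sigma_f(2)$, $\rho(2^\alpha)$, and $\rho'(2^\alpha)$ all vanish, killing every term of Theorem \ref{thm:explicit-dim-formula}. Your added remark that $\alpha\geq 2$ is genuinely needed (since $\alpha=1$ would leave $\beta{*}\sigma_f(2)$ and $\rho'(2)$ nonzero) is a nice touch but does not change the argument.
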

\begin{proof}
    First, $2 \mid f$ means that $\chi$ is not trivial, so $c_0(k,\chi) = 0$.
    
    Additionally, observe that $2 \mid f$ implies $4 \mid f$ as well. Because if $2 \midmid f$, then the factorization of $\chi$ (considered as a character modulo $f$) would include a primitive character modulo $2$ \cite[p.~72]{stein}. And this is not possible since there do not exist any primitive characters modulo $2$. 

    Thus for $p=2$, we have $\alpha = v_2(f) \geq 2$.
    So by \eqref{eqn:expdim-beta-psi-f}, \eqref{eqn:expdim-beta-sigma-f-a1-p}, \eqref{eqn:expdim-rho}, and \eqref{eqn:expdim-rhopm},
    \begin{align}
        \beta{*}\psi_f(2) = \beta{*}\sigma_f(2) = 0 \qquad \text{and} \qquad
        \rho(2^\alpha) = \rho'(2^\alpha) = 0.
    \end{align}
    This means that 
    \begin{align}
        \beta{*}\psi_f(N/f) = \beta{*}\sigma_f(N/f) = 0 \qquad \text{and} \qquad
        \rho(f) = \rho'(f) = 0
    \end{align}
    as well,
    since $2 \midmid N/f$ and $2^\alpha \midmid f$. 
    Thus all the terms in the dimension formula from Theorem \ref{thm:explicit-dim-formula} vanish, and so $\dim S_k^\nw(\Gamma_0(N), \chi) = 0$.
\end{proof}

Next, we show that excluding the infinite family given in Proposition \ref{prop:infinite-family-trivial-newspace}, $\dim S_k^\nw(\Gamma_0(N),\chi)$ will be small for only finitely many triples $(N,k,\chi)$.

{
\renewcommand{\thetheorem}{\ref{thm:newspace-small}}
\begin{theorem}
    Consider $N \geq 1$, $k \geq 2$, and $\chi$ a Dirichlet character modulo $N$ of conductor $f=\condf(\chi)$ with $\chi(-1) = (-1)^k$. Also assume it is not the case that $2 \mid f$ and $2 \midmid N/f$. Then for all bounds $B \geq 0$,  $\dim S_k^\nw(\Gamma_0(N),\chi) \leq B$ for only finitely many triples $(N,k,\chi)$.
\end{theorem}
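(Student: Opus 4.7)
My plan is to mirror the argument for Theorem \ref{thm:fullspace-dim}, with the explicit formula of Theorem \ref{thm:explicit-dim-formula} replacing Formula \ref{form:fullspace-dim}. I designate $T_{\text{main}}:=\tfrac{k-1}{12}\,\psi(f)\,\beta{*}\psi_f(N/f)$ as the main term and treat the other four summands as errors. From \eqref{eqn:expdim-terms-asymptotics} the errors are $O(N^{1/2+\varepsilon})$, so once I show $T_{\text{main}}\gg (k-1)\,N^{1-\varepsilon}$ under the hypothesis, the proof of Theorem \ref{thm:fullspace-dim} applies almost verbatim: first make $N$ large enough to swamp all the error terms, then for each of the finitely many remaining $N$ make $k$ large.

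The decisive step is a positive lower bound on $\psi(f)\,\beta{*}\psi_f(N/f)$. Inspecting Formula \ref{form:beta-convolute-psif}, the \emph{only} prime power on which $\beta{*}\psi_f$ vanishes is $\beta{*}\psi_f(2)=0$ in the case $\alpha:=v_2(f)\ge 1$---which is precisely the excluded family $2\mid f$ and $2\midmid N/f$. So under our hypothesis $\beta{*}\psi_f(p^{r})>0$ at every prime power $p^{r}\midmid N/f$, and a case split on \eqref{eqn:expdim-beta-psi-f} combined with $\psi(p^\alpha)=p^\alpha(1+1/p)$ produces a factor $d_p\,p^{v_p(N)}$ at each $p\mid N$, where $d_p\ge 1-2/p$ for $p\ge 5$ and $d_p$ is bounded below by an explicit positive constant (e.g.\ $d_2\ge 1/4$ in the worst allowed subcase) at each small prime. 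Multiplying over $p\mid N$ and combining with a Mertens-type estimate of the form $\prod_{p\mid N}(1-2/p)\gg N^{-\varepsilon}$ gives $\psi(f)\,\beta{*}\psi_f(N/f)\gg N^{1-\varepsilon}$, hence $T_{\text{main}}\gg (k-1)\,N^{1-\varepsilon}$ uniformly in $k$ and $\chi$.

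For the errors, Formulas \ref{form:beta-conv-rho} and \ref{form:beta-conv-rhopm} give $|\rho(p^r)|,\,|\rho'(p^r)|\le 2$ at each prime power, so multiplicativity yields $|\rho(f)\,\beta{*}\rho_f(N/f)|,\,|\rho'(f)\,\beta{*}\rho_f'(N/f)|\ll 2^{\omega(N)}$; Formula \ref{form:beta-conv-sigma}, combined with an argument parallel to Lemma \ref{lem:phi-sum}, yields $|2^{\omega(f)}\,\beta{*}\sigma_f(N/f)|\ll 2^{\omega(N)}\sqrt{N}$; and $|\mu(N/f)|\le 1$. Invoking Lemma \ref{lem:omega-sigma0-pi-bounds} to bound $2^{\omega(N)}\ll N^{1/4}$, every error term is $O(N^{3/4})$, hence dwarfed by $T_{\text{main}}\gg N^{1-\varepsilon}$ for $\varepsilon<1/4$ and $N$ large. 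The argument then closes exactly as in Theorem \ref{thm:fullspace-dim}: first $\dim S_k^\nw(\Gamma_0(N),\chi)>B$ for all sufficiently large $N$ independently of $(k,\chi)$, and then for each of the finitely many remaining $N$ the linear growth of $T_{\text{main}}$ in $k$ against $k$-bounded errors handles large $k$. The main obstacle I anticipate is making the lower bound on $\beta{*}\psi_f(N/f)$ cleanly quantitative: this is the unique place where the hypothesis is essential, and the small-prime case analysis (especially at $p=2$) must be tracked carefully rather than dismissed with a single inequality.
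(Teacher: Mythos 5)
Your proposal is correct and follows essentially the same route as the paper: isolate $\frac{k-1}{12}\psi(f)\,\beta{*}\psi_f(N/f)$ as the main term, observe that the only vanishing prime-power value of $\beta{*}\psi_f$ is the case $p=2$, $r=1$, $v_2(f)\ge 1$ (exactly the excluded family), bound it below by $N$ times a product of local factors $\ge 1-2/p$ (resp.\ $1/4$ at $p=2$), which is what the paper packages as $N/\nu(N)\gg N^{1-\varepsilon}$, and then dominate the $\rho$, $\rho'$, $\sigma$, and $\mu$ terms by $O(N^{3/4})$ before closing with the same two-stage (large $N$, then large $k$) argument as in Theorem \ref{thm:fullspace-dim}. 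The only differences are cosmetic: the paper makes the local factors explicit via the function $\nu$ and Lemma \ref{lem:omega-sigma0-pi-bounds} so as to extract the numerical threshold needed for the tables, whereas you invoke a Mertens-type bound, which suffices for the finiteness statement itself.
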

\addtocounter{theorem}{-1}
}

\begin{proof}
    From Theorem \ref{thm:explicit-dim-formula}, we have
    \begin{align} 
        \dim S_k^\nw(\Gamma_0(N),\chi) &= \frac{k-1}{12} \cdot \psi(f) \cdot \beta {*} \psi_f(N/f)   - c_3(k) \cdot \rho(f) \cdot \beta {*} \rho_f(N/f)  \label{eqn:newspace-dim-temp2} \\
        & \ \  - c_4(k) \cdot \rho'(f) \cdot \beta {*} \rho_f'(N/f)  - \frac{1}{2} \cdot 2^{\omega(f)} \cdot \beta{*}\sigma_f(N/f) + c_0(k,\chi) \cdot \mu(N/f).
    \end{align}
    We estimate each of the terms in \eqref{eqn:newspace-dim-temp2}.

    First, we show that $\psi(f) \cdot \beta {*} \psi_f(N/f) \geq N / \nu(N)$, where $\nu$ is the multiplicative function
    \begin{align}
        \nu(N) &:= \prod_{p\mid N} \begin{cases} 
            4 & \text{if } p=2 \\
            \lrp{1+\frac{2}{p-2}}  & \text{if } p \neq 2.
        \end{cases}
    \end{align}
    For $p\neq 2$, in every case of \eqref{eqn:expdim-beta-psi-f}, we have
    \begin{align}
        \beta {*} \psi_f(p^r) &\geq p^r \lrp{1 - \frac{2}{p}} = \frac{p^r}{\lrp{1+\frac{2}{p-2}}}.
    \end{align}
    For $p=2$, observe that the fourth case of \eqref{eqn:expdim-beta-psi-f} occurs in $\beta {*} \psi_f(N/f)$ precisely when $2 \mid f$ and $2 \midmid N/f$. So ignoring this case, we have $\beta {*} \psi_f(2^r) \geq \frac14 2^r$. 
    This gives $\beta {*} \psi_f(p^r) \geq  p^r / \nu(p^r)$ for all $p$, which then yields
    \begin{align}
        \psi(f) \cdot \beta {*} \psi_f(N/f) &\geq \psi(f) \cdot \frac{N/f}{\nu(N/f)} \geq f \cdot \frac{N/f}{\nu(N/f)} \geq \frac{N}{\nu(N)}, \label{eqn:psi-bound-temp}
    \end{align}
    as desired.

    Second, observe from \eqref{eqn:expdim-beta-sigma-f-a0}, \eqref{eqn:expdim-beta-sigma-f-a1-p}, and \eqref{eqn:expdim-beta-sigma-f-a1-pr} that
    \begin{align}
        \lrabs{\beta {*} \sigma_f(p^r)} \leq \begin{cases}
            p^{r/2} & \text{if } \alpha=0 \\
            \frac12 p^{(r+\alpha)/2} & \text{if } \alpha\geq1.
        \end{cases}
    \end{align}
    This then yields
    \begin{align}
        \lrabs{2^{\omega(f)} \cdot \beta{*}\sigma_f(N/f)}
        &\leq 2^{\omega(f)} \cdot \sqrt{N/f} \cdot \prod_{p \mid N/f,\ p \mid f} \frac12  p^{v_p(f)/2} \\
        &= \sqrt{N/f} \cdot \prod_{p \mid N/f,\ p \mid f}  p^{v_p(f)/2} \cdot \prod_{p \mid f,\ p \nmid N/f} 2 \\
        &= \sqrt{N} \cdot \prod_{p \mid f,\ p \nmid N/f} \frac{2}{p^{v_p(f)/2}} \\
        &\leq \sqrt{N} \cdot \frac{2}{\sqrt{2}} \cdot \frac{2}{\sqrt{3}} \\
        &= \frac{2 \sqrt{6}}{3} \sqrt{N}. \label{eqn:sigma-bound-temp}
    \end{align}

    Third, observe from \eqref{eqn:expdim-rho} that $\lrabs{\rho(f)} \leq 2^{\omega(f)}$. Also note from \eqref{eqn:expdim-beta-rho-f-p}, \eqref{eqn:expdim-beta-rho-f-p2}, and \eqref{eqn:expdim-beta-rho-f-pr} that 
    \begin{align}
        \lrabs{\beta {*} \rho_f(p^r)} \leq \begin{cases}
            1 & \text{if } p\neq3,\ \alpha\geq1 \\
            2 & \text{otherwise}. 
        \end{cases}
    \end{align}
    This yields 
    \begin{align}
        \lrabs{\rho(f) \cdot \beta {*} \rho_f(N/f)} 
        &\leq 2^{\omega(f)} \cdot 2 \cdot 2^{\#\{ p \mid N/f \colon p \nmid f\}} \\
        &\leq 2 \cdot 2^{\omega(f)}  2^{\#\{ p \mid N \colon p \nmid f\}} \\
        &= 2 \cdot 2^{\omega(N)}. \label{eqn:rho-bound-temp}
    \end{align}
    An identical argument shows that 
    \begin{align}
        \lrabs{\rho'(f) \cdot \beta {*} \rho_f'(N/f)} 
        &\leq 2 \cdot 2^{\omega(N)} \label{eqn:rhopm-bound-temp}
    \end{align}
    as well.

    Finally, recall that 
    \begin{align} \label{eqn:mobius-bound-temp}
        \lrabs{\mu(N/f)} \leq 1. 
    \end{align}

    Then combining \eqref{eqn:psi-bound-temp}, \eqref{eqn:sigma-bound-temp}, \eqref{eqn:rho-bound-temp}, \eqref{eqn:rhopm-bound-temp}, and \eqref{eqn:mobius-bound-temp}, we obtain
    \begin{align} 
        \dim S_k^\nw(\Gamma_0(N),\chi) - B 
        &= \frac{k-1}{12} \cdot \psi(f) \cdot \beta {*} \psi_f(N/f)   - c_3(k) \cdot \rho(f) \cdot \beta {*} \rho_f(N/f)  \\
        & \ \ \ - c_4(k) \cdot \rho'(f) \cdot \beta {*} \rho_f'(N/f)  - \frac{1}{2} \cdot 2^{\omega(f)} \cdot \beta{*}\sigma_f(N/f) \\
        & \ \ \  + c_0(k,\chi) \cdot \mu(N/f) - B \\
        &\geq \frac{k-1}{12} \cdot \frac{N}{\nu(N)} - \frac13 \cdot 2 \cdot 2^{\omega(N)} \\
        &\qquad - \frac12 \cdot 2\cdot 2^{\omega(N)} - \frac{1}{2} \cdot \frac{2\sqrt{6}}{3} \sqrt{N} - 1 \cdot 1 - B \\
        &= \frac{k-1}{12} \cdot \frac{N}{\nu(N)} - \frac{5}{3} \cdot 2^{\omega(N)} - \frac{\sqrt{6}}{3} \cdot \sqrt{N} - 1 - B \\
        &= \frac{N}{\nu(N)} \lrb{ 
            \frac{k-1}{12} 
            - E(N)
        },  \label{eqn:newspace-dim-bound}
    \end{align}
    where
    \begin{align} \label{eqn:E-N-equation}
        E(N) = \frac53 \cdot \frac{\nu(N) 2^{\omega(N)}}{N} 
            + \frac{\sqrt{6}}{3} \cdot \frac{\nu(N)}{\sqrt{N}}
            + (B+1) \cdot \frac{\nu(N)}{N} .
    \end{align}

    Now, $\nu(N),\  2^{\omega(N)} = O(N^{\varepsilon})$ 
    for any $\varepsilon > 0$, so $
         E(N) = O(N^{-1/2+\varepsilon})
    $.
    This means that for $N$ sufficiently large, we will have $E(N) < \frac{1}{12}$, and hence by
    \eqref{eqn:newspace-dim-bound}, 
    \begin{align}
        \dim S_k^\nw(\Gamma_0(N), \chi) - B > \frac{N}{\nu(N)} \lrb{\frac{k-1}{12} - \frac{1}{12}} \geq 0.
    \end{align}
    Then for each of the finitely many remaining values of $N$, $\frac{k-1}{12}$ will be $ > E(N)$ for sufficiently large $k$. 
    Thus $\dim S_k^\nw(\Gamma_0(N), \chi) \leq B$ for only finitely many triples $(N,k,\chi)$.
\end{proof}

By computing the bounds given in Theorem \ref{thm:newspace-small}, we also give the complete list of triples $(N,k,\chi)$ for which $\dim S_k^\nw(\Gamma_0(N), \chi) \leq 1$. To find $N$ large enough such that $E(N) < \frac{1}{12}$, we use the explicit bounds for $\nu(N)$ and $2^{\omega(N)}$ given in Lemma \ref{lem:omega-sigma0-pi-bounds}.
This yields
\begin{align} \label{eqn:E-N-bounds}
    E(N) &= \frac53 \cdot \frac{\nu(N) 2^{\omega(N)}}{N} 
        + \frac{\sqrt{6}}{3} \cdot \frac{\nu(N)}{\sqrt{N}}
        + (1+1) \cdot \frac{\nu(N)}{N} \\
        &\leq \frac53 \cdot \frac{21.234\cdot N^{1/16} \cdot 4.862\cdot N^{1/4}}{N} 
        + \frac{\sqrt{6}}{3} \cdot \frac{21.234\cdot N^{1/16}}{\sqrt{N}}
        + 2 \cdot \frac{21.234\cdot N^{1/16}}{N},
\end{align}
which will be $< \frac{1}{12}$ for all $N \geq 424094$.
Then we just need to check all the $N < 424094$ by computer; see \cite{ross-code} for the code. This yields the complete list given in Tables
 \ref{table:newspace-dim0} and \ref{table:newspace-dim1}. Along with Proposition \ref{prop:infinite-family-trivial-newspace}, this also completes the proof of Theorem \ref{thm:newspace-trivial}.

\begin{mytable} \label{table:newspace-dim0}
\begin{equation}
\setlength{\arraycolsep}{1mm}
\begin{array}{|c|c|c|c|c|c|c|c|c|}
\hline
\multicolumn{9}{|c|}{\makecell{
    \text{All triples $(N,k,\chi)$ for which $\dim S_k^\nw(\Gamma_0(N),\chi) = 0$, excluding
    the $(N,k,\chi)$ where} \\ 
    \text{$2 \mid f$ and $2 \midmid N/f$. The characters $\chi$ here are identified by their Conrey label.}
  }} \\
\hline
(1, 2, 1) & (1, 4, 1) & (1, 6, 1) & (1, 8, 1) & (1, 10, 1) & (1, 14, 1) & (2, 2, 1) & (2, 4, 1) & (2, 6, 1) \\
\hline
(2, 12, 1) & (3, 2, 1) & (3, 3, 2) & (3, 4, 1) & (3, 5, 2) & (4, 2, 1) & (4, 3, 3) & (4, 4, 1) & (4, 8, 1) \\
\hline
(5, 2, 1) & (5, 2, 4) & (5, 3, 2) & (5, 3, 3) & (5, 4, 4) & (6, 2, 1) & (6, 3, 5) & (7, 2, 1) & (7, 2, 2) \\
\hline
(7, 2, 4) & (7, 3, 3) & (7, 3, 5) & (8, 2, 1) & (8, 2, 5) & (9, 2, 1) & (9, 2, 4) & (9, 2, 7) & (9, 3, 8) \\
\hline
(10, 2, 1) & (10, 2, 9) & (11, 2, 3) & (11, 2, 4) & (11, 2, 5) & (11, 2, 9) & (12, 2, 1) & (12, 2, 11) & (12, 6, 1) \\
\hline
(13, 2, 1) & (13, 2, 3) & (13, 2, 9) & (13, 2, 12) & (14, 2, 9) & (14, 2, 11) & (14, 3, 13) & (15, 2, 2) & (15, 2, 4) \\
\hline
(15, 2, 8) & (16, 2, 1) & (17, 2, 4) & (17, 2, 13) & (17, 2, 16) & (18, 2, 1) & (18, 5, 17) & (19, 2, 7) & (19, 2, 11) \\
\hline
(20, 2, 9) & (21, 2, 20) & (22, 2, 1) & (25, 2, 1) & (25, 2, 24) & (26, 2, 17) & (26, 2, 23) & (27, 2, 10) & (27, 2, 19) \\
\hline
(28, 2, 1) & (32, 2, 17) & (36, 3, 17) & (52, 2, 25) & (60, 2, 1) & (90, 3, 71) & (126, 2, 125)  & \multicolumn{2}{c|}{~} \\
\hline
\end{array}
\end{equation}
\end{mytable}

\begin{mytable} \label{table:newspace-dim1}
\begin{equation}
\setlength{\arraycolsep}{1mm}
\begin{array}{|c|c|c|c|c|c|c|c|c|}
\hline
\multicolumn{9}{|c|}{\makecell{
    \text{All triples $(N,k,\chi)$ for which $\dim S_k^\nw(\Gamma_0(N),\chi) = 1$.} \\
    \text{The characters $\chi$ here are identified by their Conrey label.}
  }} \\
\hline
(1, 12, 1) & (1, 16, 1) & (1, 18, 1) & (1, 20, 1) & (1, 22, 1) & (1, 26, 1) & (2, 8, 1) & (2, 10, 1) & (2, 16, 1) \\
\hline
(2, 18, 1) & (2, 24, 1) & (3, 6, 1) & (3, 7, 2) & (3, 8, 1) & (3, 12, 1) & (4, 5, 3) & (4, 6, 1) & (4, 10, 1) \\
\hline
(4, 12, 1) & (4, 14, 1) & (4, 16, 1) & (4, 20, 1) & (5, 4, 1) & (5, 5, 2) & (5, 5, 3) & (5, 6, 1) & (6, 4, 1) \\
\hline
(6, 6, 1) & (6, 8, 1) & (6, 10, 1) & (6, 14, 1) & (7, 3, 6) & (7, 4, 1) & (7, 4, 2) & (7, 4, 4) & (7, 5, 6) \\
\hline
(8, 3, 3) & (8, 4, 1) & (8, 6, 1) & (9, 3, 2) & (9, 3, 5) & (9, 4, 1) & (9, 6, 1) & (10, 3, 3) & (10, 3, 7) \\
\hline
(10, 4, 1) & (10, 8, 1) & (11, 2, 1) & (11, 3, 2) & (11, 3, 6) & (11, 3, 7) & (11, 3, 8) & (11, 3, 10) & (12, 3, 5) \\
\hline
(12, 4, 1) & (12, 5, 5) & (12, 10, 1) & (13, 2, 4) & (13, 2, 10) & (13, 3, 2) & (13, 3, 6) & (13, 3, 7) & (13, 3, 11) \\
\hline
(14, 2, 1) & (15, 2, 1) & (16, 2, 5) & (16, 2, 13) & (16, 3, 15) & (16, 4, 1) & (17, 2, 1) & (17, 2, 2) & (17, 2, 8) \\
\hline
(17, 2, 9) & (17, 2, 15) & (18, 2, 7) & (18, 2, 13) & (18, 4, 1) & (19, 2, 1) & (19, 2, 4) & (19, 2, 5) & (19, 2, 6) \\
\hline
(19, 2, 9) & (19, 2, 16) & (19, 2, 17) & (20, 2, 1) & (20, 2, 3) & (20, 2, 7) & (20, 3, 13) & (20, 3, 17) & (20, 4, 1) \\
\hline
(20, 6, 1) & (21, 2, 1) & (21, 2, 4) & (21, 2, 5) & (21, 2, 16) & (21, 2, 17) & (22, 2, 3) & (22, 2, 5) & (22, 2, 9) \\
\hline
(22, 2, 15) & (23, 2, 2) & (23, 2, 3) & (23, 2, 4) & (23, 2, 6) & (23, 2, 8) & (23, 2, 9) & (23, 2, 12) & (23, 2, 13) \\
\hline
(23, 2, 16) & (23, 2, 18) & (24, 2, 1) & (24, 4, 1) & (25, 2, 6) & (25, 2, 11) & (25, 2, 16) & (25, 2, 21) & (26, 2, 3) \\
\hline
(26, 2, 9) & (26, 3, 5) & (26, 3, 21) & (27, 2, 1) & (27, 3, 8) & (27, 3, 17) & (28, 2, 9) & (28, 2, 25) & (28, 3, 5) \\
\hline
(28, 3, 17) & (29, 2, 7) & (29, 2, 16) & (29, 2, 20) & (29, 2, 23) & (29, 2, 24) & (29, 2, 25) & (30, 2, 1) & (31, 2, 2) \\
\hline
(31, 2, 4) & (31, 2, 8) & (31, 2, 16) & (32, 2, 1) & (32, 3, 15) & (33, 2, 1) & (34, 2, 1) & (34, 2, 9) & (34, 2, 15) \\
\hline
(34, 2, 19) & (34, 2, 25) & (36, 2, 1) & (36, 2, 13) & (36, 2, 25) & (36, 4, 1) & (37, 2, 10) & (37, 2, 26) & (38, 2, 5) \\
\hline
(38, 2, 9) & (38, 2, 17) & (38, 2, 23) & (38, 2, 25) & (38, 2, 35) & (39, 2, 4) & (39, 2, 10) & (40, 2, 1) & (42, 2, 1) \\
\hline
(44, 2, 1) & (44, 2, 5) & (44, 2, 9) & (44, 2, 25) & (44, 2, 37) & (45, 2, 1) & (46, 2, 1) & (48, 2, 1) & (49, 2, 1) \\
\hline
(49, 2, 18) & (49, 2, 30) & (52, 2, 1) & (52, 2, 17) & (52, 2, 49) & (54, 2, 19) & (54, 2, 37) & (64, 2, 1) & (64, 2, 17) \\
\hline
(64, 2, 49) & (68, 2, 9) & (68, 2, 25) & (68, 2, 49) & (68, 2, 53) & (70, 2, 1) & (72, 2, 1) & (76, 2, 1) & (76, 2, 45) \\
\hline
(76, 2, 49) & (78, 2, 1) & (84, 2, 25) & (84, 2, 37) & (100, 2, 1) & (108, 2, 1) & (108, 2, 37) & (108, 2, 73) & (156, 2, 61) \\
\hline
(156, 2, 133) & (180, 2, 1)  & \multicolumn{7}{c|}{~} \\
\hline
\end{array}
\end{equation}
\end{mytable}


 \section{Disproof of a conjecture from Martin} \label{sec:disproof-conj}

 In \cite[Conjecture 27]{martin}, Greg Martin conjectured that $\dim S_2^\nw(\Gamma_0(N))$ takes on all possible non-negative integers. We show that this is in fact not true. The smallest non-negative integer not achieved by $\dim S_2^\nw(\Gamma_0(N))$ is $67846$. In order to prove this fact, we use the same method and bounds as in \eqref{eqn:newspace-dim-bound} and \eqref{eqn:E-N-bounds}. 

 We take $B = 67846$. Then following \eqref{eqn:newspace-dim-bound} and \eqref{eqn:E-N-bounds}, 
 \begin{align}
    \dim S_2^\nw(\Gamma_0(N)) - 67846 \ge \frac{N}{\nu(N)} \lrb{\frac{k-1}{12} - E(N)}
\end{align}
where 
\begin{align} 
    E(N) &= \frac53 \cdot \frac{\nu(N) 2^{\omega(N)}}{N} 
        + \frac{\sqrt{6}}{3} \cdot \frac{\nu(N)}{\sqrt{N}}
        + (67846+1) \cdot \frac{\nu(N)}{N} \\
        &\leq \frac53 \cdot \frac{21.234\cdot N^{1/16} \cdot 4.862\cdot N^{1/4}}{N} 
        + \frac{\sqrt{6}}{3} \cdot \frac{21.234\cdot N^{1/16}}{\sqrt{N}}
        + 67847 \cdot \frac{21.234\cdot N^{1/16}}{N},
\end{align}
which will be $< \frac{1}{12}$ for all $N \geq 58260767$.
This means that for $N \geq 58260767$, we will have $\dim S_2^\nw(\Gamma_0(N)) > 67846$. Then checking all the $N < 58260767$ by computer shows that we never have $\dim S_2^\nw(\Gamma_0(N)) = 67846$; see \cite{ross-code} for the code. 

We observe here that this is in the trivial character case, so the tools in Martin's paper (e.g. \cite[Proposition 26]{martin}) are sufficient to prove this result. 
It appears the main difference is just that we are using slightly better bounds and computed up to larger values of $N$.

\section{A character equidistribution property} \label{sec:char-equid}

We also obtain another result for free from the explicit dimension formula given in Theorem \ref{thm:explicit-dim-formula}.  In \cite{kimball}, Kimball Martin considered the decomposition of $S_k^\nw(\Gamma_0(N))$ into $S_k^\nw(\Gamma_0(N)) = S_k^{\nw,+}(\Gamma_0(N)) \oplus S_k^{\nw,-}(\Gamma_0(N))$. He then used an explicit dimension formula for $S_k^{\nw,\pm}(\Gamma_0(N))$ to show the asymptotic equidistribution of the two terms in this direct sum \cite[Corollary 2.3]{kimball}.

In our case, recall that $S_k^\nw(\Gamma_1(N))$ can be decomposed as $S_k^\nw(\Gamma_1(N)) = \bigoplus_{\chi} S_k^\nw(\Gamma_0(N),\chi)$ \cite[Theorem 7.3.4]{cohen-stromberg}. 
We can then similarly use our explicit dimension formula for $S_k^\nw(\Gamma_0(N),\chi)$ to ask about character equidistribution in this direct sum.

For comparison, first consider the full space $S_k(\Gamma_1(N)) = \bigoplus_{\chi} S_k(\Gamma_0(N),\chi)$. Then in the notation of Lemma \ref{lem:dim-linear-comb-mult},
\begin{align}
    \dim S_k(\Gamma_0(N),\chi) = \frac{k-1}{12} \psi(N) - c_3(k) \rho(N) - c_4(k) \rho'(N) - \frac{1}{2} \sigma(N) + c_0(k,\chi).
\end{align}
Here, we have directly from Formulas \ref{form:beta-convolute-psif} - \ref{form:beta-conv-rhopm} that
\begin{equation} \label{eqn:fullspace-terms-asymptotics}
\begin{aligned}
    \psi(N) &\geq N \qquad\qquad 
    & \rho(N) &= O(N^\varepsilon) \\
    \rho'(N)  &= O(N^\varepsilon) 
    & \sigma(N) &= O(N^{1/2+\varepsilon})
\end{aligned}
\end{equation}
for any $\varepsilon > 0$.
Note that $\rho$, $\rho'$, and $\sigma$ depend on $\chi$, but the largest term $\psi$ does not. This immediately gives the following result.

{
\renewcommand{\thetheorem}{\ref{thm:fullspace-char-equid}}
\begin{theorem} 
    Consider $N \geq 1$, $k \geq 2$, and $\chi$ a Dirichlet character modulo $N$ with $\chi(-1) = (-1)^k$.
    Recall that $S_k(\Gamma_1(N)) = \bigoplus_{\chi} S_k(\Gamma_0(N),\chi)$.
    Then asymptotically in $N$, the terms in this direct sum are equidistributed by character. 
\end{theorem}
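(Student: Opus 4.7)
The plan is to use the decomposition of $\dim S_k(\Gamma_0(N),\chi)$ as a linear combination of multiplicative functions given in Lemma \ref{lem:dim-linear-comb-mult} and to observe that only one of these terms contributes a main term as $N \to \infty$. Specifically, we have
\begin{align}
\dim S_k(\Gamma_0(N),\chi) = \frac{k-1}{12}\psi(N) - c_3(k)\rho(N) - c_4(k)\rho'(N) - \tfrac{1}{2}\sigma(N) + c_0(k,\chi),
\end{align}
and among these five terms only $\frac{k-1}{12}\psi(N)$ is independent of $\chi$. So the first step is to isolate this main term.

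Next, I would quantify how much smaller the $\chi$-dependent terms are. The bounds \eqref{eqn:fullspace-terms-asymptotics} recorded just before the theorem give $\rho(N),\rho'(N) = O(N^\varepsilon)$, $\sigma(N) = O(N^{1/2+\varepsilon})$, and $c_0(k,\chi) = O(1)$, while $\psi(N) \geq N$. Combining these yields
\begin{align}
\dim S_k(\Gamma_0(N),\chi) = \tfrac{k-1}{12}\psi(N) + O_k\!\left(N^{1/2+\varepsilon}\right)
\end{align}
uniformly in the choice of character $\chi$ with $\chi(-1) = (-1)^k$.

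To conclude, I would make precise the asymptotic equidistribution claim by showing that for any two characters $\chi_1,\chi_2$ modulo $N$ of the correct parity,
\begin{align}
\frac{\dim S_k(\Gamma_0(N),\chi_1)}{\dim S_k(\Gamma_0(N),\chi_2)} = \frac{\frac{k-1}{12}\psi(N) + O_k(N^{1/2+\varepsilon})}{\frac{k-1}{12}\psi(N) + O_k(N^{1/2+\varepsilon})} \longrightarrow 1 \qquad \text{as } N \to \infty,
\end{align}
since $\psi(N) \geq N$ dominates the error. Equivalently, each term in the direct sum has dimension $(1 + o(1)) \cdot \frac{k-1}{12}\psi(N)$, which is the common main term shared by all characters.

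The proof itself is essentially immediate from the existing machinery, so there is no serious obstacle; the only real work lies in fixing a precise definition of ``asymptotic equidistribution by character'' (which Section \ref{sec:char-equid} promises to do) and verifying that the error term above is genuinely uniform in $\chi$. The latter is clear because the big-$O$ bounds on $\rho$, $\rho'$, $\sigma$, and $c_0$ in \eqref{eqn:fullspace-terms-asymptotics} depend only on $N$ and $k$, not on $\chi$.
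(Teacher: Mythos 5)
Your proposal is correct and follows essentially the same route as the paper: isolate the $\chi$-independent main term $\frac{k-1}{12}\psi(N)$ from Lemma \ref{lem:dim-linear-comb-mult} and use the bounds $\rho(N),\rho'(N)=O(N^\varepsilon)$, $\sigma(N)=O(N^{1/2+\varepsilon})$, $\psi(N)\geq N$ to show the $\chi$-dependent terms are asymptotically negligible, uniformly in $\chi$. The only cosmetic difference is that the paper phrases equidistribution via $\lrabs{s(N,k,\chi)/s(N,k)-1/n(N,k)}<\varepsilon$ rather than via ratios of two characters, and your $O_k$ can in fact be taken uniform in $k$ since $c_3,c_4,c_0$ are bounded.
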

\addtocounter{theorem}{-1}
}

We note here precisely what ``asymptotic equidistribution by character" means.
Consider $N \geq 1$, $k \geq 2$, and $\chi$ a Dirichlet character modulo $N$ with $\chi(-1) = (-1)^k$.
Let $s(N,k,\chi)$ denote $\dim S_k(\Gamma_0(N),\chi)$, $s(N,k)$ denote $\dim S_k(\Gamma_1(N)) = \sum_{\chi} \dim S_k(\Gamma_0(N),\chi)$, and $n(N,k)$ denote the number of $\chi$ in this sum. Further restrict to $N,k$ where $s(N,k) \neq 0$. Then Theorem \ref{thm:fullspace-char-equid} is saying that for all $\varepsilon > 0$, 
\begin{align}
    \lrabs{\frac{s(N,k,\chi)}{s(N,k)} - \frac{1}{n(N,k)}} < \varepsilon \qquad  \text{for all } k,\chi,
\end{align}
for sufficiently large $N$.

One might guess that a similar equidistribution property holds for the newspace $S_k^\nw(\Gamma_1(N)) = \bigoplus_{\chi} S_k^\nw(\Gamma_0(N),\chi)$. 
However, this is wildly untrue; recall from Proposition \ref{prop:infinite-family-trivial-newspace} that $S_k^\nw(\Gamma_0(N),\chi)$ is trivial for any characters $\chi$ with $2 \mid f$ and $2 \midmid N/f$. Even for characters not in this infinite family, one can see from Theorem \ref{thm:explicit-dim-formula} that $\dim S_k^\nw(\Gamma_0(N),\chi)$ depends heavily on the conductor $f = \condf(\chi)$.

However, since the main term $\psi(f) \cdot \beta {*} \psi_f(N/f)$ of Theorem \ref{thm:explicit-dim-formula} depends only on the conductor of $\chi$ (i.e. not on $\chi$ itself), we still have an equidistribution property if we only consider characters of a given conductor. We have from \eqref{eqn:expdim-terms-asymptotics} that
\begin{equation} 
\begin{aligned}
    \psi(f) \cdot \psi_f(N/f) &= \Omega(N^{1-\varepsilon})  \qquad\qquad
    & \rho(f) \cdot \beta{*}\rho_f(N/f) &= O(N^\varepsilon) \\
    \rho'(f) \cdot \rho_f'(N/f) &= O(N^\varepsilon) 
    & 2^{\omega(f)} \cdot \sigma_f(N/f) &= O(\sqrt{N})
\end{aligned}
\end{equation}
for any $\varepsilon > 0$.
The following result then follows immediately from Theorem \ref{thm:explicit-dim-formula} (with the second claim using the facts that $c_3(k)=c_4(k)=0$ when $k \equiv1\mod12$, and that $2^{\omega(f)} \cdot \sigma_f(N/f)$ depends only on the conductor of $\chi$).

{
\renewcommand{\thetheorem}{\ref{thm:newspace-char-equid}}
\begin{theorem} 
    Consider $N \geq 1$, $k \geq 2$, and $\chi$ a Dirichlet character modulo $N$ with $\chi(-1) = (-1)^k$.
    Recall that $S_k^\nw(\Gamma_1(N)) = \bigoplus_{\chi} S_k^\nw(\Gamma_0(N),\chi)$.
    Then asymptotically in $N$, the terms in this direct sum corresponding to characters of a given conductor are equidistributed by character. 
    Furthermore, if $k \equiv 1 \mod 12$, then for all $N$, the terms corresponding to characters of a given conductor are exactly equidistributed by character. 
\end{theorem}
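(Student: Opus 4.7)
The proof will follow by inspecting each term in the explicit dimension formula from Theorem \ref{thm:explicit-dim-formula} and isolating its dependence on $\chi$. Reading through Formulas \ref{form:beta-convolute-psif}--\ref{form:beta-conv-rhopm}, the factors $\psi(f)$, $\beta{*}\psi_f(N/f)$, $\beta{*}\rho_f(N/f)$, $\beta{*}\rho_f'(N/f)$, $2^{\omega(f)}$, $\beta{*}\sigma_f(N/f)$, and $\mu(N/f)$ all depend only on the prime factorizations of $N$ and $f$; the only genuinely $\chi$-dependent quantities appearing are $\rho(f)$ (through the values $\chi_{p^\alpha}(\tfrac{-1+u}{2})$), $\rho'(f)$ (similarly through $\chi_{p^\alpha}(u')$), and $c_0(k,\chi)$.

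For the asymptotic claim, the plan is to show that $\dim S_k^\nw(\Gamma_0(N),\chi_1) - \dim S_k^\nw(\Gamma_0(N),\chi_2)$ is negligible compared to the common main term whenever $\chi_1,\chi_2$ share the same conductor $f$. By the estimates in \eqref{eqn:expdim-terms-asymptotics}, the $\chi$-independent main term $\tfrac{k-1}{12}\psi(f)\beta{*}\psi_f(N/f)$ has size $\Omega(N^{1-\varepsilon})$ outside the exceptional case $2 \mid f$ and $2 \midmid N/f$; in that exceptional case, Proposition \ref{prop:infinite-family-trivial-newspace} shows the entire newspace is trivial, so equidistribution is automatic. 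The $\chi$-dependent perturbations are controlled using $\lrabs{\rho(f)},\,\lrabs{\rho'(f)} \leq 2^{\omega(f)} = O(N^\varepsilon)$ together with $\beta{*}\rho_f(N/f),\,\beta{*}\rho_f'(N/f) = O(1)$ and $c_0(k,\chi)\mu(N/f) = O(1)$, yielding a total discrepancy $O(N^\varepsilon)$ between any two such characters. Dividing by the main term gives a deviation of $O(N^{-1+2\varepsilon})$ from the common value, proving asymptotic equidistribution in the sense made precise after Theorem \ref{thm:fullspace-char-equid}.

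For the exact statement when $k \equiv 1 \pmod{12}$, write $k = 12m+1$ (so $k \geq 13$ since $k \geq 2$). Then a direct computation gives $c_3(k) = \tfrac{k-1}{3} - \lrfloor{k/3} = 4m - 4m = 0$ and $c_4(k) = \tfrac{k-1}{4} - \lrfloor{k/4} = 3m - 3m = 0$, while $c_0(k,\chi) = 0$ since $k \neq 2$. Therefore the second, third, and fifth terms of the dimension formula vanish identically, leaving only $\tfrac{k-1}{12}\psi(f)\beta{*}\psi_f(N/f) - \tfrac{1}{2}\cdot 2^{\omega(f)} \cdot \beta{*}\sigma_f(N/f)$, both of which depend only on $N$, $k$, and $f$. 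Hence every character of a given conductor yields exactly the same newspace dimension, for every $N$. The only delicate point throughout is handling the exceptional family $2 \mid f$, $2 \midmid N/f$, and this is absorbed by the convention (inherited from the full-space setup) of restricting the equidistribution statement to those triples where the relevant direct-sum component is nontrivial.
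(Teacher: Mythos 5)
Your proposal is correct and follows essentially the same route as the paper: the main term $\tfrac{k-1}{12}\psi(f)\cdot\beta{*}\psi_f(N/f)$ depends only on the conductor and dominates the $\chi$-dependent terms asymptotically, and for $k\equiv 1 \bmod 12$ one has $c_3(k)=c_4(k)=c_0(k,\chi)=0$ so every surviving term depends only on $(N,k,f)$. One minor imprecision: $\beta{*}\rho_f(N/f)$ and $\beta{*}\rho_f'(N/f)$ are not $O(1)$ but only $O(2^{\omega(N/f)})=O(N^{\varepsilon})$, since they are multiplicative with prime-power values bounded by $2$ in absolute value; this does not affect your conclusion that the $\chi$-dependent discrepancy is $O(N^{\varepsilon})$.
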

\addtocounter{theorem}{-1}
}

More precisely, let $s'(N,k,\chi)$ denote $\dim S_k^\nw(\Gamma_0(N),\chi)$, $n_f'(N,k)$ denote the number of $\chi$  with conductor $f$, and $s_f'(N,k)$ denote the sum of $\dim S_k^\nw(\Gamma_0(N),\chi)$ over the $n_f'(N,k)$ characters $\chi$ of conductor $f$. Further restrict to $N,k,\chi$ where $s_{f}'(N,k) \neq 0$. Then Theorem \ref{thm:newspace-char-equid} is saying that for all $\varepsilon > 0$, 
    \begin{align}
        \lrabs{\frac{s'(N,k,\chi)}{s_{f}'(N,k)} - \frac{1}{n_{f}'(N,k)}} < \varepsilon \qquad  \text{for all } k,\chi,
    \end{align}
    for sufficiently large $N$.
    Furthermore,  
    \begin{align}
        \frac{s'(N,k,\chi)}{s_{f}'(N,k)} = \frac{1}{n_{f}'(N,k)} \qquad  \text{for all } k,\chi \text{ where } k \equiv 1 \!\!\!\!\mod 12,
    \end{align}
    for all $N$.

\section{A decomposition of certain \texorpdfstring{$S_k(\Gamma_0(N),\chi)$}{S\_k(Gamma0(N),chi)}} \label{sec:decomposition}

In Proposition \ref{prop:infinite-family-trivial-newspace}, we showed that if $2 \mid f$ and $2 \midmid N/f$, then $S_k^\nw(\Gamma_0(N),\chi)$ is trivial. However, one might ask for an intuitive reason why this is the case. In particular, since all of $S_k(\Gamma_0(N),\chi)$ is old, one could ask where each of the forms in $S_k(\Gamma_0(N),\chi)$ is coming from. By Atkin-Lehner-Li theory (e.g. \cite[Corollary 13.3.6]{cohen-stromberg}), we have that $S_k(\Gamma_0(N/2),\chi) \oplus S_k(\Gamma_0(N/2),\chi) |_k \pttmatrix{2}{0}{0}{1}  \subseteq S_k(\Gamma_0(N),\chi)$. And one can verify directly by Formula \ref{form:fullspace-dim} that in this case, we have $\dim S_k(\Gamma_0(N),\chi) = 2 \cdot \dim S_k(\Gamma_0(N/2),\chi)$. This means that in fact $S_k(\Gamma_0(N),\chi) = S_k(\Gamma_0(N/2),\chi) \oplus S_k(\Gamma_0(N/2),\chi) |_k \pttmatrix{2}{0}{0}{1}$. This yields the following result.
\begin{proposition} \label{prop:intuition}
    Let $N \geq 1$, $k \geq 2$, and $\chi$ be a Dirichlet character modulo $N$ of conductor $f=\condf(\chi)$ such that $\chi(-1) = (-1)^k$. Also assume that $2 \mid f$ and $2 \midmid N/f$. Then each $g \in S_k(\Gamma_0(N),\chi)$ can be decomposed uniquely as $g = g_1 + g_2 |_k \pttmatrix{2}{0}{0}{1}$ where $g_1,g_2\in S_k(\Gamma_0(N/2),\chi)$.
\end{proposition}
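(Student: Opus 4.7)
The plan is to establish the inclusion
$$S_k(\Gamma_0(M),\chi) \;\oplus\; S_k(\Gamma_0(M),\chi)\bigl|_k \pttmatrix{2}{0}{0}{1} \;\subseteq\; S_k(\Gamma_0(N),\chi),$$
where $M := N/2$, and then to verify that both sides have the same dimension. Containment together with a dimension match forces equality of the spaces, which gives existence of the decomposition, while the directness of the left-hand sum supplies uniqueness.

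For the containment and directness, I would appeal to Atkin--Lehner--Li theory (the same reference \cite[Corollary 13.3.6]{cohen-stromberg} used in the paragraph preceding the proposition). Since $2 \mid f$ forces $4 \mid f$ (as observed in the proof of Proposition \ref{prop:infinite-family-trivial-newspace}) and $2 \midmid N/f$, we have $f \mid M$, so $\chi$ is well-defined as a character modulo $M$. The two degeneracy maps $B_1, B_2 : S_k(\Gamma_0(M),\chi) \to S_k(\Gamma_0(N),\chi)$ --- the inclusion and the scaling $g \mapsto g|_k \pttmatrix{2}{0}{0}{1}$ --- then both land inside $S_k(\Gamma_0(N),\chi)$ and have linearly independent images, producing the claimed direct sum.

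For the dimension equality, I would compare $\dim S_k(\Gamma_0(N),\chi)$ and $\dim S_k(\Gamma_0(M),\chi)$ term-by-term in Formula \ref{form:fullspace-dim}. Setting $\alpha := v_2(f)$, we have $\alpha \geq 2$, so the only thing that changes when passing from $M$ to $N$ is the $p=2$ contribution, and in each term it scales by exactly $2$: explicitly, $\psi(2^{\alpha+1})/\psi(2^{\alpha}) = 2$; the $\rho$-sum and the $\rho'$-sum both vanish at levels $M$ and $N$ by Lemma \ref{lem:congruence-solns}, since $v_2(M), v_2(N) \geq 2$; the ratio $\sigma(2^{\alpha+1})/\sigma(2^{\alpha}) = 4/2 = 2$ by the first case of \eqref{eqn:expdim-sigma} (both $\alpha$ and $\alpha+1$ lie in $[\alpha, 2\alpha)$, using $\alpha \geq 2$); and $c_0(k,\chi) = 0$ on both sides since $\chi$ is nontrivial. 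Adding up, every surviving term exactly doubles, yielding $\dim S_k(\Gamma_0(N),\chi) = 2 \dim S_k(\Gamma_0(M),\chi)$.

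Combining the two steps, the direct sum on the left has dimension equal to $\dim S_k(\Gamma_0(N),\chi)$ and sits inside that space, so the two coincide and every $g \in S_k(\Gamma_0(N),\chi)$ admits the desired unique decomposition. The main obstacle is really careful bookkeeping at $p = 2$: one must confirm that $\alpha \geq 2$, identify the correct branches of Formula \ref{form:fullspace-dim} and of \eqref{eqn:expdim-sigma} for both $M$ and $N$, and check that the contribution from the sum $\sum_{d \mid N,\, (d,N/d) \mid N/f} \phi(\gcd(d,N/d))$ really does double --- the $\sigma$ case is the only one where this doubling is not immediately obvious.
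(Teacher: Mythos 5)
Your proposal is correct and takes essentially the same route as the paper: containment of the direct sum inside $S_k(\Gamma_0(N),\chi)$ via Atkin--Lehner--Li theory, combined with the dimension identity $\dim S_k(\Gamma_0(N),\chi) = 2\dim S_k(\Gamma_0(N/2),\chi)$ obtained term-by-term from Formula \ref{form:fullspace-dim}. Your local bookkeeping at $p=2$ (using $\alpha = v_2(f) \geq 2$, the vanishing of the $\rho$- and $\rho'$-sums via Lemma \ref{lem:congruence-solns}, the doubling $\sigma(2^{\alpha+1})/\sigma(2^{\alpha}) = 2$, and $c_0(k,\chi)=0$) is accurate and simply makes explicit the verification the paper leaves to the reader.
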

This proposition provides some additional understanding as to why $S_k^\nw(\Gamma_0(N),\chi)$ is trivial when $2 \mid f$ and $2 \midmid N/f$. However, one might still ask if there is a nice way to find the decomposition promised in Proposition \ref{prop:intuition}. Of course, for specific $N$, $k$, and $\chi$, one can calculate bases for each of the spaces involved and find the decomposition computationally. However, we surmise that there might be a more natural way to find this decomposition.


\section{Appendix} \label{sec:appendix}

Throughout this appendix, let $N \geq 1$, $k \geq 2$, and $\chi$ be a Dirichlet character modulo $N$ with conductor $f$ such that $\chi(-1) = (-1)^k$. 

First, we give a proof of the claimed formulas for $\sigma$, $\sigma_f$, and $\beta{*}\sigma_f$.

{
\renewcommand{\theformula}{\ref{form:beta-conv-sigma}}

\begin{formula} 
    For $p$ prime, let $\alpha := v_p(f)$.
    Then assuming $r \geq \alpha$, $\sigma$ is the multiplicative function defined for $r \geq 1$ by
    \begin{align}
        \sigma(p^r) &= \begin{cases}
            2 p^{r-\alpha} \qquad\qquad\ \ \ &\text{ if } \alpha \leq r < 2\alpha \\
            2 p^{(r-1)/2}  &\text{ if } r \geq 2\alpha, \ r \text{ odd} \\
            \lrp{1+\frac1p} p^{r/2} &\text{ if } r \geq 2\alpha, \ r \text{ even}. 
        \end{cases}  \label{eqn:app-sigma-computation} 
    \end{align}
    Furthermore, $\sigma_f$ and $\beta {*} \sigma_f$ are the multiplicative functions defined for $r \geq 1$ by
    \begin{align}
        \sigma_f(p^r) &=  \begin{cases}
            2 p^{(r-1)/2}  & \text{if $r$ is odd} \\
            \lrp{1+\frac1p}p^{r/2} \qquad\ \ & \text{if $r$ is even} 
        \end{cases} \, \qquad\qquad\qquad\qquad   \text{for } \alpha=0 \label{eqn:app-sigmaf-a-0} \\
        \sigma_f(p^r) &= \begin{cases}
            p^r & \text{if } r<\alpha \\
            p^{(r+\alpha-1)/2} & \text{if } r \geq \alpha,\ r+\alpha \text{ odd} \\
            \frac12 \lrp{1+\frac1p} p^{(r+\alpha)/2} &\text{if } r \geq \alpha,\ r+\alpha \text{ even} 
        \end{cases} \qquad\qquad\ \! \text{for } \alpha\geq 1  \label{eqn:app-sigmaf-a-ge1} \\
        \beta {*} \sigma_f(p^r) &=
        \begin{cases}
            0 & \text{ if $r$ is odd} \\
            p-2 & \text{ if $r=2$} \\
            \lrp{1 - \frac2p + \frac{1}{p^2}} p^{r/2} & \text{ if $r \geq 4$ even}
        \end{cases} 
        \qquad\qquad\qquad \ \ \ \ 
        \text{for } \alpha = 0   \\
        \beta {*} \sigma_f(p) &= 
        \begin{cases}
            \frac12\lrp{p-3} \qquad\qquad\qquad & \text{if } \alpha=1 \\
            p-2 & \text{if } \alpha \geq 2 
        \end{cases} 
        \qquad\qquad\qquad\qquad\qquad\!\! \text{for } \alpha \geq 1   \\
        \beta {*} \sigma_f(p^r) &=
        \begin{cases}
            0 & \text{if } r\geq\alpha+1,\, r+\alpha \text{ odd} \\ 
            \frac12 \lrp{1-\frac2p+\frac{1}{p^2}} p^{(r+\alpha)/2} & \text{if } r \geq \alpha+2,\, r+\alpha \text{ even} \\
            \frac12 \lrp{1-\frac3p+\frac{2}{p^2}} p^{r} & \text{if } r=\alpha  \\
            \lrp{1-\frac2p+\frac{1}{p^2}} p^r & \text{if } r < \alpha.
        \end{cases}
        \ \  \text{for } \alpha \geq 1,\, r \geq 2 
    \end{align}
\end{formula}

\addtocounter{formula}{-1}
}

\begin{proof}
    First, if $\alpha \leq r < 2\alpha$, this implies that we have $r - \alpha < r/2$. So by \eqref{eqn:sigma-def}, this gives
    \begin{align}
        \sigma(p^r) 
        &= \sum_{
                \substack{d \mid p^r \\   \gcd(d,p^r/d) \mid p^{r-\alpha} }
            }\mspace{-20mu} \phi(\gcd(d, p^r / d)) \\
        &= \sum_{
            \substack{0 \leq s \leq r \\  \min(s,r-s)  \leq r-\alpha}
        }\mspace{-20mu} \phi(p^{\min(s,r-s)}) \\
        &= 2 \sum_{
            0 \leq s \leq r-\alpha
        }\mspace{-5mu} \phi(p^s) \\
        &= 2 p^{r-\alpha}, 
    \end{align}
    as claimed. Here, we used the well-known identity $\sum_{d \mid N} \phi(d) = N$.
 
    Second, if $r \geq 2\alpha$ with $r$ odd, then
    \begin{align}
        \sigma(p^r) 
        &= \sum_{
            \substack{0 \leq s \leq r \\  \min(s,r-s)  \leq r-\alpha}
        }\mspace{-20mu} \phi(p^{\min(s,r-s)}) \\
        &= \sum_{
            0 \leq s \leq r 
        } \phi(p^{\min(s,r-s)}) \\
        &= 2 \sum_{
            0 \leq s \leq (r-1)/2
        }\mspace{-5mu} \phi(p^s) \\
        &= 2 p^{(r-1)/2}, 
    \end{align}
    as claimed.

    Third, if $r \geq 2\alpha$ with $r$ even, then
    \begin{align}
        \sigma(p^r) 
        &= \sum_{
            \substack{0 \leq s \leq r \\  \min(s,r-s)  \leq r-\alpha}
        }\mspace{-20mu} \phi(p^{\min(s,r-s)}) \\
        &= \sum_{
            0 \leq s \leq r 
        } \phi(p^{\min(s,r-s)}) \\
        &= 2 \sum_{
            0 \leq s \leq r/2
        }\mspace{-5mu} \phi(p^s)  \ - \phi(p^{r/2})\\
        &= 2 p^{r/2} - \lrp{1 - \frac1p}p^{r/2} \\
        &= \lrp{1 + \frac1p}p^{r/2}.
    \end{align}

    Fourth and fifth, if $\alpha = 0$, then by \eqref{eqn:theta-sub-f}, we have $\sigma_f(p^r) = \sigma(p^r)$. So from \eqref{eqn:app-sigma-computation}, 
    $\sigma_f(p^r) = 2 p^{(r-1)/2}$ for $r$ odd, and 
    $\sigma_f(p^r) = \lrp{1+\frac1p} p^{r/2}$ for $r$ even.
    
    Sixth, seventh, and eighth, if $\alpha \geq 1$, then by \eqref{eqn:theta-sub-f} and \eqref{eqn:app-sigma-computation}, we have 
    \begin{align}
        \sigma_f(p^r) &= \frac{\sigma(p^{r+\alpha})}{\sigma(p^\alpha)} = \frac12 \sigma(p^{r+\alpha}) 
        = \begin{cases}
            p^r & \text{if } r<\alpha \\
            p^{(r+\alpha-1)/2} & \text{if } r \geq \alpha,\ r \text{ odd} \\
            \frac12 \lrp{1+\frac1p} p^{(r+\alpha)/2} &\text{if } r \geq \alpha,\ r \text{ even}.
        \end{cases} 
    \end{align}

    Ninth, tenth, and eleventh, if $\alpha = 0$, then by \eqref{eqn:app-sigmaf-a-0}, 
    \begin{align}
        \beta{*}\sigma_f(p) &= \sigma_f(p) - 2 = 0 \\
        \beta{*}\sigma_f(p^r) &= \sigma_f(p^r) - 2 \sigma_f(p^{r-1}) + \sigma_f(p^{r-2}) \\
        &= 2p^{(r-1)/2} - 2\lrp{1+\frac1p} p^{(r-1)/2} + 2p^{(r-3)/2} \\
        &= 0 
        \qquad\qquad\qquad\qquad\qquad \qquad\qquad
        \text{for } r \geq 3 \text{ odd} \\
        \beta{*}\sigma_f(p^2) &= \sigma_f(p^2) - 2 \sigma_f(p) + 1 = \lrp{p+1} - 2 \cdot 2 + 1 = p-2 \\
        \beta{*}\sigma_f(p^r) &= \sigma_f(p^r) - 2 \sigma_f(p^{r-1}) + \sigma_f(p^{r-2}) \\
        &= \lrp{1+\frac1p} p^{r/2} - 2\cdot 2p^{(r-2)/2} + \lrp{1+\frac1p} p^{(r-2)/2} \\
        &= \lrp{1-\frac{2}{p} + \frac{1}{p^2} } p^{r/2} 
        \qquad\qquad\qquad \ \ \ 
        \text{for } r \geq 4 \text{ even}.
    \end{align} 

    Twelfth and thirteenth, if $\alpha \geq 1$, then by \eqref{eqn:app-sigmaf-a-ge1}, 
    \begin{align}
        \beta {*} \sigma_f(p) &= \sigma_f(p) - 2 = \begin{cases}
            \frac12 \lrp{p - 3}  & \text{if } \alpha=1 \\
            p - 2  & \text{if } \alpha \geq 2.
        \end{cases}
    \end{align}

    Finally, fourteenth through seventeenth, assume $\alpha \geq 1$ and $r \geq 2$. 
    
    Fourteenth, if $r \geq \alpha+1$ with $r+\alpha$ odd, then observe that in every case of \eqref{eqn:app-sigmaf-a-ge1}, $\sigma_f(p^{r-2}) = p^{(r+\alpha-3)/2}$. Thus by \eqref{eqn:app-sigmaf-a-ge1},
    \begin{align}
        \beta {*} \sigma_f(p^r) &= \sigma_f(p^r) - 2 \sigma_f(p^{r-1}) + \sigma_f(p^{r-2}) \\
        &= p^{(r+\alpha-1)/2} - 2 \cdot \frac12 \lrp{1+\frac1p} p^{(r+\alpha-1)/2} + p^{(r+\alpha-3)/2} \\
        &= 0.
    \end{align}
    
    Fifteenth, if $r \geq \alpha+2$ with $r+\alpha$ even, then by \eqref{eqn:app-sigmaf-a-ge1},
    \begin{align}
        \beta {*} \sigma_f(p^r) &= \sigma_f(p^r) - 2 \sigma_f(p^{r-1}) + \sigma_f(p^{r-2}) \\
        &= \frac12 \lrp{1+\frac1p} p^{(r+\alpha)/2} - 2 \cdot p^{(r+\alpha-2)/2} + \frac12 \lrp{1+\frac1p} p^{r+\alpha-2} \\
        &= \frac12 \lrp{1 - \frac2p + \frac{1}{p^2}} p^{(r+\alpha)/2}.
    \end{align}

    Sixteenth, if $r = \alpha$, then by \eqref{eqn:app-sigmaf-a-ge1},
    \begin{align}
        \beta {*} \sigma_f(p^r) &= \sigma_f(p^r) - 2 \sigma_f(p^{r-1}) + \sigma_f(p^{r-2}) \\
        &= \frac12 \lrp{1+\frac1p} p^r - 2 p^{r-1} + p^{r-2} \\
        &= \frac12 \lrp{1 - \frac3p + \frac{2}{p^2}} p^r.
    \end{align}
    Seventeenth, if $r < \alpha$, then by \eqref{eqn:app-sigmaf-a-ge1}, 
    \begin{align}
        \beta {*} \sigma_f(p^r) &= \sigma_f(p^r) - 2 \sigma_f(p^{r-1}) + \sigma_f(p^{r-2}) \\
        &= p^r - 2 p^{r-1} + p^{r-2} \\
        &=  \lrp{1 - \frac2p + \frac{1}{p^2} } p^r,
    \end{align}
    completing the proof.
\end{proof}

Next, we give a proof of the claimed formulas for $\rho$, $\rho_f$, and $\beta {*} \rho_f$.

{
\renewcommand{\theformula}{\ref{form:beta-conv-rho}}
\begin{formula}
    For $p$ prime, let $\alpha := v_p(f)$, and let $\chi_{p^\alpha}$ be defined as in Lemma \ref{lem:dim-linear-comb-mult}. If $\lrp{\frac{-3}{p}} = 1$, then let $u$ denote a square root of $-3$ modulo $p^\alpha$. Then assuming $r \geq \alpha$, $\rho$ is the multiplicative function defined for $r \geq 1$ by
    \begin{align}
        \rho(p^r) &= 
        \begin{cases}
            0  & \text{if } p=2 \\
            1  & \text{if } p=3;\ r=1 \\
            0  & \text{if } p=3;\ r\geq2 \\
            0  & \text{if }  p\neq2,3;\  \lrp{\frac{-3}{p}} = -1 \\
            2  & \text{if } p\neq2,3;\  \lrp{\frac{-3}{p}} = 1;\  \chi_{p^\alpha}\lrp{\frac{-1+u}{2}} = 1 \\
            -1  & \text{if } p\neq2,3;\  \lrp{\frac{-3}{p}} = 1;\  \chi_{p^\alpha}\lrp{\frac{-1+u}{2}} = \frac{-1 \pm \sqrt{-3}}{2}. \\
        \end{cases} \label{eqn:app-rho-computation}
    \end{align}
    Furthermore, if $\rho(f) \neq 0$, then $\rho_f$ and $\beta {*} \rho_f$ are the multiplicative functions defined for $r \geq 1$ by
    \begin{align}
        \rho_f(p^r) &= \begin{cases}
            1 & \text{ if } p=3;\ r=1;\ \alpha=0 \\
            1 & \text{ if }p\neq3;\ \alpha\geq1 \\
            2 & \text{ if } p\neq2,3;\ \lrp{\frac{-3}{p}}=1;\ \alpha=0 \\
            0 & \text{ otherwise}
        \end{cases}  \label{eqn:app-rhof-computation} \\
        \beta {*} \rho_f(p) &= \begin{cases}
            -1 & \text{ if } p=3;\ \alpha=0 \\
            -1 & \text{ if }  p\neq3;\ \alpha\geq1 \\
            0  & \text{ if }  p\neq2,3;\ \lrp{\frac{-3}{p}}=1;\ \alpha=0 \\
            -2 & \text{ otherwise}
        \end{cases}  \\
        \beta {*} \rho_f(p^2) &= \begin{cases}
            -1 & \text{ if } p=3;\ \alpha=0 \\
            0 & \text{ if } p\neq3;\ \alpha\geq1 \\
            -1 & \text{ if } p\neq2,3;\ \lrp{\frac{-3}{p}}=1;\ \alpha=0 \\
            1 & \text{ otherwise}
        \end{cases} \\
        \beta {*} \rho_f(p^r) &= \begin{cases}
            1 & \text{ if }  p=3;\ r=3;\ \alpha=0 \\
            0 & \text{ if }  r\geq3, \text{ otherwise}. 
        \end{cases} 
    \end{align}
\end{formula}
\addtocounter{formula}{-1}
}

\begin{proof}
    Recall that for all $r \geq \alpha$,
    \begin{align} \label{eqn:app-rho-formula-temp}
        \rho(p^r) = \sum_{
                \substack{x \mod p^r \\  x^2+x+1\equiv0 \mod p^r}
            }\mspace{-35mu} \chi_{p^\alpha}(x).
    \end{align}
 
    The first, second, and third claims follow immediately from applying  Lemma \ref{lem:congruence-solns} to \eqref{eqn:app-rho-formula-temp}.

    For the fourth, fifth, and sixth claims, assume  $p \neq 2,3$. Then note that since $p$ divides neither $2$, nor the discriminant $D=-3$ of $x^2+x+1$, we can use the quadratic formula here. If $\lrp{\frac{-3}{p}} = -1$, then a square root of $-3$ does not exist modulo $p^r$, and hence  $x^2+x+1 \equiv 0 \mod p^r$ has no solutions. So in this case $\rho(p^r) = 0$, verifying the fourth claim.  If $\lrp{\frac{-3}{p}} = 1$, then a square root $\hat{u}$ of $-3$ does exists modulo $p^r$, and  the two solutions of $x^2+x+1 \equiv 0 \mod p^r$ are $x \equiv (-1 \pm \hat{u})/2 \mod p^r$. Thus
    \begin{align}
        \rho(p^r) = \chi_{p^\alpha}\lrp{\frac{-1 + \hat{u}}{2}} + \chi_{p^\alpha}\lrp{\frac{-1 - \hat{u}}{2}} =
        \chi_{p^\alpha}\lrp{\frac{-1 + u}{2}} + \chi_{p^\alpha}\lrp{\frac{-1 - u}{2}}.
    \end{align}
    Here, the second equality comes from reducing modulo $p^\alpha$, and recalling that $u$ denotes a square root of $-3$ modulo $p^\alpha$.
    Now, observe that 
    \begin{align}
        \chi_{p^\alpha}\lrp{\frac{-1 \pm u}{2}}^3 &= \chi_{p^\alpha}\lrp{\frac{-1 \pm 3u - 3u^2 \pm u^3}{8}} = \chi_{p^\alpha}\lrp{\frac{-1 \pm 3u + 9 \mp 3u}{8}} = \chi_{p^\alpha}\lrp{1} = 1,
    \end{align}
    which means that $\chi_{p^\alpha}\lrp{\frac{-1 \pm u}{2}}$ are third roots of unity. Additionally,
    \begin{align}
        \chi_{p^\alpha}\lrp{\frac{-1 + u}{2}} \cdot \chi_{p^\alpha}\lrp{\frac{-1 - u}{2}} = \chi_{p^\alpha}\lrp{\frac{1 - u^2}{4}} = \chi_{p^\alpha}\lrp{\frac{4}{4}} = 1,
    \end{align}
    which means that $\chi_{p^\alpha}\lrp{\frac{-1 \pm u}{2}}$ are complex conjugates. The only way for this to happen is to have
    \begin{align}
        \chi_{p^\alpha}\lrp{\frac{-1 \pm u}{2}} = 1
         \qquad \text{or} \qquad 
         \chi_{p^\alpha}\lrp{\frac{-1 \pm u}{2}} = \frac{-1 \pm \sqrt{-3}}{2}.
    \end{align}
    This then yields
    \begin{align}
        \rho(p^r) &=
        \chi_{p^\alpha}\lrp{\frac{-1 + u}{2}} + \chi_{p^\alpha}\lrp{\frac{-1 - u}{2}} = \begin{cases}
            2 & \text{if }  \chi_{p^\alpha}\lrp{\frac{-1 + u}{2}} = 1 \\
            -1 & \text{if }  \chi_{p^\alpha}\lrp{\frac{-1 + u}{2}} = \frac{-1 \pm \sqrt{-3}}{2},
        \end{cases}
    \end{align}
    verifying the fifth and sixth claims.

    For the seventh through tenth claims, we have from \eqref{eqn:theta-sub-f} and \eqref{eqn:app-rho-computation} that 
    \begin{align}
        \rho_f(3^r) &= \frac{\rho(3^{r+\alpha})}{\rho(3^\alpha)} = \begin{cases}
            1 & \text{if } r=1,\,\alpha=0 \\
            0 & \text{if } r \geq 2 \text{ or } \alpha \geq 1.
        \end{cases}
    \end{align}
    Otherwise, assume $p \neq 3$. Note in each case of \eqref{eqn:app-rho-computation}, assuming $r\geq1$, the value of $\rho(p^r)$ does not depend on $r$. Thus for $\alpha \geq 1$, we have by \eqref{eqn:theta-sub-f} that $\rho_f(p^r) = \rho(p^{r+\alpha}) / \rho(p^{\alpha}) = 1$.
    For $\alpha=0$, note that $\chi_{p^\alpha}$ is the trivial character, so by \eqref{eqn:theta-sub-f} and \eqref{eqn:app-rho-computation},
    \begin{align}
        \rho_f(p^r) = \rho(p^r) = \begin{cases}
            2 & \text{if } p\neq2,3; \lrp{\frac{-3}{p}}=1 \\
            0 & \text{otherwise},
        \end{cases}
    \end{align}
    verifying the seventh through tenth claims.

    For the eleventh through fourteenth claims, applying each of the cases from \eqref{eqn:app-rhof-computation} to Lemma \ref{lem:beta-convolution} yields
    \begin{align}
        \beta {*} \rho_f(p) &= \rho_f(p) - 2 = \begin{cases}
            -1 & \text{ if } p=3;\ \alpha=0 \\
            -1 & \text{ if }  p\neq3;\ \alpha\geq1 \\
            0  & \text{ if }  p\neq2,3;\ \lrp{\frac{-3}{p}}=1;\ \alpha=0 \\
            -2 & \text{ otherwise},
        \end{cases}
    \end{align}
    as desired.
    
    For the fifteenth through eighteenth claims, applying each of the cases from \eqref{eqn:app-rhof-computation} to Lemma \ref{lem:beta-convolution} yields
    \begin{align}
        \beta {*} \rho_f(p^2) &= \rho_f(p^2) - 2 \rho_f(p) + 1 = 
        \begin{cases}
            -1 & \text{ if } p=3;\ \alpha=0 \\
            0 & \text{ if } p\neq3;\ \alpha\geq1 \\
            -1 & \text{ if } p\neq2,3;\ \lrp{\frac{-3}{p}}=1;\ \alpha=0 \\
            1 & \text{ otherwise},
        \end{cases}
    \end{align}
    as desired.

    For the nineteenth and twentieth claims, observe from \eqref{eqn:app-rhof-computation} that when $r\geq3$,  we have $\rho_f(p^r) = \rho_f(p^{r-1}) = \rho_f(p^{r-2})$ in every case except for $p=3,\,r=3,\,\alpha=0$. Thus
    \begin{align}
        \beta {*} \rho_f(p^r) &= \rho_f(p^r) - 2 \rho_f(p^{r-1}) + \rho_f(p^{r-2}) = \begin{cases}
            1 & \text{if } p=3;\ r=3;\ \alpha=0 \\
            0 & \text{if $r\geq3$, otherwise},
        \end{cases}
    \end{align}
    completing the proof.
\end{proof}

Finally, we give a proof of the claimed formulas for $\rho'$, $\rho_f'$, and $\beta {*} \rho_f'$.

{
\renewcommand{\theformula}{\ref{form:beta-conv-rhopm}}
\begin{formula} 
    For $p$ prime, let $\alpha := v_p(f)$, and let $\chi_{p^\alpha}$ be defined as in Lemma \ref{lem:dim-linear-comb-mult}. If $\lrp{\frac{-1}{p}} = 1$, then let $u'$ denote a square root of $-1$ modulo $p^\alpha$. Then assuming $r \geq \alpha$, $\rho'$ is the multiplicative function defined for $r \geq 1$ by
    \begin{align}
        \rho'(p^r) &= 
        \begin{cases}
            1  & \text{if } p=2;\ r=1 \\
            0  & \text{if } p=2;\ r\geq2 \\
            0  & \text{if }  p\neq2;\  \lrp{\frac{-1}{p}} = -1 \\
            2 & \text{if } p\neq2;\  \lrp{\frac{-1}{p}} = 1;\ \chi_{p^\alpha}\lrp{u'} = 1 \\
            -2 & \text{if } p\neq2;\  \lrp{\frac{-1}{p}} = 1;\ \chi_{p^\alpha}\lrp{u'} = -1 \\
            0 & \text{if } p\neq2;\  \lrp{\frac{-1}{p}} = 1;\  \chi_{p^\alpha}\lrp{u'} = \pm i.
        \end{cases} \label{eqn:app-rhopm-computation}
    \end{align}
    Furthermore, if $\rho'(f) \neq 0$, then $\rho_f'$ and $\beta {*} \rho_f'$ are the multiplicative functions defined for $r \geq 1$ by
    \begin{align}
        \rho_f'(p^r) &= \begin{cases}
            1 & \text{ if } p=2;\ r=1;\ \alpha=0 \\
            1 & \text{ if }p\neq2;\ \alpha\geq1 \\
            2 & \text{ if } p\neq2;\ \lrp{\frac{-1}{p}}=1;\ \alpha=0 \\
            0 & \text{otherwise}
        \end{cases}  \label{eqn:app-rhopmf-computation} \\
        \beta {*} \rho_f'(p) &= \begin{cases}
            -1 & \text{ if } p=2;\ \alpha=0 \\
            -1 & \text{ if }  p\neq2;\ \alpha\geq1 \\
            0  & \text{ if }  p\neq2;\ \lrp{\frac{-1}{p}}=1;\ \alpha=0 \\
            -2 & \text{ otherwise}
        \end{cases}  \\
        \beta {*} \rho_f'(p^2) &= \begin{cases}
            -1 & \text{ if } p=2;\ \alpha=0 \\
            0 & \text{ if } p\neq2;\ \alpha\geq1 \\
            -1 & \text{ if } p\neq2;\ \lrp{\frac{-1}{p}}=1;\ \alpha=0 \\
            1 & \text{ otherwise}.
        \end{cases}  \\
        \beta {*} \rho_f'(p^r) &= \begin{cases}
            1 & \text{ if }  p=2;\ r=3;\ \alpha=0 \\
            0 & \text{ if }  r\geq3, \text{ otherwise}. 
        \end{cases} 
    \end{align}
\end{formula}
\begin{proof}
    Recall that for all $r \geq \alpha$,
    \begin{align} \label{eqn:app-rhopm-formula-temp}
        \rho'(p^r) = \sum_{
                \substack{x \mod p^r \\  x^2+1\equiv0 \mod p^r}
            }\mspace{-35mu} \chi_{p^\alpha}(x).
    \end{align}
 
    The first and second claims follow immediately from applying  Lemma \ref{lem:congruence-solns} to \eqref{eqn:app-rhopm-formula-temp}.

    For the third through sixth claims, assume $p \neq 2$. If $\lrp{\frac{-1}{p}} = -1$, then a square root of $-1$ does not exist modulo $p^r$, and hence  $x^2+1 \equiv 0 \mod p^r$ has no solutions. So in this case $\rho'(p^r) = 0$, verifying the third claim.  If $\lrp{\frac{-1}{p}} = 1$, then a square root $\hat{u}'$ of $-1$ does exist modulo $p^r$, and the two solutions of $x^2+1 \equiv 0 \mod p^r$ are $x \equiv \pm \hat{u}'  \mod p^r$. Thus
    \begin{align}
        \rho'(p^r) = \chi_{p^\alpha}\lrp{\hat{u}'} + \chi_{p^\alpha}\lrp{-\hat{u}'} = 
        \chi_{p^\alpha}\lrp{u'} + \chi_{p^\alpha}\lrp{-u'}.
    \end{align}
    Here, the second equality comes from reducing modulo $p^\alpha$, and recalling that $u'$ denotes a square root of $-1$ modulo $p^\alpha$.
    Now, observe that 
    \begin{align}
        \chi_{p^\alpha}\lrp{\pm u'}^4 &= \chi_{p^\alpha}\lrp{(\pm u')^4} = \chi_{p^\alpha}\lrp{1} = 1,
    \end{align}
    which means that $\chi_{p^\alpha}\lrp{\pm u'}$ are fourth roots of unity. Additionally,
    \begin{align}
        \chi_{p^\alpha}\lrp{u'} \cdot \chi_{p^\alpha}\lrp{-u'} = \chi_{p^\alpha}\lrp{1} = 1,
    \end{align}
    which means that $\chi_{p^\alpha}\lrp{\pm u'}$ are complex conjugates. The only way for this to happen is to have
    \begin{align}
        \chi_{p^\alpha}\lrp{\pm u'} = 1
         \qquad \text{or} \qquad \chi_{p^\alpha}\lrp{\pm u'} = -1
         \qquad \text{or} \qquad
         \chi_{p^\alpha}\lrp{\pm u'} = \pm i.
    \end{align}
    This then yields
    \begin{align}
        \rho'(p^r) &=
        \chi_{p^\alpha}\lrp{u'} + \chi_{p^\alpha}\lrp{-u'} = 
        \begin{cases}
            2 & \text{if }  \chi_{p^\alpha}\lrp{u'} = 1 \\
            -2 & \text{if }  \chi_{p^\alpha}\lrp{u'} = -1 \\
            0 & \text{if }  \chi_{p^\alpha}\lrp{u'} = \pm i,
        \end{cases}
    \end{align}
    verifying the fourth, fifth, and sixth claims.

    For the seventh through tenth claims, we have from \eqref{eqn:theta-sub-f} and \eqref{eqn:app-rhopm-computation} that 
    \begin{align}
        \rho_f'(2^r) = \frac{\rho'(2^{r+\alpha})}{\rho'(2^\alpha)} = \begin{cases}
            1 & \text{if } r=1,\,\alpha=0 \\
            0 & \text{if } r \geq 2 \text{ or } \alpha \geq 1.
        \end{cases}
    \end{align}
    Otherwise, assume $p \neq 2$. If $\lrp{\frac{-1}{p}} = -1$, then $\rho_f'(p^r) = 0$ from \eqref{eqn:app-rhopm-computation}. 
    If $\lrp{\frac{-1}{p}} = 1$ and $\alpha = 0$, note $\chi_{p^\alpha}$ is the trivial character, so by \eqref{eqn:theta-sub-f} and \eqref{eqn:app-rhopm-computation}, $\rho_f'(p^r) = \rho'(p^{r}) = 2$.
    If $\lrp{\frac{-1}{p}} = 1$ and $\alpha \geq 1$, then by \eqref{eqn:theta-sub-f} and \eqref{eqn:app-rhopm-computation}, $\rho_f'(p^r) = \rho'(p^{r+\alpha}) / \rho'(p^{\alpha}) = 1$. This verifies the seventh through tenth claims.

    For the eleventh through fourteenth claims, applying each of the cases from \eqref{eqn:app-rhopmf-computation} to Lemma \ref{lem:beta-convolution} yields
    \begin{align}
        \beta {*} \rho_f'(p) &= \rho_f(p) - 2 = \begin{cases}
            -1 & \text{ if } p=2;\ \alpha=0 \\
            -1 & \text{ if }  p\neq2;\ \alpha\geq1 \\
            0  & \text{ if }  p\neq2;\ \lrp{\frac{-1}{p}}=1;\ \alpha=0 \\
            -2 & \text{otherwise},
        \end{cases}
    \end{align}
    as desired.
    
    For the fifteenth through eighteenth claims, applying each of the cases from \eqref{eqn:app-rhopmf-computation} to Lemma \ref{lem:beta-convolution} yields
    \begin{align}
        \beta {*} \rho_f'(p^2) &= \rho_f'(p^2) - 2 \rho_f'(p) + 1 = 
        \begin{cases}
            -1 & \text{ if } p=2;\ \alpha=0 \\
            0 & \text{ if } p\neq2;\ \alpha\geq1 \\
            -1 & \text{ if } p\neq2;\ \lrp{\frac{-1}{p}}=1;\ \alpha=0 \\
            1 & \text{ otherwise},
        \end{cases}
    \end{align}
    as desired.

    For the nineteenth and twentieth claims, observe from \eqref{eqn:app-rhopmf-computation} that when $r\geq3$,  we have $\rho_f'(p^r) = \rho_f'(p^{r-1}) = \rho_f'(p^{r-2})$ in every case except for $p=2,\,r=3,\,\alpha=0$. Thus
    \begin{align}
        \beta {*} \rho_f'(p^r) &= \rho_f'(p^r) - 2 \rho_f'(p^{r-1}) + \rho_f'(p^{r-2}) = \begin{cases}
            1 & \text{ if } p=2;\ r=3;\ \alpha=0 \\
            0 & \text{ if $r\geq3$, otherwise},
        \end{cases}
    \end{align}
    completing the proof.
\end{proof}
\addtocounter{formula}{-1}
}

\section*{Acknowledgments}
We would like to thank the anonymous referee for his/her helpful comments.

\begin{flushright} S.D.G.\end{flushright}

\bibliographystyle{plain}
\bibliography{bibliography.bib}

\end{document}